\setlist[enumerate]{leftmargin=*}
\setlist[itemize]{labelindent=\parindent, leftmargin=*}
\numberwithin{equation}{section}
\theoremstyle{plain}
\newtheorem{thm}{Theorem}[section]
\newtheorem{lem}[thm]{Lemma}
\newtheorem{prop}[thm]{Proposition}
\theoremstyle{definition}
\theoremstyle{remark}
\newtheorem{rem}[thm]{Remark}
\newtheorem*{cav}{Caveat}
\newcommand\diag{\operatorname{diag}}
\newcommand\disc{\operatorname{disc}}
\newcommand\Hom{\operatorname{Hom}}
\newcommand\Ind{\operatorname{Ind}}
\newcommand\Irr{\operatorname{Irr}}
\renewcommand\Re{\operatorname{Re}}
\newcommand\Tr{\operatorname{Tr}}
\newcommand\down{\mathrm{down}}
\newcommand\GL{\mathrm{GL}}
\newcommand\Mp{\mathrm{Mp}}
\newcommand\SL{\mathrm{SL}}
\newcommand\Sp{\mathrm{Sp}}
\newcommand\U{\mathrm{U}}
\newcommand\up{\mathrm{up}}
\newcommand\A{\mathbb{A}}
\newcommand\C{\mathbb{C}}
\newcommand\E{\mathbb{E}}
\newcommand\F{\mathbb{F}}
\newcommand\Q{\mathbb{Q}}
\newcommand\R{\mathbb{R}}
\newcommand\V{\mathbb{V}}
\newcommand\W{\mathbb{W}}
\newcommand\X{\mathbb{X}}
\newcommand\Y{\mathbb{Y}}
\newcommand\Z{\mathbb{Z}}
\newcommand\g{\mathfrak{g}}
\renewcommand\k{\mathfrak{k}}
\renewcommand\l{\mathfrak{l}}
\newcommand\q{\mathfrak{q}}
\renewcommand\t{\mathfrak{t}}
\renewcommand\u{\mathfrak{u}}
\newcommand\CC{\mathcal{C}}
\newcommand\FF{\mathcal{F}}
\newcommand\VV{\mathcal{V}}
\newcommand\XX{\mathcal{X}}
\newcommand\ZZ{\mathcal{Z}}
\newcommand\HH{\mathfrak{H}}
\newcommand\1{\mathbf{1}}
\newcommand\pp{\mathbf{p}}
\newcommand\qq{\mathbf{q}}
\newcommand\rr{\mathbf{r}}
\renewcommand\ss{\mathbf{s}}
\newcommand\xx{\mathbf{x}}
\newcommand\longhookrightarrow{\lhook\joinrel\longrightarrow}
\DeclareMathOperator*{\Res}{Res}
\title{Theta lifting for discrete series representations of real unitary groups}
\author{Atsushi Ichino}
\address{Department of Mathematics, Kyoto University, Kitashirakawa Oiwake-cho, Sakyo-ku, Kyoto 606-8502, Japan}
\email{ichino@math.kyoto-u.ac.jp}
\begin{document}

\maketitle

\begin{abstract}
We study the theta lifting for real unitary groups and completely determine the theta lifts of discrete series representations.
In particular, we show that these theta lifts can be expressed as cohomologically induced representations in the weakly fair range.
This extends a result of J.-S.~Li in the case of discrete series representations with sufficiently regular infinitesimal character, whose theta lifts can be expressed as cohomologically induced representations in the good range.
\end{abstract}

\section{Introduction}

In his seminal papers \cite{howe1, howe2}, Howe introduced the notion of reductive dual pairs and developed the theory of theta lifting, which has been an important subject in the representation theory of real and $p$-adic reductive groups for more than $40$ years and which has many arithmetic applications to the theory of automorphic forms.
The theta lifting is defined as a correspondence between representations of the two groups in a reductive dual pair in terms of the restriction of the Weil representation \cite{weil1}.
In fact, it is shown that this correspondence is one-to-one by Howe himself \cite{howe2} in the real case and by Gan--Takeda \cite{gt} in the $p$-adic case, following earlier work of Howe \cite{howe1} and Waldspurger \cite{wal} for $p \ne 2$.
For the history and recent development of the theta lifting, the reader can consult the ICM report of Gan \cite{gan}.

In the theory of theta lifting, one of the basic problems is to describe it explicitly.
We consider this problem in the real case, which has been studied by M{\oe}glin \cite{moeglin89}, Li \cite{li90}, Adams--Barbasch \cite{ab1, ab2}, Paul \cite{paul1, paul2, paul3}, Li--Paul--Tan--Zhu \cite{lptz} to mention a few, but which has not been solved in general.
For example, consider the reductive dual pair $(\U(p,q),\U(r,s))$ consisting of real unitary groups.
Then Paul \cite{paul1, paul2} completely determined the theta lifts when $p+q=r+s$ or $p+q=r+s \pm 1$.
However, beyond this case, it seems to be notoriously difficult to determine the theta lifts, especially when singular representations occur.
In fact, there has been no significant progress in this direction for almost $20$ years.

In this paper, we take a first step toward determining the theta lifts for real unitary groups.
More precisely, let $\pi$ be a discrete series representation of $\U(p,q)$ and consider its theta lift $\theta_{r,s}(\pi)$ to $\U(r,s)$ when $p+q < r+s$.
Then by a result of Li \cite{li90}, we have
\[
 \theta_{r,s}(\pi) = A_\q(\lambda)
\]
if the infinitesimal character of $\pi$ is sufficiently regular, where $A_\q(\lambda)$ is an explicit cohomologically induced representation in the good range.
The purpose of this paper is to remove this assumption on the infinitesimal character of $\pi$.
Namely, our main result (Theorem \ref{t:main}) roughly says the following.

\begin{thm}
Let $\pi$ be a discrete series representation of $\U(p,q)$.
Assume that its theta lift $\theta_{r,s}(\pi)$ to $\U(r,s)$ is nonzero.
\begin{enumerate}
\item 
If $p+q < r+s$, then $\theta_{r,s}(\pi)$ is a cohomologically induced representation $A_\q(\lambda)$ of $\U(r,s)$ in the weakly fair range, where $\q$ and $\lambda$ can be described explicitly.
\item 
If $p+q \ge r+s$, then $\theta_{r,s}(\pi)$ is a discrete series representation of $\U(r,s)$, where its Harish-Chandra parameter can be described explicitly.
\end{enumerate}
\end{thm}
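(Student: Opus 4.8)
\medskip
\noindent\emph{Proof strategy.}
The plan is to reduce both parts to the ranges in which the theta lift is already understood --- Paul's computation \cite{paul1,paul2,paul3} near $p+q=r+s$, and Li's theorem \cite{li90} when the infinitesimal character of $\pi$ is sufficiently regular --- and then to propagate the answer by translation. The structural inputs are Howe's duality theorem \cite{howe2}, so that the full Howe lift $\Theta_{r,s}(\pi)$ has a unique irreducible quotient $\theta_{r,s}(\pi)$; the persistence/tower property and the conservation relation (Kudla--Rallis, Sun--Zhu), which pin down the first occurrence of $\pi$; the correspondence of infinitesimal characters for $(\U(p,q),\U(r,s))$, which is affine in the Harish-Chandra parameter of $\pi$; and the compatibility of the theta correspondence with Zuckerman translation functors, which at the level of the Harish-Chandra module of the Weil representation amounts to the fact that a translation functor depends only on a generalized infinitesimal character, one that is shared by the two commuting actions on that module.

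For part (ii), assume $p+q\ge r+s$ and $\theta_{r,s}(\pi)\neq0$. The conservation relation fixes the first-occurrence index of $\pi$ in each of the two Witt towers of unitary groups once it is known in one of them, and combining this with Paul's results near $r+s=p+q$ shows that, throughout the range $r+s\le p+q$ in which $\theta_{r,s}(\pi)\neq0$, the lift is tempered with regular infinitesimal character, hence --- being irreducible --- a discrete series. A lowest $K$-type computation via the joint harmonics inside the Weil representation, together with the correspondence of infinitesimal characters, then identifies its Harish-Chandra parameter as the predicted $\rho$-shift of that of $\pi$.

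For part (i), assume $p+q<r+s$. One first reads off the candidate: a $\theta$-stable parabolic $\q=\l\oplus\u$ of $\u(r,s)$, together with a one-dimensional $\lambda$, whose data are prescribed explicitly by $(p,q)$, $(r,s)$ and the Harish-Chandra parameter of $\pi$ via Li's formula extrapolated past the good range; one checks directly that $(\q,\lambda)$ lies in the weakly fair range. Next pick a discrete series $\pi'$ of $\U(p,q)$ whose Harish-Chandra parameter is a sufficiently regular translate of that of $\pi$, so that $\pi=\phi(\pi')$ for a translation functor $\phi$ which is an equivalence of categories (both parameters being regular), and so that the corresponding good-range parameter $\lambda'$ satisfies $A_\q(\lambda)=\psi\bigl(A_\q(\lambda')\bigr)$ for the matching translation functor $\psi$ on $\U(r,s)$ --- here using that cohomological induction commutes with translation functors. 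By Li's theorem \cite{li90} one has $\Theta_{r,s}(\pi')=A_\q(\lambda')$ in the good range, the Howe lift being irreducible there; applying the compatibility of theta with translation then gives $\Theta_{r,s}(\pi)=\psi\bigl(\Theta_{r,s}(\pi')\bigr)=\psi\bigl(A_\q(\lambda')\bigr)=A_\q(\lambda)$. Since $A_\q(\lambda)$ is unitary by Vogan's theorem on the weakly fair range, this module is semisimple, so its unique irreducible quotient $\theta_{r,s}(\pi)$ must equal all of it: $\theta_{r,s}(\pi)=A_\q(\lambda)$. The vanishing statement is carried along, and one records the known combinatorial criterion for nonvanishing of a weakly fair $A_\q(\lambda)$.

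The crux is to make the compatibility of theta with translation functors precise and complete: one must establish the exact correspondence of infinitesimal characters for the pair, and then show that the full Howe lift $\Theta_{r,s}$ intertwines the translation functors matched by that correspondence --- most cleanly by realizing the construction inside the Harish-Chandra module of the Weil representation, where translating on either group is the same operation of tensoring with a finite-dimensional module and projecting onto a generalized infinitesimal character. A secondary point is to dispose of the unfavorable signature configurations (where $r<p$ or $s<q$), for which the parabolic $\q$ and the bookkeeping in Li's formula require separate care; and, as a consistency check that also delivers the explicit $\q$, $\lambda$ and Harish-Chandra parameters demanded by the statement, one verifies that the lowest $K$-types of the representations produced agree with the joint harmonics occurring in the Fock model of the Weil representation even once the infinitesimal character has become singular. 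This last verification is where the genuinely singular --- as opposed to merely regular --- nature of the problem has to be confronted.
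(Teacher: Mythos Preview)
Your approach is fundamentally different from the paper's, and the step you yourself flag as ``the crux'' --- that the full Howe lift $\Theta_{r,s}$ intertwines matched translation functors on $\U(p,q)$ and $\U(r,s)$ --- is a genuine gap that you have not closed and that is not available in the literature. The paper does \emph{not} attempt any local translation-functor argument. Instead, for part~(i) it runs a global-to-local argument: one globalizes $\pi$ to an automorphic representation $\varPi$ over a totally real quartic field, arranging that at one real place the local component is $\pi$, at a second real place it is a discrete series $\pi_+$ with sufficiently regular infinitesimal character (so Li's theorem determines its theta lift), and at all other places the local theta lifts are computable by Atobe--Gan and M\'inguez. The global theta lift $\varSigma$ is shown to be nonzero via the Rallis inner product formula together with a new local result (the equivalence $\theta_{r,s}(\pi)\ne 0 \Leftrightarrow \ZZ_{r,s}(\pi)\ne 0$ for discrete series $\pi$, proved by a seesaw induction patterned on Atobe's nonvanishing argument). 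Then Arthur's multiplicity formula, read as a product formula $\prod_v \eta'_v(e'_{i,v})=\epsilon_\varPhi(e'_i)$ for the characters of the local component groups, pins down $\eta'_{v_0}$ from the known values at all other places; finally the M{\oe}glin--Renard description of real $A$-packets identifies the representation attached to $(\phi',\eta'_{v_0})$ as the predicted $A_\q(\lambda')$. Part~(ii) is reduced to part~(i) by computing the reverse lift $\theta_{p,q}(\sigma)$ of the candidate discrete series $\sigma$ and checking it equals $\pi$.

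The obstruction to your local route is not a technicality. The Weil representation $\omega$ is not admissible as a $\U(p,q)$-module, and there is no identity of the shape $(\omega\otimes F)_\chi \cong (\omega\otimes F')_{\chi'}$ for matched finite-dimensional $F,F'$ and generalized infinitesimal characters $\chi,\chi'$ that would let you say ``translating on either side is the same operation''; tensoring $\omega$ with a finite-dimensional $\U(p,q)$-module is simply a different $\U(r,s)$-module from $\omega\otimes F'$. Even granting some compatibility at the level of infinitesimal characters, you would still need $\Theta_{r,s}$ (a maximal-isotypic-quotient construction) to commute with translation \emph{to the wall} on the $\U(r,s)$ side, since in the weakly fair range the target infinitesimal character can be singular; such commutation with non-invertible translation functors is exactly where control is lost. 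Your consistency check via joint harmonics only verifies a lowest $K$-type, which does not distinguish $A_\q(\lambda)$ from other constituents a priori present in $\Theta_{r,s}(\pi)$. For part~(ii), your claim that the lift is ``tempered with regular infinitesimal character'' assumes temperedness in the going-down range, which is not immediate; the paper sidesteps this entirely by the reverse-lift reduction. If you could actually prove the translation-functor compatibility you invoke, your argument would be purely local and hence unconditional --- a real advantage over the paper's proof, which is conditional on Arthur's multiplicity formula for non-quasi-split unitary groups --- but as written that step is asserted, not established.
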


We have stated the result under the assumption that $\theta_{r,s}(\pi)$ is nonzero, but there is a combinatorial criterion for the nonvanishing of $\theta_{r,s}(\pi)$ due to Atobe \cite{atobe} (see \S \ref{ss:nonvanishing} below).
Based on this theorem, we hope to describe $\theta_{r,s}(\pi)$ explicitly for more general $\pi$ in future work.

This paper is organized as follows.
In \S \ref{s:local-theta}, we review the notion of local theta lifting.
In \S \ref{s:real-rep}, we recall some representations of real unitary groups.
In \S \ref{s:main}, we state the main theorem of this paper.
To explain the idea of the proof, we should note the following.

\begin{cav}
Our proof is global and relies on Arthur's endoscopic classification \cite{arthur,mok,kmsw}.
Namely, our main result is conditional on Arthur's multiplicity formula for the automorphic discrete spectra of unitary groups announced by Kaletha--M\'inguez--Shin--White \cite{kmsw} (see \eqref{eq:amf} below for details), whose proof will be completed in their subsequent work.
\end{cav}

In \S \ref{s:packets}, we describe the representations in some local $L$- and $A$-packets for unitary groups explicitly.
This will be the input and output of Arthur's multiplicity formula.
In particular, a result of M{\oe}glin--Renard \cite{mr19} plays a crucial role in the proof since it expresses the representations in $A$-packets for real unitary groups in terms of cohomologically induced representations.
In \S \ref{s:proof}, we prove the main theorem.
We first globalize the given local theta lift for real unitary groups.
More precisely, we find a global theta lift such that
\begin{itemize}
\item at one real place, its localization is the theta lift of an arbitrary discrete series representation;
\item at another real place, its localization is the theta lift of a discrete series representation with sufficiently regular infinitesimal character, which is determined explicitly by Li \cite{li90};
\item at the other places, its localizations are easy to describe explicitly.
\end{itemize}
Then we use Arthur's multiplicity formula (viewed as a product formula) to transfer the information from the case of sufficiently regular infinitesimal character to the general case.
However, there is a serious technical difficulty in this argument: it is not straightforward to globalize a local theta lift for real unitary groups.

In \S 7, we overcome this difficulty, which we now explain in detail.
Let $\pi$ be a discrete series representation of $\U(p,q)$ and consider its theta lift $\theta_{r,s}(\pi)$ to $\U(r,s)$ when $p+q < r+s$.
Let $F \ne \Q$ be a totally real number field with ad\`ele ring of $\A$ and fix a real place $v_0$ of $F$.
Then it is easy to find
\begin{itemize}
\item anisotropic unitary groups $G$ and $H$ over $F$ such that $G_{v_0} = \U(p,q)$ and $H_{v_0} = \U(r,s)$, respectively;
\item an irreducible automorphic representation of $G(\A)$ such that $\varPi_{v_0} = \pi$.
\end{itemize}
But we need $G,H,\varPi$ such that the global theta lift $\theta(\varPi)$ to $H(\A)$ is nonzero.
For this, we proceed as follows.
\begin{enumerate}
\item 
\label{item:intro1}
Find $G,H,\varPi$ such that the local theta lift $\theta(\varPi_v)$ to $H_v$ is nonzero for all places $v$ of $F$.
\item 
\label{item:intro2}
Show that $\theta(\varPi)$ is nonzero if and only if $\theta(\varPi_v)$ is nonzero for all $v$.
\end{enumerate}
To show that $G,H,\varPi$ as in \eqref{item:intro1} exist, we appeal to Arthur's multiplicity formula.
In fact, we may impose further local conditions on $G,H,\varPi$ to make the global-to-local argument work.
On the other hand, \eqref{item:intro2} is largely but not completely known for unitary groups (see \cite[Theorem 1.3]{gqt}).
Indeed, the standard argument relies on the Rallis inner product formula, which contains the local integral at $v_0$ given by
\[
 \int_{\U(p,q)} (\omega(g) \varphi_1, \varphi_2) \overline{(\pi(g)f_1, f_2)} \, dg
\]
for $\varphi_1, \varphi_2 \in \omega$ and $f_1, f_2 \in \pi$.
Here $\omega$ is the Weil representation of $\U(p,q) \times \U(r,s)$ and $(\cdot, \cdot)$ denotes an invariant Hermitian inner product.
This integral is absolutely convergent and defines an invariant functional
\[
 \ZZ_{r,s}(\pi) : \omega \otimes \bar{\omega} \otimes \bar{\pi} \otimes \pi \longrightarrow \C.
\]
Then we have $\theta_{r,s}(\pi) \ne 0$ if $\ZZ_{r,s}(\pi) \ne 0$, and we are reduced to prove the converse.
However, it was previously only known that if $\theta_{r,s}(\pi) \ne 0$, then $\ZZ_{r', s'}(\pi) \ne 0$ for some $r', s'$ such that $r'+s'=r+s$ and $r' \equiv r \bmod 2$ (see \cite[Proposition 11.5]{gqt}).
Thus we need to prove the following (see Proposition \ref{p:key} below), which is the key technical innovation in this paper.

\begin{prop}
Let $\pi$ be a discrete series representation of $\U(p,q)$.
Then we have
\[
 \theta_{r,s}(\pi) \ne 0 \quad \Longleftrightarrow \quad \ZZ_{r,s}(\pi) \ne 0.
\]
\end{prop}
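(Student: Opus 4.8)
The implication $\ZZ_{r,s}(\pi) \ne 0 \Rightarrow \theta_{r,s}(\pi) \ne 0$ is standard: by invariance of the Haar measure on $\U(p,q)$ the nonzero $\U(r,s)$-invariant Hermitian form $\ZZ_{r,s}(\pi)$ on $\omega \otimes \bar\pi$ descends to the big theta lift of $\pi$, which is therefore nonzero (cf.\ \cite{gqt}). The content is the converse. The first step is to pass from $\ZZ_{r,s}(\pi)$ to a doubling zeta integral: by the doubling see-saw attached to the pair $(\U(p,q),\U(r,s))$ and the local Siegel--Weil formula, $\ZZ_{r,s}(\pi)$ is identified, up to an explicit nonzero constant, with the local doubling zeta integral $Z(s_0,f_1,f_2,\Phi)$ of $\pi$ at the point $s_0 = \tfrac12\bigl((r+s)-(p+q)\bigr)$ (suitably normalized), where $\Phi$ ranges over the image $R_{r,s}$ of the Weil representation attached to the Hermitian space of signature $(r,s)$ inside the degenerate principal series $I(s_0) = \Ind_P^{\U(p+q,p+q)}\bigl(\chi\,|\det|^{s_0}\bigr)$ of the doubled group $\U(p+q,p+q)$, with $P$ its Siegel parabolic and $\chi$ the relevant character. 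Thus $\ZZ_{r,s}(\pi) \ne 0$ if and only if $Z(s_0,\cdot,\cdot,\cdot)$ does not vanish identically on the submodule $R_{r,s} \subseteq I(s_0)$.

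Next, the doubling zeta integral does not vanish identically on all of $I(s_0)$ — for the value at $s_0$ one uses the absolute convergence of $\ZZ_{r,s}(\pi)$, which holds because $r+s>p+q$ and $\pi$ is tempered. Since the images $R_{r',s'}$ with $r'+s'=r+s$ and $r'\equiv r\bmod 2$ together span $I(s_0)$, one deduces — this is essentially \cite[Proposition 11.5]{gqt} — that $\ZZ_{r',s'}(\pi) \ne 0$, hence $\theta_{r',s'}(\pi) \ne 0$, for \emph{some} such companion signature $(r',s')$. The entire problem is therefore to upgrade the assertion that some $(r',s')$ works to the assertion that $(r,s)$ itself works.

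This is where the hypothesis that $\pi$ is a discrete series is used decisively. Unlike in the symplectic--orthogonal setting, the Hermitian spaces of a fixed dimension $r+s$ and fixed parity of $r$ are not related to one another by adding hyperbolic planes, so the sheets $R_{r',s'}$ are not nested and the usual Rallis-tower argument does not separate them. Instead the plan is to combine (i) the submodule structure of the (in general reducible) degenerate principal series $I(s_0)$ of $\U(p+q,p+q)$ at $s_0$, together with the identification of the irreducible constituent lying in each sheet $R_{r',s'}$, with (ii) the complete, explicit determination of the theta lifts $\theta_{r',s'}(\pi)$ of the discrete series $\pi$ for all signatures $(r',s')$ with $r'+s'=r+s$ — carried out in \S\ref{s:packets}--\S\ref{s:main} by means of Atobe's nonvanishing criterion \cite{atobe} and the description of $A$-packets for real unitary groups due to M{\oe}glin--Renard \cite{mr19}, which exhibits each nonzero $\theta_{r',s'}(\pi)$ as an explicit cohomologically induced representation $A_\q(\lambda)$ in the weakly fair range. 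Because $\pi$ is a discrete series its Langlands parameter is regular and the relevant intertwining spaces are one-dimensional; matching that regular parameter against the constituents of $I(s_0)$ singles out the unique irreducible constituent $J$ of $I(s_0)$ on which $Z(s_0,\cdot,\cdot,\cdot)$ can be nonzero, and the explicit description in (i)--(ii) forces $J \subseteq R_{r,s}$, while $J \not\subseteq R_{r',s'}$ exactly when $\theta_{r',s'}(\pi) = 0$. Hence $Z(s_0,\cdot,\cdot,\cdot)$ is nonzero on $R_{r,s}$, that is, $\ZZ_{r,s}(\pi) \ne 0$.

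The main obstacle is precisely the comparison in (i)--(ii): coupling the internal structure of the reducible degenerate principal series $I(s_0)$ with the theta sheets $R_{r',s'}$ and locating the constituent on which the doubling functional survives. This is manageable only thanks to the discrete-series hypothesis — regular parameter, one-dimensional multiplicity spaces, and all the relevant theta lifts, hence the relevant constituents of $I(s_0)$, known explicitly from \S\ref{s:packets}--\S\ref{s:main} — and I expect the genuine work to be the exclusion of the possibility that $\ZZ_{r,s}(\pi)$ vanishes while $\ZZ_{r',s'}(\pi)\ne 0$ for a signature with $r'+s'=r+s$, $r'\equiv r\bmod 2$, but $\theta_{r',s'}(\pi)=0$. (A complementary global route also looks promising — globalize $\pi$ so that the auxiliary global theta lift to the ambient unitary group is forced to be nonzero, for instance by realizing the globalized $\varPi$ itself as a global theta lift and applying a see-saw identity, and then read off $\ZZ_{r,s}(\pi)\ne 0$ from the global Rallis inner product formula — the difficulty there being to set up the globalization without circularly presupposing the local nonvanishing at the real place in question.)
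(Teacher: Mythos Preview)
Your proposal has a fatal circularity. You invoke ``the complete, explicit determination of the theta lifts $\theta_{r',s'}(\pi)$ \dots\ carried out in \S\ref{s:packets}--\S\ref{s:main}'' to pin down the constituent of $I(s_0)$ on which the doubling integral survives. But that explicit determination is Theorem~\ref{t:main}, and its proof in \S\ref{s:proof} \emph{uses} the proposition you are trying to prove: the global-to-local argument needs the nonvanishing of the global theta lift $\varSigma$, which is deduced from the Rallis inner product formula via \eqref{eq:nonvanishing-zeta}, and \eqref{eq:nonvanishing-zeta} at the real place $v_0$ is exactly the implication $\theta_{r,s}(\pi)\ne 0 \Rightarrow \ZZ_{r,s}(\pi)\ne 0$. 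The paper emphasizes this logical order explicitly: the proof of the proposition is purely local and does not rely on Arthur's classification or on Theorem~\ref{t:main}, precisely so that it can serve as input to the latter. Your alternative global route has the same problem, as you yourself note parenthetically.

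The paper's actual argument is entirely different from your degenerate-principal-series strategy. It is an induction on $m-n=\dim V-\dim W$, with $V$ fixed and $W,\pi$ varying. The inductive step (Lemmas~\ref{l:seesaw} and~\ref{l:induction}) uses the seesaw
\[
\begin{tikzcd}
 \U(W') \arrow[rd,dash] & \U(V)\times\U(V) \arrow[ld,dash] \\
 \U(W)\times\U(W^\perp) \arrow[u,dash] & \U(V) \arrow[u,dash]
\end{tikzcd}
\]
with $\dim W'=n+1$: if $\hat\pi$ occurs discretely in $\hat\pi'|_{\U(W)}$ and $\ZZ_{V,W'}(\pi')\ne 0$, then $\ZZ_{V,W}(\pi)\ne 0$. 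The existence of a suitable $\pi'$ with $\theta_{V,W'}(\pi')\ne 0$ is supplied by Atobe's construction, which in turn rests on He's proof of the discrete-series case of Gan--Gross--Prasad. This reduces everything to a base case with $k_\lambda\ge 0$ and $(r,s)=(r_\lambda+l,s_\lambda+l)$ (Lemma~\ref{l:base}), handled by a second seesaw with the \emph{compact} pair $\U(V_1)\times\U(V_2)=\U(r)\times\U(s)$, Howe's theory of $K$-types of minimal degree, and Li's Flensted-Jensen computation from \cite{li90}. None of this touches the constituent structure of $I(s_0)$ or the results of \S\ref{s:packets}--\S\ref{s:proof}.
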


To prove this proposition, we modify the argument of Atobe \cite{atobe} for the nonvanishing of $\theta_{r,s}(\pi)$, which uses the Gan--Gross--Prasad conjecture partially proved by He \cite{he}.
We stress that the proof is local and does not rely on Arthur's endoscopic classification.
In particular, the result and its application to the nonvanishing of global theta lifts are unconditional.

\subsection*{Acknowledgements}

We would like to thank Hiraku Atobe for useful discussions.
The author is partially supported by JSPS KAKENHI Grant Number 19H01781.

\subsection*{Notation}

For any representation $\pi$, we denote by $\pi^\vee$ the contragredient of $\pi$ and by $\bar{\pi}$ the complex conjugate of $\pi$.
For any real reductive group $G$, we work with the category of $(\g,K)$-modules unless otherwise specified, where $\g$ is the complexified Lie algebra of $G$ and $K$ is a maximal compact subgroup of $G$.
Thus by abuse of terminology, we usually mean a $(\g,K)$-module by a representation of $G$.

\section{Local theta lifting}
\label{s:local-theta}

In this section, we review the notion of local theta lifting.
We follow the convention in \cite{gi1, gi2}, which is different from that in \cite{kudla,hks}.

\subsection{Hermitian and skew-Hermitian spaces}
\label{ss:herm-spaces}

Let $F$ be a local field of characteristic zero.
Let $E$ be an \'etale quadratic algebra over $F$, so that $E$ is either $F \times F$ or a quadratic extension of $F$.
We denote by $c$ the nontrivial automorphism of $E$ over $F$.
Let $\Tr_{E/F}$ and $\operatorname{N}_{E/F}$ be the trace and norm maps from $E$ to $F$, respectively.
Let $\omega_{E/F}$ be the (possibly trivial) quadratic character of $F^\times$ associated to $E/F$ by local class field theory, so that $\operatorname{Ker}(\omega_{E/F}) = \operatorname{N}_{E/F}(E^\times)$.
Fix an element $\delta \in E^\times$ such that $\Tr_{E/F}(\delta) = 0$.

Fix $\varepsilon = \pm 1$.
Let $V$ be an $n$-dimensional $\varepsilon$-Hermitian space over $E$.
Namely, $V$ is a free $E$-module of rank $n$ equipped with a nondegenerate sesquilinear form $\langle \cdot, \cdot \rangle_V : V \times V \rightarrow E$ satisfying
\[
 \langle a v, b w \rangle_V = a b^c \langle v, w \rangle_V, \quad
 \langle w, v \rangle_V = \varepsilon \langle v, w \rangle_V^c
\]
for $a, b \in E$ and $v, w \in V$.
Let $\det(V) \in E^\times / \operatorname{N}_{E/F}(E^\times)$ be the determinant of the matrix
\[
 (\langle v_i, v_j \rangle_V)_{1 \le i,j \le n},
\]
where $v_1, \dots, v_n$ is a basis of $V$.
We define $\epsilon(V) = \pm 1$ by
\[
 \epsilon(V) =
 \begin{cases}
  \omega_{E/F}((-1)^{\frac{1}{2}n(n-1)} \cdot \det(V)) & \text{if $\varepsilon = +1$;} \\
  \omega_{E/F}((-1)^{\frac{1}{2}n(n-1)} \cdot \det(V) \cdot \delta^{-n}) & \text{if $\varepsilon = -1$.}
 \end{cases}
\]
Note that $\epsilon(V)$ depends on $\delta$ if $\varepsilon = -1$, $E \ne F \times F$, and $n$ is odd.
We denote by $\U(V)$ the unitary group of $V$, i.e.
\[
 \U(V) = \{ g \in \GL(V) \, | \, \text{$\langle gv, gw \rangle_V = \langle v, w \rangle_V$ for all $v, w \in V$} \}.
\]

Recall that given a positive integer $n$, the $n$-dimensional $\varepsilon$-Hermitian spaces over $E$ (up to isometry) are classified as follows.
\begin{itemize}
\item 
If $E = F \times F$, then there is a unique such space.
We denote it by $V_n^+$.
Then we have $\epsilon(V_n^+) = +1$ and $V_n^+ = \VV_n \otimes_F E$ for some $n$-dimensional vector space $\VV_n$ over $F$.
Moreover, the first projection $V_n^+ = \VV_n \times \VV_n \rightarrow \VV_n$ induces an isomorphism $\U(V_n^+) \cong \GL(\VV_n)$.
\item 
If $F$ is nonarchimedean and $E \ne F \times F$, then there are precisely two such spaces, which are distinguished by their signs.
We denote them by $V_n^+$ and $V_n^-$ so that $\epsilon(V_n^+) = +1$ and $\epsilon(V_n^-) = -1$.
\item 
If $F = \R$ and $E = \C$, then there are precisely $n+1$ such spaces, which are distinguished by their signatures.
We denote by $V_{p,q}$ the space of signature $(p,q)$, where $p,q$ are nonnegative integers such that $p+q=n$.
More precisely, we require that $V_{p,q}$ has a basis $v_1, \dots, v_n$ such that
\[
 \langle v_i, v_j \rangle_{V_{p,q}} = \zeta \times 
 \begin{cases}
  1 & \text{if $i = j \le p$;} \\
  -1 & \text{if $i = j > p$;} \\
  0 & \text{if $i \ne j$,}
 \end{cases}
\]
where 
\[
 \zeta = 
 \begin{cases}
  1 & \text{if $\varepsilon = +1$;} \\
  \sqrt{-1} & \text{if $\varepsilon = -1$.}
 \end{cases}
\]
Then we have
\[
 \epsilon(V_{p,q}) = (-1)^{\frac{1}{2}(p-q)(p-q-1)}
\]
if we take $\delta = \sqrt{-1}$.
\end{itemize}

\subsection{Theta lifts}

Let $V$ be an $m$-dimensional Hermitian space over $E$ and $W$ an $n$-dimensional skew-Hermitian space over $E$.
We regard $\W = V \otimes_E W$ as a vector space over $F$ and equip it with the symplectic form given by
\[
 \langle \hspace{-1mm} \langle v_1 \otimes w_1, v_2 \otimes w_2 \rangle \hspace{-1mm} \rangle = \Tr_{E/F}(\langle v_1, v_2 \rangle_V \langle w_1, w_2 \rangle_W).
\]
Let $\Sp(\W)$ be the symplectic group of $\W$ and $\Mp(\W)$ the metaplectic $\C^1$-cover of $\Sp(\W)$.
Then it follows from \cite{kudla,hks} that the natural homomorphism $\U(V) \times \U(W) \rightarrow \Sp(\W)$ has a lift
\[
 \iota_{V,W,\chi_V,\chi_W,\psi} : \U(V) \times \U(W) \longrightarrow \Mp(\W)
\]
which depends on the choice of the following data:
\begin{itemize}
\item two unitary characters $\chi_V, \chi_W$ of $E^\times$ such that
\[
 \chi_V|_{F^\times} = \omega_{E/F}^m, \quad
 \chi_W|_{F^\times} = \omega_{E/F}^n;
\]
\item a nontrivial additive character $\psi$ of $F$.
\end{itemize}
Composing this with the Weil representation $\omega_\psi$ of $\Mp(\W)$ relative to $\psi$, we obtain a representation
\[
 \omega_{V, W, \chi_V, \chi_W, \psi} = \omega_\psi \circ \iota_{V, W, \chi_V, \chi_W, \psi}
\]
of $\U(V) \times \U(W)$.
Note that if we apply the construction to the spaces $W$ and $V$ equipped with the Hermitian form $\delta^{-1} \langle \cdot, \cdot \rangle_W$ and the skew-Hermitian form $\delta \langle \cdot, \cdot \rangle_V$, respectively, then we obtain the representation
\[
 \omega_{V, W, \chi_V, \chi_W, \psi} \circ \mathrm{sw},
\]
where $\mathrm{sw}: \U(W) \times \U(V) \rightarrow \U(V) \times \U(W)$ switches factors.
In particular, we can freely switch the roles of $V$ and $W$.

For any irreducible representation $\pi$ of $\U(W)$, the maximal $\pi$-isotypic quotient of $\omega_{V, W, \chi_V, \chi_W, \psi}$ is of the form
\[
 \Theta_{V,W,\chi_V,\chi_W,\psi}(\pi) \boxtimes \pi
\]
for some representation $\Theta_{V,W,\chi_V,\chi_W,\psi}(\pi)$ of $\U(V)$ of finite length.
We denote by $\theta_{V, W, \chi_V,\chi_W,\psi}(\pi)$ the maximal semisimple quotient of $\Theta_{V, W, \chi_V,\chi_W,\psi}(\pi)$ and call it the theta lift of $\pi$ to $\U(V)$.
By the Howe duality \cite{howe2,wal,minguez,gt}, $\theta_{V, W, \chi_V,\chi_W,\psi}(\pi)$ is either zero or irreducible.

\begin{lem}
\label{l:complex-conjugate-theta}
We have
\[
 \overline{\theta_{V,W,\chi_V,\chi_W,\psi}(\pi)} = \theta_{-V,W,\chi_V,\chi_W,\psi}(\bar{\pi} \otimes (\chi_V \circ \det)) \otimes (\chi_W^{-1} \circ \det),
\]
where $-V$ denotes the space $V$ equipped with the Hermitian form $- \langle \cdot, \cdot \rangle_V$.
\end{lem}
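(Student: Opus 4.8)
The plan is to deduce the lemma from an isomorphism of Weil representations. Concretely, I would first reduce to proving
\[
 \overline{\omega_{V,W,\chi_V,\chi_W,\psi}} \;\cong\; \omega_{-V,W,\chi_V,\chi_W,\psi} \otimes \big( (\chi_W^{-1}\circ\det) \boxtimes (\chi_V^{-1}\circ\det) \big)
\]
as representations of $\U(V)\times\U(W)=\U(-V)\times\U(W)$, where $\chi_W^{-1}\circ\det$ is a character of $\U(V)$ and $\chi_V^{-1}\circ\det$ a character of $\U(W)$ (these make sense since $\chi_V,\chi_W$ are characters of $E^\times$ and $\det$ takes values in $E^1\subset E^\times$). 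Granting this, the lemma is formal: complex conjugation $M\mapsto\bar M$ of representations is an exact additive functor, compatible with external tensor products, with twisting by one-dimensional characters, and with passage to the maximal semisimple quotient, and the maximal $\sigma$-isotypic quotient of a finite-length representation of $\U(V)\times\U(W)$ is computed one factor at a time. Taking the maximal $\bar\pi$-isotypic quotients of the two sides of the displayed isomorphism --- on the right the $\U(W)$-action is that on $\omega_{-V,W,\chi_V,\chi_W,\psi}$ twisted by $\chi_V^{-1}\circ\det$, so the relevant component of the untwisted Weil representation is its maximal $\big(\bar\pi\otimes(\chi_V\circ\det)\big)$-isotypic quotient --- and recalling that the maximal $\pi$-isotypic quotient of $\omega_{V,W,\chi_V,\chi_W,\psi}$ is $\Theta_{V,W,\chi_V,\chi_W,\psi}(\pi)\boxtimes\pi$, one obtains
\[
 \overline{\Theta_{V,W,\chi_V,\chi_W,\psi}(\pi)} = \Theta_{-V,W,\chi_V,\chi_W,\psi}\big(\bar\pi\otimes(\chi_V\circ\det)\big)\otimes(\chi_W^{-1}\circ\det);
\]
passing to maximal semisimple quotients gives the lemma.

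To prove the displayed Weil representation identity I would realize $\omega_{V,W,\chi_V,\chi_W,\psi}$ in an explicit Schr\"odinger (mixed) model, where the action of $\U(V)\times\U(W)$ is given by the formulas of Kudla \cite{kudla} and Harris--Kudla--Sweet \cite{hks}: these are built from $\psi$, the splitting characters $\chi_V$ and $\chi_W$, and Weil indices $\gamma_\psi$ of the relevant Hermitian and skew-Hermitian forms. Complex conjugation identifies the conjugate space with the same Schwartz space via $\varphi\mapsto\bar\varphi$ and turns the defining formulas into the same formulas with $\psi,\chi_V,\chi_W,\gamma_\psi$ replaced by $\psi^{-1},\chi_V^{-1},\chi_W^{-1},\gamma_\psi^{-1}$ (using that $\psi,\chi_V,\chi_W$ are unitary and $\gamma_{\psi^{-1}}=\gamma_\psi^{-1}$). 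One now invokes the dictionary: replacing $\psi$ by $\psi^{-1}$ amounts to negating the symplectic form on $\W=V\otimes_E W$ --- this leaves $\Sp(\W)$, the cover $\Mp(\W)$ and the embedding $\U(V)\times\U(W)\hookrightarrow\Sp(\W)$ unchanged while interchanging $\omega_\psi$ with $\omega_{\psi^{-1}}$ --- and negating that form is effected by replacing $V$ with $-V$. After this identification the conjugated Weil representation is $\omega_{-V,W,\chi_V^{-1},\chi_W^{-1},\psi}$, and changing the splitting characters from $(\chi_V^{-1},\chi_W^{-1})$ back to $(\chi_V,\chi_W)$ modifies the $\U(W)$- and $\U(V)$-actions by one-dimensional characters that a direct comparison of the cocycle formulas identifies as $\chi_V^{-1}\circ\det$ on $\U(W)$ and $\chi_W^{-1}\circ\det$ on $\U(V)$.

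The hardest part is precisely this last bookkeeping. Several effects have to be tracked at once --- the passage $\psi\mapsto\psi^{-1}$ coming from conjugation, the sign change of the symplectic form when $V$ is replaced by $-V$ (which alters the metaplectic $2$-cocycle and the discriminant and Hasse invariant of $V$ that enter the splitting over $\U(W)$), and the dependence of the splittings over $\U(V)$ and over $\U(W)$ on $\chi_W$ and $\chi_V$ respectively --- and these contributions partially cancel, so one must check carefully that the net discrepancy is \emph{exactly} $(\chi_W^{-1}\circ\det)\boxtimes(\chi_V^{-1}\circ\det)$ rather than that character multiplied by some extra quadratic twist. As consistency checks, one can verify the assertion directly in the split case $E=F\times F$, where $\U(V)$ and $\U(W)$ are general linear groups and the computation is elementary, and one can observe that the asserted formula is manifestly involutive. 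Alternatively, some of the metaplectic bookkeeping can be bypassed using the unitarity of the Weil representation, $\overline{\omega_{V,W,\chi_V,\chi_W,\psi}}\cong\omega_{V,W,\chi_V,\chi_W,\psi}^\vee$, together with the known behaviour of $\Theta_{V,W,\chi_V,\chi_W,\psi}$ under contragredients and under $\chi_V\leftrightarrow\chi_V^{-1}$, $\chi_W\leftrightarrow\chi_W^{-1}$, $V\leftrightarrow-V$.
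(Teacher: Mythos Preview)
Your approach is correct and is essentially the same as the paper's: the paper's proof consists precisely of the two-line chain
\[
 \overline{\omega_{V,W,\chi_V,\chi_W,\psi}} = \omega_{V,W,\chi_V^{-1},\chi_W^{-1},\psi^{-1}} = \omega_{-V,W,\chi_V,\chi_W,\psi} \otimes ((\chi_W^{-1}\circ\det)\boxtimes(\chi_V^{-1}\circ\det)),
\]
stated without further justification, which is exactly the Weil-representation identity you isolate (your intermediate step $\omega_{-V,W,\chi_V^{-1},\chi_W^{-1},\psi}$ coincides with the paper's $\omega_{V,W,\chi_V^{-1},\chi_W^{-1},\psi^{-1}}$ via the equivalence $V\mapsto -V \leftrightarrow \psi\mapsto\psi^{-1}$ that you note). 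Your discussion of how to verify each step and the formal deduction of the lemma from this identity simply fills in details the paper leaves implicit.
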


\begin{proof}
The assertion follows from the fact that
\begin{align*}
  \overline{\omega_{V,W,\chi_V,\chi_W,\psi}}
  & = \omega_{V,W,\chi_V^{-1},\chi_W^{-1},\psi^{-1}} \\
  & = \omega_{-V,W,\chi_V,\chi_W,\psi} \otimes 
  ((\chi_W^{-1} \circ \det) \boxtimes (\chi_V^{-1} \circ \det)).
\end{align*}
\end{proof}

\section{Representations of real unitary groups}
\label{s:real-rep}

In this section, we recall some representations of real unitary groups which we will use later.

\subsection{Discrete series representations}
\label{ss:ds}

Let $p, q$ be nonnegative integers and put $n=p+q$.
Let $G = \U(p,q)$ be the unitary group of signature $(p,q)$, which we realize as
\[
 \U(p,q) = \left\{ g \in \GL_n(\C) \, \left| \, {}^t \bar{g}
 \begin{pmatrix}
  \1_p & \\
  & -\1_q
 \end{pmatrix}
 g = 
 \begin{pmatrix}
  \1_p & \\
  & -\1_q 
 \end{pmatrix}
 \right. \right\}.
\]
We define a maximal compact subgroup $K \cong \U(p) \times \U(q)$ of $G$ by
\[
 K = \{ g \in G \, | \, {}^t \bar{g}^{-1} = g \}.
\]
Let $\g_0$ be the Lie algebra of $G$ and $\t_0$ the Cartan subalgebra of $\g_0$ consisting of diagonal matrices.
Let $\g = \g_0 \otimes_\R \C$ and $\t = \t_0 \otimes_\R \C$ be their complexifications.
We identify $\t^*$ with $\C^n$ via the basis $\varepsilon_1,\dots,\varepsilon_n$ given by
\[
 \varepsilon_i(\diag(a_1, \dots, a_n)) = a_i
\]
and define a bilinear form $\langle \cdot, \cdot \rangle : \t^* \times \t^* \rightarrow \C$ by
\[
 \langle x, y \rangle = x_1 y_1 + \dots + x_n y_n
\]
for $x = (x_1,\dots,x_n), y = (y_1,\dots,y_n) \in \t^* \cong \C^n$.
Let $\Delta$ be the set of roots of $\t$ in $\g$, so that 
\[
 \Delta = \{ \pm(\varepsilon_i - \varepsilon_j) \, | \, 1 \le i < j \le n \}.
\]
Let $\Delta_c$ be the set of compact roots in $\Delta$ and take the positive system $\Delta_c^+$ of $\Delta_c$ given by 
\[
 \Delta_c^+ = \{ \varepsilon_i - \varepsilon_j  \, | \, 1 \le i < j \le p \} \cup \{ \varepsilon_i - \varepsilon_j \, | \, p < i < j \le n \}.
\]
Then the discrete series representations of $G$ are parametrized by Harish-Chandra parameters (which are dominant for $\Delta_c^+$)
\[
 \lambda = (\lambda_1, \dots, \lambda_n) \in \sqrt{-1} \t_0^*,
\]
where 
\begin{itemize}
\item $\lambda_i \in \Z + \frac{n-1}{2}$;
\item $\lambda_i \ne \lambda_j$ if $i \ne j$;
\item $\lambda_1 > \dots > \lambda_p$ and $\lambda_{p+1} > \dots > \lambda_n$.
\end{itemize}

\subsection{Cohomologically induced representations}
\label{ss:cohom}

We retain the notation of the previous subsection.
In particular, $G = \U(p,q)$.
Let $\xx \in \sqrt{-1} \t_0$, so that the adjoint action $\operatorname{ad}(\xx)$ on $\g$ is diagonizable with real eigenvalues.
We denote by $\l(\xx)$ the sum of zero eigenspaces of $\operatorname{ad}(\xx)$ in $\g$, i.e.~the centralizer of $\xx$ in $\g$, and by $\u(\xx)$ the sum of positive eigenspaces of $\operatorname{ad}(\xx)$ in $\g$.
Then
\[
 \q(\xx) = \l(\xx) \oplus \u(\xx)
\]
is a $\theta$-stable parabolic subalgebra of $\g$.
We write $\q(\xx) = \q_{\pp,\qq}$ and $\l(\xx) = \l_{\pp,\qq}$ if $\xx$ is of the form
\[
 \xx = \diag(\underbrace{x_1,\dots,x_1}_{p_1}, \dots, \underbrace{x_k,\dots,x_k}_{p_k}, \underbrace{x_1,\dots,x_1}_{q_1}, \dots, \underbrace{x_k,\dots,x_k}_{q_k}),
\]
where $\pp = (p_1, \dots, p_k), \qq = (q_1, \dots, q_k)$ are tuples of nonnegative integers such that 
\[
 p_1 + \dots + p_k = p, \quad
 q_1 + \dots + q_k = q
\]
and $x_1, \dots, x_k$ are real numbers such that $x_1 > \dots > x_k$.
Let $L_{\pp,\qq}$ be the normalizer of $\q_{\pp,\qq}$ in $G$, so that
\[
 L_{\pp,\qq} \cong \U(p_1,q_1) \times \dots \times \U(p_k,q_k).
\]

For $\q = \q_{\pp,\qq}$ and 
\[
 \lambda = (\underbrace{\lambda_1,\dots,\lambda_1}_{p_1}, \dots, \underbrace{\lambda_k,\dots,\lambda_k}_{p_k}, \underbrace{\lambda_1,\dots,\lambda_1}_{q_1}, \dots, \underbrace{\lambda_k,\dots,\lambda_k}_{q_k}) \in \sqrt{-1} \t_0^*
\]
with $\lambda_i \in \Z$, which is the differential of the $1$-dimensional representation
\[
 {\det}^{\lambda_1} \boxtimes \cdots \boxtimes {\det}^{\lambda_k}
\]
of $L_{\pp,\qq}$, we consider the cohomologically induced representation
\[
 A_\q(\lambda)
\]
defined by \cite[(5.6)]{kv}.
The following summarizes some properties of $A_\q(\lambda)$.
\begin{itemize}
\item
If $\lambda$ is in the good range, i.e.~$\lambda_i - \lambda_{i+1} > -1$ for all $i$, then $A_\q(\lambda)$ is nonzero and irreducible.
\item 
If $\lambda$ is in the weakly fair range, i.e.~$\lambda_i - \lambda_{i+1} \ge -\frac{1}{2}(p_i+q_i+p_{i+1}+q_{i+1})$ for all $i$, then $A_\q(\lambda)$ is unitary (but possibly zero).
If further $A_\q(\lambda)$ is nonzero, then it is irreducible, which is a special property in the case of unitary groups (see \cite{matumoto, trapa}).
\item
If $\lambda$ is in the weakly fair range, then there is an algorithm due to Trapa \cite{trapa} which determines the nonvanishing and the Langlands parameter of $A_\q(\lambda)$ in the case of unitary groups.
\end{itemize}

\section{Statement of the main theorem}
\label{s:main}

In this section, we state the main theorem of this paper, which describes the theta lifts of discrete series representations of real unitary groups explicitly.

\subsection{Setup}
\label{ss:main-theorem-setup}

We consider the theta lifting from $\U(W)$ to $\U(V)$, where $W$ is an $n$-dimensional skew-Hermitian space over $\C$ and $V$ is an $m$-dimensional Hermitian space over $\C$.
Let $(p,q)$ and $(r,s)$ be the signatures of $W$ and $V$, respectively, so that $p+q = n$ and $r+s = m$. 
We identify $\U(W)$ and $\U(V)$ with $\U(p,q)$ and $\U(r,s)$, respectively, via the bases as in \S \ref{ss:herm-spaces}.

From now on, we take the characters $\chi_V, \chi_W$ of $\C^\times$ given by
\[
 \chi_V(z) = \left( \frac{z}{\sqrt{z \bar{z}}} \right)^{m_0}, \quad
 \chi_W(z) = \left( \frac{z}{\sqrt{z \bar{z}}} \right)^{n_0} 
\]
with some fixed integers $m_0, n_0$ such that
\[
 m_0 \equiv m \bmod 2, \quad
 n_0 \equiv n \bmod 2,
\]
and the character $\psi$ of $\R$ given by
\[
 \psi(x) = e^{-2 \pi \sqrt{-1} x}.
\]
(We make this choice so that Lemma \ref{l:K-type-corresp} below holds.)
Then we write the theta lift of an irreducible representation $\pi$ of $\U(W) = \U(p,q)$ to $\U(V) = \U(r,s)$ as
\[
 \theta_{r,s}(\pi) = \theta_{V, W, \chi_V, \chi_W, \psi}(\pi).
\]

\subsection{Explicit description of theta lifts}

We now state our main result.

\begin{thm}
\label{t:main}
Let $\pi$ be a discrete series representation of $\U(p,q)$ with Harish-Chandra parameter $\lambda$.
Assume that its theta lift $\theta_{r,s}(\pi)$ to $\U(r,s)$ is nonzero.
Put $n=p+q$ and $m=r+s$.
\begin{enumerate}
\item 
\label{item:main1}
If $m > n$, then we have
\[
 \theta_{r,s}(\pi) = A_\q(\lambda'), 
\]
where $\q$ and $\lambda'$ are given as follows.
We may write
\[
 \lambda = (\alpha_1, \dots, \alpha_x, \beta_1, \dots, \beta_y, \gamma_1, \dots, \gamma_z, \delta_1, \dots, \delta_w) + \bigg( \frac{m_0}{2}, \dots, \frac{m_0}{2} \bigg)
\]
with
\begin{itemize}
\item $\alpha_i, \gamma_j > 0$ and $\beta_i, \delta_j \le 0$;
\item $x+y = p$ and $z+w = q$;
\item $x+w \le r$ and $z+y \le s$.
\end{itemize}
Then $\q = \q(\xx)$ is associated to
\[
 \xx = (\alpha_1, \dots, \alpha_x, \underbrace{\epsilon, \dots, \epsilon}_{r-x-w}, \delta_1, \dots, \delta_w, \gamma_1, \dots, \gamma_z, \underbrace{\epsilon, \dots, \epsilon}_{s-z-y}, \beta_1, \dots, \beta_y)
\]
with $\epsilon \in \R$ such that $\min\{ \alpha_x, \gamma_z\} > \epsilon > \max \{ \beta_1, \delta_1 \}$ and $\lambda'$ is given by
\[
 \lambda' = (\alpha_1', \dots, \alpha_x', \epsilon', \dots, \epsilon', \delta_1', \dots, \delta_w', \gamma_1', \dots, \gamma_z', \epsilon', \dots, \epsilon', \beta_1', \dots, \beta_y')
 + \bigg(\frac{n_0}{2}, \dots, \frac{n_0}{2}\bigg)
\]
with 
\begin{align*}
 \alpha_i' & = \alpha_i - \frac{m+1}{2} + i + \# \{ k \, | \, \gamma_k > \alpha_i \}, \\ 
 \beta_j' & = \beta_j + \frac{m-1}{2} + j-y - \# \{ l \, | \, \delta_l < \beta_j \}, \\
 \gamma_k' & = \gamma_k -\frac{m+1}{2} + k + \# \{ i \, | \, \alpha_i > \gamma_k \}, \\
 \delta_l' & = \delta_l + \frac{m-1}{2} + l-w - \# \{ j \, | \, \beta_j < \delta_l \}, \\
 \epsilon' & = x+z - \frac{n}{2}.
\end{align*}
\item 
\label{item:main2}
If $m \le n$, then $\theta_{r,s}(\pi)$ is the discrete series representation of $\U(r,s)$ with Harish-Chandra parameter $\lambda'$, where $\lambda'$ is given as follows.
Put $k=n-m$.
We may write either
\[
 \lambda = \bigg(\alpha_1, \dots, \alpha_x, \frac{k-1}{2}, \frac{k-3}{2}, \dots, -\frac{k-1}{2}, \beta_1, \dots, \beta_y, \gamma_1, \dots, \gamma_z, \delta_1, \dots, \delta_w\bigg) + \bigg( \frac{m_0}{2}, \dots, \frac{m_0}{2} \bigg)
\]
with 
\begin{itemize}
\item $\alpha_i, \gamma_j > 0$ and $\beta_i, \delta_j < 0$;
\item $x+y+k = p$ and $z+w = q$;
\item $x+w = r$ and $z+y = s$,
\end{itemize}
or 
\[
 \lambda = \bigg(\alpha_1, \dots, \alpha_x, \beta_1, \dots, \beta_y, \gamma_1, \dots, \gamma_z, \frac{k-1}{2}, \frac{k-3}{2}, \dots, -\frac{k-1}{2}, \delta_1, \dots, \delta_w \bigg) + \bigg( \frac{m_0}{2}, \dots, \frac{m_0}{2} \bigg)
\]
with 
\begin{itemize}
\item $\alpha_i, \gamma_j > 0$ and $\beta_i, \delta_j < 0$;
\item $x+y = p$ and $z+w+k = q$;
\item $x+w = r$ and $z+y = s$.
\end{itemize}
Then $\lambda'$ is given by
\[
 \lambda' = (\alpha_1, \dots, \alpha_x, \delta_1, \dots, \delta_w, \gamma_1, \dots, \gamma_z, \beta_1, \dots, \beta_y) + \bigg( \frac{n_0}{2}, \dots, \frac{n_0}{2} \bigg).
\]
\end{enumerate}
\end{thm}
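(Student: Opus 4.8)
The plan is to reduce the computation of $\theta_{r,s}(\pi)$ for an arbitrary discrete series $\pi$ to the case of sufficiently regular Harish-Chandra parameter, which is settled by Li \cite{li90}, and to transport the answer along Arthur's multiplicity formula. Fix a totally real number field $F \ne \Q$ with a distinguished real place $v_0$. First I would produce anisotropic unitary groups $G$ and $H$ over $F$ with $G_{v_0} \cong \U(p,q)$ and $H_{v_0} \cong \U(r,s)$, together with an irreducible automorphic representation $\varPi$ of $G(\A)$ in the discrete spectrum such that:
\begin{itemize}
\item $\varPi_{v_0} = \pi$;
\item at a second real place $v_1$, $\varPi_{v_1}$ is a discrete series representation whose Harish-Chandra parameter is as regular as we please, so that Li's theorem describes $\theta(\varPi_{v_1})$ explicitly as a cohomologically induced representation in the good range;
\item at every remaining place $\varPi_v$ is unramified, or of a similarly transparent type, so that $\theta(\varPi_v)$ and its position in the relevant local $A$-packet are immediately computable;
\item $\theta(\varPi_v) \ne 0$ for every place $v$.
\end{itemize}
Such a $\varPi$ is obtained by prescribing the local components and then invoking Arthur's multiplicity formula to see that the resulting admissible representation is automorphic; since the discrete series requirements at $v_0, v_1$ and the local nonvanishing requirements involve only finitely many places, they can all be met at once.

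The crucial point — and the step I expect to be the main obstacle — is to deduce from this that the \emph{global} theta lift $\theta(\varPi)$ to $H(\A)$ is nonzero. The standard route is the Rallis inner product formula, which reduces this to the nonvanishing of local doubling integrals: at every place $v \ne v_0$ such nonvanishing is known to follow from $\theta(\varPi_v) \ne 0$ (cf.\ \cite[Theorem 1.3]{gqt}), while at $v_0$ it is exactly the content of Proposition \ref{p:key}, the equivalence $\theta_{r,s}(\pi) \ne 0 \Leftrightarrow \ZZ_{r,s}(\pi) \ne 0$. The latter I would prove purely locally, modifying Atobe's argument \cite{atobe} for the nonvanishing of theta lifts, which rests on the local Gan--Gross--Prasad conjecture established by He \cite{he}; notably this part uses neither globalization nor Arthur's classification. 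With $\theta(\varPi) \ne 0$ in hand, $\theta(\varPi)$ is an irreducible automorphic representation of $H(\A)$ in the discrete spectrum.

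It then remains to run Arthur's multiplicity formula backwards. I would first identify the global $A$-parameter $\psi$ of $\theta(\varPi)$: it is obtained from the (discrete, hence with trivial $\SL_2$-factor) parameter of $\varPi$ by the expected recipe for theta lifts — Adams' conjecture — which, when $m > n$, appends an $\SL_2$-block of size $m-n$ twisted by $\chi_V$, accounting for the cohomologically induced rather than tempered nature of the lift, and which when $m \le n$ appends nothing so that the lift stays a discrete series. At each place $v$ the component $\theta(\varPi)_v$ lies in the local $A$-packet $\Pi_{\psi_v}$, whose members for real unitary groups are by M{\oe}glin--Renard \cite{mr19} precisely the cohomologically induced representations $A_\q(\lambda')$, each labelled by a character of the component group. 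Reading the multiplicity formula for $\theta(\varPi)$ as a product of these characters, and using that the characters at $v_1$ (from Li's description together with \cite{mr19}) and at the easy places are known, the character at $v_0$ is forced; \cite{mr19} then identifies $\theta(\varPi)_{v_0} = \theta_{r,s}(\pi)$ as a definite $A_\q(\lambda')$.

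The final task is the combinatorial verification that this $\q$ and $\lambda'$ are the ones written in the statement: when $m > n$, that $\lambda'$ lies in the weakly fair range and that the nonvanishing hypothesis on $\theta_{r,s}(\pi)$ forces the inequalities $x + w \le r$ and $z + y \le s$; when $m \le n$, that $\lambda'$ is a genuine Harish-Chandra parameter of a discrete series of $\U(r,s)$. This is bookkeeping with the root system of $\U(r,s)$ and the normalizations of \S\ref{ss:cohom}, and once the globalization above is carried out, the transfer of information from the regular to the singular case via Arthur's endoscopic classification is essentially formal.
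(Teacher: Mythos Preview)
Your proposal is essentially the paper's own strategy: globalize, use Proposition~\ref{p:key} plus the Rallis inner product formula to get a nonzero global theta lift, identify its $A$-parameter, and read off the character at $v_0$ from Arthur's multiplicity formula together with M{\oe}glin--Renard. Two points deserve sharpening.

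First, your treatment of the case $m\le n$ does not quite work as stated. When $m<n$ the $A$-parameter of $\theta(\varPi)$ lives on $\U_m$ and is $m$-dimensional, so it cannot be obtained from the $n$-dimensional parameter of $\varPi$ by ``appending nothing''; and the Rallis inner product setup you invoke is arranged in the paper only for $m>n$. The paper instead reduces part~\eqref{item:main2} to part~\eqref{item:main1}: one writes down the candidate discrete series $\sigma$ of $\U(r,s)$ with parameter $\lambda'$, checks via Remark~\ref{r:nonvanishing-li} that $\theta_{p,q}(\sigma)\ne 0$, applies part~\eqref{item:main1} in the direction $\U(r,s)\to\U(p,q)$ (where now the target is larger), and verifies that the resulting $A_{\q'}(\lambda'')$ is exactly $\pi$; Howe duality then gives $\theta_{r,s}(\pi)=\sigma$. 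The case $m=n$ is already Li's result.

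Second, to make the product-formula step clean the paper does not use an arbitrary $F\ne\Q$ with two real places but a real \emph{quartic} field with real places $v_0,v_1,v_2,v_3$: at $v_2$ and $v_3$ one places the \emph{same} representation $\pi_0$ of the compact form $\U(n,0)$, so that $\eta'_{v_2}=\eta'_{v_3}$ and their contributions cancel. One also arranges, via Lemmas~\ref{l:csd-char-local} and \ref{l:csd-char-global}, that the relevant local $\epsilon$-factors at all nonarchimedean nonsplit places are $+1$, forcing $\epsilon_{\varPhi'}$ to be trivial. This is what makes the identity $\eta'_{v_0}=\eta'_{v_1}$ drop out with no residual sign to chase; your ``unramified or similarly transparent'' places would need to be engineered with the same care. (Incidentally, the extra $\SL_2$-block is $\chi_W\boxtimes S_{m-n}$, not a $\chi_V$-twist.)
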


\begin{rem}
Theorem \ref{t:main} follows from a result of Li \cite{li90} under the assumption that $m \ge n$ and 
\[
 \alpha_x, - \beta_1, \gamma_z, -\delta_1 \ge \frac{m-n+1}{2}.
\]
\end{rem}

\subsection{Nonvanishing of theta lifts}
\label{ss:nonvanishing}

For the convenience of the reader, we include here a combinatorial criterion for the nonvanishing of $\theta_{r,s}(\pi)$ due to Atobe \cite{atobe}, which we will use in the proof of Theorem \ref{t:main}.
Note that the choice of $\psi$ in \cite[\S 3.3]{atobe} is not made explicit, but in fact, it agrees with our choice (see \cite[Lemma 1.4.5]{paul1} and Lemma \ref{l:K-type-corresp} below).

Fix $k_0 = -1$ or $0$.
We consider the theta lifting from $\U(W)$ to $\U(V)$, where $W$ is an $n$-dimensional skew-Hermitian space over $\C$ and $V$ varies over $m$-dimensional Hermitian spaces over $\C$ with 
\[
 m \equiv n + k_0 \bmod 2.
\]
Let $(p,q)$ be the signature of $W$, so that $p+q = n$.
Let $\pi$ be a discrete series representation of $\U(W) = \U(p,q)$ with Harish-Chandra parameter $\lambda$.
Following \cite[Definition 1.6]{atobe}, we will define some invariants of $\lambda$ which depend on $k_0$ and $\chi_V$.
\begin{enumerate}
\item
Write
\[
 \lambda = \lambda_0 + \bigg( \frac{m_0}{2}, \dots, \frac{m_0}{2} \bigg)
\]
with $\lambda_0 = (\lambda_{0,1}, \dots, \lambda_{0,n})$, so that 
\begin{itemize}
\item $\lambda_{0,i} \in \Z + \frac{k_0 - 1}{2}$;
\item $\lambda_{0,i} \ne \lambda_{0,j}$ if $i \ne j$;
\item $\lambda_{0,1} > \dots > \lambda_{0,p}$ and $\lambda_{0,p+1} > \dots > \lambda_{0,n}$.
\end{itemize}
\item
Let $k_\lambda$ be the largest positive integer with $k_\lambda \equiv k_0 \bmod 2$ such that 
\[
 \bigg\{ \frac{k_\lambda-1}{2}, \frac{k_\lambda-3}{2}, \dots, -\frac{k_\lambda-1}{2} \bigg\}
 \subset \{ \lambda_{0,1}, \dots, \lambda_{0,p} \}
\]
or 
\[
 \bigg\{ \frac{k_\lambda-1}{2}, \frac{k_\lambda-3}{2}, \dots, -\frac{k_\lambda-1}{2} \bigg\}
 \subset \{ \lambda_{0,p+1}, \dots, \lambda_{0,n} \}.
\]
If such an integer does not exist, we put $k_\lambda = k_0$.
\item
If $k_\lambda \ge 1$, we write
\[
 \lambda_0 = \bigg( \alpha_1, \dots, \alpha_x, \frac{k_\lambda-1}{2}, \frac{k_\lambda-3}{2}, \dots, -\frac{k_\lambda-1}{2}, \beta_1, \dots, \beta_y, \gamma_1, \dots, \gamma_z, \delta_1, \dots, \delta_w \bigg)
\]
with
\begin{itemize}
\item $\alpha_i, \gamma_j > 0$ and $\beta_i, \delta_j < 0$;
\item $x+y+k_\lambda = p$ and $z+w = q$,
\end{itemize}
or 
\[
 \lambda_0 = \bigg( \alpha_1, \dots, \alpha_x, \beta_1, \dots, \beta_y, \gamma_1, \dots, \gamma_z, \frac{k_\lambda-1}{2}, \frac{k_\lambda-3}{2}, \dots, -\frac{k_\lambda-1}{2}, \delta_1, \dots, \delta_w \bigg)
\]
with
\begin{itemize}
\item $\alpha_i, \gamma_j > 0$ and $\beta_i, \delta_j < 0$;
\item $x+y = p$ and $z+w+k_\lambda = q$.
\end{itemize}
If $k_\lambda \le 0$, we write
\[
 \lambda_0 = (\alpha_1, \dots, \alpha_x, \beta_1, \dots, \beta_y, \gamma_1, \dots, \gamma_z, \delta_1, \dots, \delta_w) 
\]
with 
\begin{itemize}
\item $\alpha_i, \gamma_j > 0$ and $\beta_i, \delta_j < 0$;
\item $x+y = p$ and $z+w = q$.
\end{itemize}
\item
Put
\[
 r_\lambda = x+w, \quad
 s_\lambda = z+y.
\]
\item 
Define a finite subset $\XX_\lambda$ of $\frac{1}{2} \Z \times \{ \pm 1 \}$ by
\[
 \XX_\lambda = \{ (\lambda_{0,1}, +1), \dots, (\lambda_{0,p}, +1) \} \cup \{ (\lambda_{0,p+1}, -1), \dots, (\lambda_{0,n}, -1) \}.
\]
\item
Define a sequence 
\[
 \XX_\lambda = \XX^{(0)}_\lambda \supset \XX^{(1)}_\lambda \supset \dots \supset \XX^{(j)}_\lambda \supset \cdots
\]
inductively as follows.
Write the image of $\XX^{(j)}_\lambda$ under the projection $\frac{1}{2} \Z \times \{ \pm 1 \} \rightarrow \frac{1}{2} \Z$ as 
\[
 \{ \xi_1, \xi_2, \dots \}
\]
with $\xi_1 > \xi_2 > \cdots$ and define a subset $\XX^{(j+1)}_\lambda$ of $\XX^{(j)}_\lambda$ by 
\[
 \XX^{(j+1)}_\lambda = \XX^{(j)}_\lambda \smallsetminus
 \bigg( \bigcup_i \{ (\xi_i, +1), (\xi_{i+1}, -1) \} \bigg), 
\]
where $i$ runs over indices satisfying one of the following conditions:
\begin{itemize}
\item $\xi_i \in \{ \alpha_1, \dots, \alpha_x \}$ and $\xi_{i+1} \in \{ \gamma_1, \dots, \gamma_z \}$;
\item $\xi_i \in \{ \beta_1, \dots, \beta_y \}$ and $\xi_{i+1} \in \{ \delta_1, \dots, \delta_w \}$.
\end{itemize}
\item
Put
\[
 \XX^{(\infty)}_\lambda = \XX^{(j)}_\lambda = \XX^{(j+1)}_\lambda = \cdots
\]
with some sufficiently large $j$.
\item 
For any integer $t$, we define subsets $\CC^\pm_\lambda(t)$ of $\XX^{(\infty)}_\lambda$ by
\begin{align*}
 \CC^+_\lambda(t) & = \left\{ (\xi,+1) \in \XX^{(\infty)}_\lambda \, \left| \, 0 \le \frac{k_\lambda-1}{2} + \xi < t \right. \right\}, \\ 
 \CC^-_\lambda(t) & = \left\{ (\xi,-1) \in \XX^{(\infty)}_\lambda \, \left| \, 0 \le \frac{k_\lambda-1}{2} - \xi < t \right. \right\}.
\end{align*}
\end{enumerate}

Then we have the following criterion for the nonvanishing of theta lifts.

\begin{thm}[{Atobe \cite[Theorem 1.7]{atobe}}]
\label{t:nonvanishing}
Let $\pi$ be a discrete series representation of $\U(p,q)$ with Harish-Chandra parameter $\lambda$.
Let $l,t$ be integers with $t \ge 1$.
\begin{enumerate}
\item
Assume that $k_\lambda = -1$.
Then
\begin{itemize}
\item $\theta_{r_\lambda+l+1, s_\lambda+l}(\pi)$ is nonzero if and only if $l \ge 0$;
\item $\theta_{r_\lambda+l+2t+1, s_\lambda+l}(\pi)$ is nonzero if and only if
\[
 l \ge 0, \quad
 \# \CC^+_\lambda(l+t) \le l, \quad
 \# \CC^-_\lambda(l+t) \le l.
\]
\end{itemize}
\item
Assume that $k_\lambda \ge 0$.
Then
\begin{itemize}
\item $\theta_{r_\lambda+l, s_\lambda+l}(\pi)$ is nonzero if and only if $l \ge 0$;
\item $\theta_{r_\lambda+l+2t, s_\lambda+l}(\pi)$ is nonzero if and only if
\[
 l \ge k_\lambda, \quad
 \# \CC^+_\lambda(l+t) \le l, \quad
 \# \CC^-_\lambda(l+t) \le l.
\]
\end{itemize}
\end{enumerate}
\end{thm}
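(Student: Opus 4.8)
The plan is to reduce the statement to the determination of first occurrence indices in the Witt towers of Hermitian spaces, and then to pin these indices down by combining the known explicit theta lifts in the small-rank cases with the Gan--Gross--Prasad branching law for discrete series. First I would invoke Kudla's tower property \cite{kudla}: if $\theta_{r,s}(\pi)\ne0$, then $\theta_{r+1,s+1}(\pi)\ne0$. Since adjoining a hyperbolic plane sends $V_{r,s}$ to $V_{r+1,s+1}$, the Hermitian spaces of dimension $\equiv n+k_0\pmod2$ fall into Witt towers indexed by the signature difference $r-s$, and in each tower the set of members with nonzero theta lift of $\pi$ is upward closed. Each family in the statement traverses a single such tower: $\theta_{r_\lambda+l+1,s_\lambda+l}(\pi)$ and $\theta_{r_\lambda+l,s_\lambda+l}(\pi)$ run through the towers of difference $r_\lambda-s_\lambda+1$ and $r_\lambda-s_\lambda$, while $\theta_{r_\lambda+l+2t+1,s_\lambda+l}(\pi)$ and $\theta_{r_\lambda+l+2t,s_\lambda+l}(\pi)$ run through the towers of difference $r_\lambda-s_\lambda+2t+1$ and $r_\lambda-s_\lambda+2t$. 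It therefore suffices to show that the first occurrence index in each such tower is the one asserted: $l=0$ for the two $t$-free families, and for $t\ge1$ the least $l$ meeting the stated lower bound together with $\#\CC^+_\lambda(l+t)\le l$ and $\#\CC^-_\lambda(l+t)\le l$.

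Next I would anchor the computation at the small-rank end. One has $r_\lambda+s_\lambda=n$ when $k_\lambda\le0$ and $r_\lambda+s_\lambda=n-k_\lambda$ when $k_\lambda\ge0$, so the $t$-free families reach down to $m\le n+1$, where the theta lift of a discrete series representation is known explicitly: it is zero or again a discrete series for $m\le n$, determined by iterating Paul's computation of theta lifts in the equal and almost equal rank cases \cite{paul1,paul2}, and it is likewise determined by Paul for $m=n+1$. With the tower property this yields $\theta_{r_\lambda,s_\lambda}(\pi)\ne0$, resp.\ $\theta_{r_\lambda+1,s_\lambda}(\pi)\ne0$, hence the $t$-free assertions. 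At the opposite end, Li's non-vanishing theorem in the stable range \cite{li90} shows that every signature with $r$ and $s$ large enough occurs, bounding all first occurrence indices from above and supplying the top of the induction below.

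For the remaining families I would proceed by induction, moving one step at a time between neighbouring signatures by means of see-saw dual pairs and the doubling method. A see-saw attached to a codimension-one subspace $V'\subset V$ relates the non-vanishing of $\theta_V(\pi)$ to a branching problem for $\U(V')$, which---via the local Rallis inner product identity---reduces to the non-vanishing of a local doubling zeta integral for $\pi$; by the Gan--Gross--Prasad conjecture for discrete series of real unitary groups, proved by He \cite{he}, this is governed by an explicit sign condition on the Langlands parameters attached to $\pi$ and to the discrete series of $\U(V')$ paired with it. Carrying this out at each step and feeding in the inductive knowledge of the smaller signatures, one should extract precisely the lower bounds on $l$ and the inequalities $\#\CC^\pm_\lambda(l+t)\le l$.

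The main obstacle is the combinatorial match-up. One must show that the cancellation operation $\XX^{(j)}_\lambda\rightsquigarrow\XX^{(j+1)}_\lambda$ discards exactly those pairs of Harish-Chandra coordinates of $\lambda_0$ whose presence would otherwise force the Gan--Gross--Prasad sign condition to hold automatically low in the tower, so that the stable set $\XX^{(\infty)}_\lambda$ and the counting functions $\CC^\pm_\lambda(t)$ record the genuine obstruction, and then verify the resulting counting identities against the inductive step in every signature. This is delicate because the lifted representations can be highly singular, so one cannot treat their parameters as regular characters and must carry the signature data $(r,s)$ through the entire argument; getting the cancellation rule to agree on the nose with the definition of $\CC^\pm_\lambda$---rather than up to some uncontrolled correction---is where the real difficulty lies.
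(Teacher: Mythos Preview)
The paper does not prove this theorem: it is stated with attribution to Atobe \cite[Theorem 1.7]{atobe} and used as a black box, so there is no in-paper proof to compare against. Your sketch is broadly in the spirit of Atobe's actual argument---tower property to reduce to first occurrence, Paul's equal and almost-equal rank results to anchor the base, and an inductive step driven by the Gan--Gross--Prasad branching law for discrete series proved by He \cite{he}---so as an outline it is on the right track.

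Two points where your sketch drifts from what Atobe does. First, the see-saw used in the inductive step is on the skew-Hermitian side, not the Hermitian side: one embeds $W$ into an $(n+1)$-dimensional skew-Hermitian space $W'$ and studies when a suitable discrete series $\pi'$ of $\U(W')$ restricts to contain $\pi$; the GGP/He criterion is then applied to the pair $(\pi',\pi)$, and nonvanishing of $\theta_V(\pi')$ is fed back down via the see-saw. Your description with $V'\subset V$ and ``a branching problem for $\U(V')$'' has the roles reversed. Second, the local doubling zeta integral and the Rallis inner product formula are not the mechanism in Atobe's nonvanishing proof; they enter in \S\ref{s:nonvanish} of the present paper to prove Proposition~\ref{p:key}, which is a different (though related) statement about the functional $\ZZ_{r,s}(\pi)$. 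Conflating the two would not derail the overall strategy, but it misidentifies which tool does which job.
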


\begin{rem}
\label{r:nonvanishing}
To determine the nonvanishing of $\theta_{r,s}(\pi)$, we apply Theorem \ref{t:nonvanishing} directly if $r-r_\lambda \ge s-s_\lambda$, but after replacing $(r,s)$ by $(s,r)$ and $\pi$ by $\bar{\pi} \otimes (\chi_V \circ \det)$ if $r-r_\lambda < s-s_\lambda$.
Indeed, we have $\theta_{r,s}(\pi) \ne 0$ if and only if $\theta_{s,r}(\bar{\pi} \otimes (\chi_V \circ \det)) \ne 0$ by Lemma \ref{l:complex-conjugate-theta}, while we have
\[
 k_{\lambda'} = k_\lambda, \quad
 (r_{\lambda'}, s_{\lambda'}) = (s_\lambda, r_\lambda)
\] 
for the Harish-Chandra parameter $\lambda'$ of $\bar{\pi} \otimes (\chi_V \circ \det)$.
\end{rem}

\begin{rem}
\label{r:nonvanishing-li}
Theorem \ref{t:nonvanishing} is consistent with a result of Li \cite{li90} on the nonvanishing of the theta lifts of discrete series representations with sufficiently regular infinitesimal character.
More precisely, he showed that the theta lift $\theta_{r,s}(\pi)$ to $\U(r,s)$ of a discrete series representation $\pi$ of $\U(p,q)$ is nonzero if
\[
 m \ge n
\]
with $n=p+q$ and $m=r+s$, and the Harish-Chandra parameter $\lambda$ of $\pi$ is of the form
\[
 \lambda = (\alpha_1, \dots, \alpha_x, \beta_1, \dots, \beta_y, \gamma_1, \dots, \gamma_z, \delta_1, \dots, \delta_w) + \bigg( \frac{m_0}{2}, \dots, \frac{m_0}{2} \bigg)
\]
with
\begin{itemize}
\item $x+y = p$ and $z+w = q$;
\item $x+w \le r$ and $z+y \le s$,
\end{itemize}
and
\begin{equation}
\label{eq:suff-reg}
 \alpha_x, - \beta_1, \gamma_z, -\delta_1 \ge \frac{m-n+1}{2}.
\end{equation}
If $m=n$ (and hence $k_\lambda$ is a nonnegative even integer), then we have $n = r_\lambda + s_\lambda + k_\lambda$ and 
\[
 (r,s) = (r_\lambda + \tfrac{k_\lambda}{2}, s_\lambda + \tfrac{k_\lambda}{2}).
\]
This is consistent with Theorem \ref{t:nonvanishing}.
Thus assume that $m > n$.
Then we have $k_\lambda = -1$ or $0$ by \eqref{eq:suff-reg}, so that $r_\lambda \le r$ and $s_\lambda \le s$.
As in Remark \ref{r:nonvanishing}, we may assume that $r-r_\lambda \ge s-s_\lambda$, in which case we have
\[
 (r,s) =  
 \begin{cases}
  (r_\lambda+l+2t+1, s_\lambda+l) & \text{if $k_\lambda = -1$;} \\
  (r_\lambda+l+2t, s_\lambda+l) & \text{if $k_\lambda = 0$}
 \end{cases}
\]
for some nonnegative integers $l,t$.
To check the consistency with Theorem \ref{t:nonvanishing}, it suffices to show that
\[
 \CC_\lambda^\pm(l+t) = \varnothing.
\]
Indeed, if $(\xi, \pm 1) \in \CC_\lambda^\pm(l+t)$, then we have
\[
\begin{cases}
 1 \le |\xi| \le l+t & \text{if $k_\lambda = -1$;} \\
 \frac{1}{2} \le |\xi| \le l+t-\frac{1}{2} & \text{if $k_\lambda = 0$.}
\end{cases} 
\]
But we have $n = r_\lambda + s_\lambda$ and 
\[
 m = 
\begin{cases}
 r_\lambda + s_\lambda + 2l + 2t + 1 & \text{if $k_\lambda = -1$;} \\
 r_\lambda + s_\lambda + 2l + 2t & \text{if $k_\lambda = 0$,}
\end{cases} 
\]
so that 
\[
 \frac{m-n+1}{2} = 
\begin{cases}
 l+t+1 & \text{if $k_\lambda = -1$;} \\
 l+t+\frac{1}{2} & \text{if $k_\lambda = 0$.}
\end{cases} 
\]
Hence by \eqref{eq:suff-reg}, we have $\CC_\lambda^\pm(l+t) = \varnothing$.
\end{rem}

\section{Local $L$- and $A$-packets}
\label{s:packets}

In this section, we describe the representations in some local $L$- and $A$-packets for unitary groups explicitly.

\subsection{Parameters and packets}

Let $F$ be a local field of characteristic zero and $W_F$ the Weil group of $F$.
Put
\[
 L_F =
 \begin{cases}
  W_F & \text{if $F$ is archimedean;} \\
  W_F \times \SL_2(\C) & \text{if $F$ is nonarchimedean.}
 \end{cases}
\]
Let $E$ be a quadratic extension of $F$.
As in \cite[\S 8]{ggp1}, we may regard an $L$-parameter $\phi : L_F \rightarrow {}^L \U_n$ (resp.~an $A$-parameter $\phi : L_F \times \SL_2(\C) \rightarrow {}^L \U_n$) for $\U_n$, where $\U_n$ stands for the unitary group of any $n$-dimensional Hermitian or skew-Hermitian space over $E$ and ${}^L \U_n =  \GL_n(\C) \rtimes W_F$ is the $L$-group of $\U_n$, as an $n$-dimensional conjugate-self-dual representation of $L_E$ (resp.~$L_E \times \SL_2(\C)$) with sign $(-1)^{n-1}$.
For any such a parameter $\phi$, we denote by $S_\phi$ the component group of the centralizer of the image of $\phi$ in $\GL_n(\C)$ and by $\widehat{S}_\phi$ the group of characters of $S_\phi$.
Note that $S_\phi$ is a finitely generated free $\Z/2\Z$-module.
We denote by $\mathbbm{1}$ the trivial character of $S_\phi$.
For any positive integer $d$, we denote by $S_d$ the unique $d$-dimensional irreducible representation of $\SL_2(\C)$.

Fix $\varepsilon = \pm 1$.
Let $V$ be an $n$-dimensional $\varepsilon$-Hermitian space over $E$.
Then the local Langlands correspondence \cite{mok,kmsw,mr18} gives a partition of the set $\Irr \U(V)$ of equivalence classes of irreducible representations of $\U(V)$ into finite sets called $L$-packets:
\[
 \Irr \U(V) = \bigsqcup_\phi \Pi_\phi(\U(V)), 
\]
where $\phi$ runs over $L$-parameters for $\U_n$.
Moreover, given the choice of a Whittaker datum, there exists a canonical bijection
\[
 \bigsqcup_V \Pi_\phi(\U(V)) \longleftrightarrow \widehat{S}_\phi,
\]
where $V$ runs over isometry classes of $n$-dimensional $\varepsilon$-Hermitian spaces over $E$.
We denote by $\pi(\phi, \eta)$ the irreducible representation associated to $\eta \in \widehat{S}_\phi$.

To any $A$-parameter $\phi'$ for $\U_n$, Arthur's endoscopic classification \cite{mok,kmsw} assigns a finite set called an $A$-packet
\[
 \Pi_{\phi'}(\U(V))
\]
consisting of (possibly zero, possibly reducible) semisimple representations of $\U(V)$ of finite length.
Given the choice of a Whittaker datum, the representations in $\Pi_{\phi'}(\U(V))$ are indexed by $\widehat{S}_{\phi'}$.
We denote by $\sigma(\phi', \eta')$ the representation associated to $\eta' \in \widehat{S}_{\phi'}$.

\subsection{Whittaker data}
\label{ss:whit}

To index the representations in $L$- and $A$-packets as in the previous subsection, we take the following Whittaker datum in this paper.
If $n$ is odd, then there is a unique Whittaker datum.
Thus assume that $n$ is even.
Then by \cite[Proposition 12.1]{ggp1}, the Whittaker data are parametrized by $\mathrm{N}_{E/F}(E^\times)$-orbits of nontrivial additive characters of $E/F$ (resp.~$F$) if $\varepsilon = +1$ (resp.~$\varepsilon = -1$).
On the other hand, we have fixed an element $\delta \in E^\times$ such that $\Tr_{E/F}(\delta) = 0$ and a nontrivial additive character $\psi$ of $F$.
Define a nontrivial additive character $\psi^E$ of $E/F$ by $\psi^E(x) = \psi(\frac{1}{2} \Tr_{E/F}(\delta x))$.
Following \cite[\S 2.4]{gi2}, we take the Whittaker datum associated to $\psi^E$ (resp.~$\psi$) if $\varepsilon = +1$ (resp.~$\varepsilon = -1$).

If $F = \R$, we always assume that $\delta = \sqrt{-1}$ and $\psi(x) = e^{- 2 \pi \sqrt{-1} x}$.
Then our Whittaker datum agrees with the Whittaker datum $\mathfrak{w}_+$ as in \cite[\S A.3]{atobe}.
Moreover, by \cite[Theorem A.4]{atobe}, it also agrees with the Whittaker datum as in \cite[Remarque 4.5]{mr19}.

\subsection{The real case}
\label{ss:packets-real}

Suppose that $F = \R$.
For any $\kappa \in \frac{1}{2} \Z$, we define a character $\chi_\kappa$ of $W_\C = \C^\times$ by 
\[
 \chi_\kappa(z) = \left( \frac{z}{\sqrt{z \bar{z}}} \right)^{2 \kappa}.
\]

\subsubsection{Some $L$-packets}
\label{sss:packets-real-ds}

We consider the Vogan $L$-packet $\bigsqcup_{p+q=n} \Pi_\phi(\U(p,q))$, where $\phi$ is an $L$-parameter for $\U_n$ of the form
\[
 \phi = \chi_{\kappa_1} \oplus \dots \oplus \chi_{\kappa_n}
\]
with 
\begin{itemize}
\item $\kappa_i \in \Z + \frac{n-1}{2}$;
\item $\kappa_1 > \dots > \kappa_n$.
\end{itemize}
Then $S_\phi$ is a free $\Z / 2 \Z$-module of the form
\[
 S_\phi = (\Z / 2 \Z) e_1 \oplus \dots \oplus (\Z / 2 \Z) e_n,
\]
where $e_i$ corresponds to $\chi_{\kappa_i}$.
Let $\eta \in \widehat{S}_\phi$.
Put
\begin{alignat*}{2}
 I^+ & = \{ i \, | \, \eta(e_i) = (-1)^{i-1} \}, \quad & p & = \# I^+, \\
 I^- & = \{ i \, | \, \eta(e_i) = (-1)^i \}, \quad & q & = \# I^-,
\end{alignat*}
and write
\begin{itemize}
\item $\{ \kappa_i \, | \, i \in I^+ \} = \{ \lambda_1, \dots, \lambda_p \}$ with $\lambda_1 > \dots > \lambda_p$;
\item $\{ \kappa_i \, | \, i \in I^- \} = \{ \lambda_{p+1}, \dots, \lambda_n \}$ with $\lambda_{p+1} > \dots > \lambda_n$.
\end{itemize}
Note that
\[
 \eta(e_1 + \dots + e_n) = (-1)^{\frac{1}{2}(p-q)(p-q-1)}.
\]
Then by \cite[Theorem A.4]{atobe}, $\pi(\phi, \eta)$ is the discrete series representation of $\U(p,q)$ with Harish-Chandra parameter
\[
 \lambda = (\lambda_1, \dots, \lambda_n).
\]

\subsubsection{Some $A$-packets}
\label{sss:packets-real-coh}

We consider the $A$-packet $\Pi_{\phi'}(\U(r,s))$ with $r+s=m$, where $\phi'$ is an $A$-parameter for $\U_m$ of the form
\[
 \phi' = \chi_{\mu_1} \oplus \cdots \oplus \chi_{\mu_n} \oplus (\chi_{\mu_0} \boxtimes S_{m-n})
\]
with $n < m$ and
\begin{itemize}
\item $\mu_i \in \Z + \frac{m-1}{2}$ for $i \ne 0$;
\item $\mu_0 \in \Z + \frac{n}{2}$;
\item $\mu_1 > \dots > \mu_{i_0-1} > \mu_0 \ge \mu_{i_0} > \dots > \mu_n$.
\end{itemize}
Then $S_{\phi'}$ is a quotient of a free $\Z/2\Z$-module $\widetilde{S}_{\phi'}$ of the form
\[
 \widetilde{S}_{\phi'} = (\Z / 2 \Z) e'_1 \oplus \dots \oplus (\Z / 2 \Z) e'_n \oplus (\Z / 2 \Z) e_0',
\]
where $e'_i$ corresponds to $\chi_{\mu_i}$ (resp.~$\chi_{\mu_0} \boxtimes S_{m-n}$) if $i \ne 0$ (resp.~$i=0$).
In fact, we have $S_{\phi'} = \widetilde{S}_{\phi'}$ unless $\mu_0 = \mu_{i_0}$ and $m-n=1$, in which case we may identify $\widehat{S}_{\phi'}$ with the group of characters $\eta'$ of $\widetilde{S}_{\phi'}$ satisfying
\[
 \eta'(e_0') = \eta'(e_{i_0}').
\]
Let $\eta' \in \widehat{S}_{\phi'}$.
Define $\rr = (r_1,\dots,r_{n+1}), \ss = (s_1,\dots,s_{n+1})$ by
\begin{align*}
 r_i & =  
 \begin{cases}
  1 & \text{if $i<i_0$ and $\eta'(e_i') = (-1)^{i-1}$;} \\
  0 & \text{if $i<i_0$ and $\eta'(e_i') = (-1)^i$;} \\
  1 & \text{if $i>i_0$ and $\eta'(e_{i-1}') = (-1)^{i+m-n-2}$;} \\
  0 & \text{if $i>i_0$ and $\eta'(e_{i-1}') = (-1)^{i+m-n-1}$,}
 \end{cases} \\
 s_i & =  
 \begin{cases}
  0 & \text{if $i<i_0$ and $\eta'(e_i') = (-1)^{i-1}$;} \\
  1 & \text{if $i<i_0$ and $\eta'(e_i') = (-1)^i$;} \\
  0 & \text{if $i>i_0$ and $\eta'(e_{i-1}') = (-1)^{i+m-n-2}$;} \\
  1 & \text{if $i>i_0$ and $\eta'(e_{i-1}') = (-1)^{i+m-n-1}$,}
 \end{cases} \\
 r_{i_0} & = r - r_1 - \dots - r_{i_0-1} - r_{i_0+1} \dots - r_{n+1}, \\
 s_{i_0} & = s - s_1 - \dots - s_{i_0-1} - s_{i_0+1} \dots - s_{n+1}.
\end{align*}
Note that $r_{i_0} + s_{i_0} = m-n$.
Then by \cite[Th\'eor\`eme 1.1]{mr19}, the representation $\sigma(\phi', \eta')$ of $\U(r,s)$ is nonzero only if
\[
 r_{i_0}, s_{i_0} \ge 0
\]
and 
\begin{equation}
\label{eq:eta'}
 \eta'(e_1' + \dots + e_n' + e_0') = (-1)^{\frac{1}{2}(r-s)(r-s-1)}
\end{equation}
(see also Lemma \ref{l:eta'} below), in which case we have
\[
 \sigma(\phi', \eta') = A_\q(\lambda).
\]
Here $\q = \q_{\rr,\ss}$ is the $\theta$-stable parabolic subalgebra of $\u(r,s)$ as in \S \ref{ss:cohom} and $\lambda$ is the $1$-dimensional representation of $\l_{\rr,\ss}$ in the weakly fair range given by
\[
 \lambda = (\underbrace{\lambda_1,\dots,\lambda_1}_{r_1}, \dots, \underbrace{\lambda_{n+1},\dots,\lambda_{n+1}}_{r_{n+1}}, \underbrace{\lambda_1,\dots,\lambda_1}_{s_1}, \dots, \underbrace{\lambda_{n+1},\dots,\lambda_{n+1}}_{s_{n+1}})
\]
with 
\[
 \lambda_i = 
 \begin{cases}
  \mu_i - \frac{m-1}{2} + i-1 & \text{if $i < i_0$;} \\
  \mu_0 - \frac{n}{2} + i_0-1 & \text{if $i = i_0$;} \\
  \mu_{i-1} - \frac{m-1}{2} + i+m-n-2 & \text{if $i > i_0$.}
 \end{cases}
\]
(Note that there is a typo in \cite[(4-2)]{mr19}: $(t_i+a_i-N)/2 - a_{<i}$ should be $(t_i+a_i-N)/2 + a_{<i}$.)
Moreover, if two representations $\sigma(\phi', \eta_1'), \sigma(\phi', \eta_2')$ with $\eta_1', \eta_2' \in \widehat{S}_{\phi'}$ are nonzero and isomorphic, then we have $\eta_1' = \eta_2'$.

\begin{lem}
\label{l:eta'}
Let $\eta' \in \widehat{S}_{\phi'}$ and define $\rr = (r_1,\dots,r_{n+1}), \ss = (s_1,\dots,s_{n+1})$ as above.
Then $\eta'$ satisfies \eqref{eq:eta'} if and only if
\[
 \eta'(e_0') = (-1)^{r_{i_0}(i_0-1) + s_{i_0} i_0 + \frac{1}{2}(m-n)(m-n-1)}.
\]
\end{lem}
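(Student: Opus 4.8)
The plan is to compute $\eta'(e_1'+\dots+e_n')$ explicitly and then read off the value of $\eta'(e_0')$ that \eqref{eq:eta'} forces. First I would observe that for every $i\ne i_0$ one has $r_i+s_i=1$, so the four-case definition of $\rr,\ss$ is equivalent to the single statement that $\eta'(e_i')=(-1)^{i-1+s_i}$ when $i<i_0$ and $\eta'(e_{i-1}')=(-1)^{i+m-n-2+s_i}$ when $i>i_0$. Since $\{e_1',\dots,e_n'\}=\{e_i' : i<i_0\}\cup\{e_{i-1}' : i>i_0\}$, multiplying over the $n$ generators $e_1',\dots,e_n'$ and using $\sum_{i\ne i_0}s_i=s-s_{i_0}$ gives
\[
 \eta'(e_1'+\dots+e_n') = (-1)^{A+s-s_{i_0}}, \qquad
 A = \sum_{i<i_0}(i-1)+\sum_{i>i_0}(i+m-n-2).
\]
Summing the two arithmetic progressions and invoking the identity $\binom{a}{2}+\binom{b}{2}+ab=\binom{a+b}{2}$ with $a=i_0-1$, $b=n+1-i_0$ collapses this to $A=\binom{n}{2}+(m-n)(n+1-i_0)$.

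Next, since $\eta'(e_1'+\dots+e_n'+e_0')=\eta'(e_1'+\dots+e_n')\,\eta'(e_0')$, condition \eqref{eq:eta'} is equivalent to $\eta'(e_0')=(-1)^{\frac12(r-s)(r-s-1)+A+s-s_{i_0}}$, so the lemma reduces to the congruence
\[
 \tfrac12(r-s)(r-s-1)+A+s-s_{i_0} \equiv r_{i_0}(i_0-1)+s_{i_0}i_0+\tfrac12(m-n)(m-n-1) \pmod 2.
\]
To finish, I would rewrite $r_{i_0}(i_0-1)+s_{i_0}i_0=(m-n)(i_0-1)+s_{i_0}$ using $r_{i_0}+s_{i_0}=m-n$, and replace $\tfrac12(r-s)(r-s-1)$ by $\binom{m}{2}+s$ using $r-s=m-2s$ and the elementary congruence $\binom{x-2s}{2}\equiv\binom{x}{2}+s\pmod 2$. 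After substituting the value of $A$, cancelling the common $s$ and $s_{i_0}$ contributions, and discarding terms manifestly $\equiv 0\pmod 2$ such as $2i_0(m-n)$, the congruence becomes exactly the classical identity $\binom{m}{2}=\binom{n}{2}+\binom{m-n}{2}+n(m-n)$, which completes the proof.

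The whole argument is a short computation and I do not expect a genuine obstacle. The only point requiring care is the bookkeeping in the first step — tracking which generator of $\widetilde{S}_{\phi'}$ sits in position $i$ and the index shift $i\mapsto i-1$ for $i>i_0$ — and this is uniform, so no separate treatment of the degenerate case $\mu_0=\mu_{i_0}$, $m-n=1$ is needed, since throughout we only use $\eta'$ as a character of $\widetilde{S}_{\phi'}$. What makes it work is precisely that the many accumulated index shifts reorganize themselves, through $\binom{a+b}{2}=\binom a2+\binom b2+ab$, into the exponent $r_{i_0}(i_0-1)+s_{i_0}i_0+\tfrac12(m-n)(m-n-1)$.
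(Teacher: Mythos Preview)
Your proposal is correct and follows essentially the same approach as the paper: both compute $\eta'(e_1'+\dots+e_n')$ from the definition of $\rr,\ss$, then reduce the lemma to an elementary parity identity. The paper simplifies the exponent to $\tfrac12 n(2m-n-1)-r_{i_0}(i_0-1)-s_{i_0}i_0+s$ directly and then checks that adding $r_{i_0}(i_0-1)+s_{i_0}i_0+\tfrac12(m-n)(m-n-1)+\tfrac12(r-s)(r-s-1)$ gives an even integer, whereas you organize the same arithmetic around two applications of $\binom{a+b}{2}=\binom a2+\binom b2+ab$; the computations are equivalent.
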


\begin{proof}
It suffices to show that 
\[
 \eta'(e_1' + \dots + e_n') \cdot (-1)^{r_{i_0}(i_0-1) + s_{i_0} i_0 + \frac{1}{2}(m-n)(m-n-1)} \cdot (-1)^{\frac{1}{2}(r-s)(r-s-1)} = 1.
\]
We may write $\eta'(e_1' + \dots + e_n') = (-1)^j$, where
\begin{align*}
 j & = \sum_{i=1}^{i_0-1} (i-1+s_i) + \sum_{i=i_0+1}^{n+1} (i+m-n-2+s_i) \\
 & = \frac{1}{2}(i_0-1)(i_0-2) + \frac{1}{2}(n-i_0+1)(2m-n+i_0-2) + s-s_{i_0} \\
 & = \frac{1}{2}n(2m-n-1) - (m-n)(i_0-1) + s-s_{i_0} \\
 & = \frac{1}{2}n(2m-n-1) - r_{i_0}(i_0-1) - s_{i_0} i_0 + s.
\end{align*}
Then we have
\begin{align*}
 & j + r_{i_0}(i_0-1) + s_{i_0} i_0 + \frac{1}{2}(m-n)(m-n-1) + \frac{1}{2}(r-s)(r-s-1) \\
 & = \frac{1}{2}n(2m-n-1) + s + \frac{1}{2}(m-n)(m-n-1) + \frac{1}{2}(r-s)(r-s-1) \\
 & = \frac{1}{2} m (m-1) + s + \frac{1}{2}(r-s)(r-s-1) \\
 & = \frac{1}{2} (r+s) (r+s-1) + s + \frac{1}{2}(r-s)(r-s-1) \\
 & = r(r-1) + s(s+1) \\
 & \equiv 0 \bmod 2.
\end{align*}
This implies the assertion.
\end{proof}

\subsection{The nonarchimedean case}

Suppose that $F$ is nonarchimedean.
Recall that given a positive integer $n$, there are precisely two $n$-dimensional $\varepsilon$-Hermitian spaces $V_n^+$ and $V_n^-$ over $E$ (up to isometry).
Consider a parabolically induced representation
\[
 \Ind^{\U(V_n^\epsilon)}_P(\tau_1 \boxtimes \dots \boxtimes \tau_k \boxtimes \pi_0),
\]
where 
\begin{itemize}
\item $P$ is a parabolic subgroup of $\U(V_n^\epsilon)$ with Levi component $\GL_{n_1}(E) \times \dots \times \GL_{n_k}(E) \times \U(V_{n_0}^\epsilon)$;
\item $\tau_i$ is an irreducible essentially tempered representation of $\GL_{n_i}(E)$;
\item $\pi_0$ is an irreducible tempered representation of $\U(V_{n_0}^\epsilon)$.
\end{itemize}
If the representation above is a standard module, we denote its unique irreducible quotient by
\[
 J(\tau_1, \dots, \tau_k, \pi_0).
\]

\subsubsection{Some $L$-packets}

We consider the Vogan $L$-packet $\Pi_\phi(\U(V_n^+)) \sqcup \Pi_\phi(\U(V_n^-))$, where $\phi$ is an $L$-parameter for $\U_n$ of the form
\[
 \phi = \chi_1 \oplus \dots \oplus \chi_n
\]
with (not necessarily distinct) conjugate-self-dual characters $\chi_i$ of $E^\times$ with sign $(-1)^{n-1}$.
Then $\pi(\phi,\mathbbm{1})$ is an irreducible tempered representation of $\U(V_n^+)$.
For more properties, we refer the reader to \cite[\S 2.5]{gi2}.

\subsubsection{Some $A$-packets}

We consider the $A$-packet $\Pi_{\phi'}(\U(V_m^+))$, where $\phi'$ is an $A$-parameter for $\U_m$ of the form
\[
 \phi' = \chi_1 \oplus \dots \oplus \chi_n \oplus (\chi_0 \boxtimes S_{m-n})
\]
with $n < m$ and (not necessarily distinct) conjugate-self-dual characters $\chi_i$ of $E^\times$ with sign
\[
\begin{cases}
 (-1)^{m-1} & \text{if $i \ne 0$;} \\
 (-1)^n & \text{if $i=0$.}
\end{cases} 
\]
Then by \cite[\S 4.1]{mr18}, the representation $\sigma(\phi', \eta')$ of $\U(V_m^+)$ with $\eta' \in \widehat{S}_{\phi'}$ is either zero or irreducible.
Moreover, if two representations $\sigma(\phi', \eta_1'), \sigma(\phi', \eta_2')$ with $\eta_1', \eta_2' \in \widehat{S}_{\phi'}$ are nonzero and isomorphic, then we have $\eta_1' = \eta_2'$.

\begin{lem}
\label{l:local-Apacket}
\begin{enumerate}
\item 
If $m \equiv n \bmod 2$, then we have
\[
 \sigma(\phi', \mathbbm{1}) = J(\chi_0 | \cdot |^{\frac{1}{2}(m-n-1)}, \chi_0 | \cdot |^{\frac{1}{2}(m-n-3)}, \dots, \chi_0 | \cdot |^{\frac{1}{2}}, \pi(\phi_0, \mathbbm{1}))
\]
with an $L$-parameter $\phi_0 = \chi_1 \oplus \dots \oplus \chi_n$ for $\U_n$.
\item
If $m \not \equiv n \bmod 2$, then we have
\[
 \sigma(\phi', \mathbbm{1}) = J(\chi_0 | \cdot |^{\frac{1}{2}(m-n-1)}, \chi_0 | \cdot |^{\frac{1}{2}(m-n-3)}, \dots, \chi_0 | \cdot |^1, \pi(\phi_1, \mathbbm{1}))
\]
with an $L$-parameter $\phi_1 = \chi_1 \oplus \dots \oplus \chi_n \oplus \chi_0$ for $\U_{n+1}$.
\end{enumerate}
\end{lem}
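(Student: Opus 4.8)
The plan is to identify $\sigma(\phi',\mathbbm{1})$ with the Langlands quotient of the standard module attached to the Langlands parameter underlying the $A$-parameter $\phi'$, and then to unwind that parameter into explicit Langlands data. First I would recall from \cite[\S 4.1]{mr18} Moeglin's description of the $p$-adic $A$-packet $\Pi_{\phi'}(\U(V_m^+))$: its members are the (possibly zero) irreducible representations indexed by $\widehat{S}_{\phi'}$ with respect to the normalization fixed by our Whittaker datum (see \S\ref{ss:whit} and \cite[\S 2.5]{gi2}), and the member attached to $\mathbbm{1}$ is the Langlands quotient $J(\phi_{\phi'})$ of the standard module associated to the Langlands parameter $\phi_{\phi'}$ obtained from $\phi'$ by composing with $w \mapsto (w, \diag(|w|^{1/2}, |w|^{-1/2}))$. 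In particular $\sigma(\phi',\mathbbm{1})$ is nonzero, and the lemma reduces to a bookkeeping identification of this Langlands quotient.

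Next I would compute $\phi_{\phi'}$. Restricting the summand $\chi_0 \boxtimes S_{m-n}$ along $w \mapsto (w, \diag(|w|^{1/2},|w|^{-1/2}))$ replaces it by $\bigoplus_{j=0}^{m-n-1} \chi_0|\cdot|^{\frac{m-n-1}{2}-j}$, so
\[
 \phi_{\phi'} = \chi_1 \oplus \dots \oplus \chi_n \oplus \chi_0|\cdot|^{\frac{m-n-1}{2}} \oplus \chi_0|\cdot|^{\frac{m-n-3}{2}} \oplus \dots \oplus \chi_0|\cdot|^{-\frac{m-n-1}{2}}.
\]
Since $\chi_0$ is conjugate-self-dual, the characters $\chi_0|\cdot|^s$ and $\chi_0|\cdot|^{-s}$ occurring here are conjugate-dual to one another; hence in the Langlands classification of $\U(V_m^+)$ the data of $\phi_{\phi'}$ consist of the summands with positive exponent, which form the $\GL$-part, together with the summands of exponent $0$ (there is exactly one, namely $\chi_0$, precisely when $m-n$ is odd) and $\chi_1 \oplus \dots \oplus \chi_n$, which together form the tempered part. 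Thus if $m \equiv n \bmod 2$ the positive-exponent characters are $\chi_0|\cdot|^{(m-n-1)/2}, \dots, \chi_0|\cdot|^{1/2}$ and the tempered part is $\phi_0 = \chi_1 \oplus \dots \oplus \chi_n$, while if $m \not\equiv n \bmod 2$ they are $\chi_0|\cdot|^{(m-n-1)/2}, \dots, \chi_0|\cdot|^{1}$ and the tempered part is $\phi_1 = \chi_1 \oplus \dots \oplus \chi_n \oplus \chi_0$.

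It then remains to check that the resulting induced representations are genuine standard modules with irreducible tempered inducing data: the exponents $\tfrac{m-n-1}{2} > \tfrac{m-n-3}{2} > \dots$ are strictly decreasing and positive, and $\phi_0$ (resp.\ $\phi_1$) is a tempered $L$-parameter for $\U_n$ (resp.\ $\U_{n+1}$), since each $\chi_i$ is conjugate-self-dual with sign $(-1)^{m-1}$, which equals $(-1)^{n-1}$ (resp.\ $(-1)^n$) by the parity of $m-n$, while $\chi_0$ has sign $(-1)^n$; hence $\pi(\phi_0,\mathbbm{1})$ (resp.\ $\pi(\phi_1,\mathbbm{1})$) is a well-defined irreducible tempered representation of $\U(V_n^+)$ (resp.\ $\U(V_{n+1}^+)$). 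Passing to Langlands quotients then yields the two displayed formulas. The step I expect to be the main obstacle is the first one, namely verifying that the member of $\Pi_{\phi'}(\U(V_m^+))$ indexed by $\mathbbm{1}$ --- rather than by Arthur's sign character $\epsilon_{\phi'}$, or by $\mathbbm{1}$ twisted by it --- is precisely $J(\phi_{\phi'})$; this requires matching the normalization of the parametrization $\widehat{S}_{\phi'} \leftrightarrow \Pi_{\phi'}$ in \cite{mr18} against the Whittaker datum fixed in \S\ref{ss:whit} and the conventions of \cite{gi2, ggp1} used throughout the paper, after which everything else is elementary manipulation of parameters.
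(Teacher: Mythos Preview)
Your approach is essentially the same as the paper's: both identify $\sigma(\phi',\mathbbm{1})$ with the Langlands quotient attached to the $L$-parameter $\phi_{\phi'}$ underlying $\phi'$, then unwind $\phi_{\phi'}$ into the explicit Langlands data displayed in the lemma. The paper's proof is a one-line citation --- it invokes \cite[Proposition~8.4.1]{mok} together with the irreducibility of $\sigma(\phi',\mathbbm{1})$ (the latter already extracted from \cite[\S4.1]{mr18} just before the lemma). The obstacle you correctly flag --- that it is $\mathbbm{1}$, and not some twist by Arthur's sign character, that indexes $J(\phi_{\phi'})$ --- is precisely what Mok's proposition handles; so your instinct about where the content lies is right, but the reference resolving it is \cite{mok} rather than \cite{mr18}.
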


\begin{proof}
The assertion follows from \cite[Proposition 8.4.1]{mok} and the irreducibility of $\sigma(\phi', \mathbbm{1})$.
\end{proof}

\subsection{The split case}

We also need to consider the case when $F$ is nonarchimedean and $E = F \times F$.
Recall that given a positive integer $n$, there is a unique $n$-dimensional $\varepsilon$-Hermitian space $V_n^+ = \VV_n \otimes_F E$ over $E$ (up to isometry), where $\VV_n$ is an $n$-dimensional vector space over $F$.
Via the isomorphism $\U(V_n^+) \cong \GL(\VV_n)$ induced by the first projection, we may regard an $L$-parameter $\phi : L_F \rightarrow {}^L \U_n$ (resp.~an $A$-parameter $\phi : L_F \times \SL_2(\C) \rightarrow {}^L \U_n$) for $\U_n$ as an $n$-dimensional representation of $L_F$ (resp.~$L_F \times \SL_2(\C)$).
For any such a parameter $\phi$, the component group $S_\phi$ is always trivial.

Let $\phi$ and $\phi'$ be $L$- and $A$-parameters for $\U_n$ and $\U_m$, respectively, of the form
\begin{align*}
 \phi & = \chi_1 \oplus \dots \oplus \chi_n, \\
 \phi' & = \chi_1 \oplus \dots \oplus \chi_n \oplus (\chi_0 \boxtimes S_{m-n})
\end{align*}
with $n < m$ and (not necessarily distinct) unitary characters $\chi_i$ of $F^\times$.
We denote by $\pi(\phi, \mathbbm{1})$ and $\sigma(\phi', \mathbbm{1})$ the unique representations of $\U(V_n^+) \cong \GL(\VV_n)$ and $\U(V_m^+) \cong \GL(\VV_m)$ in the $L$- and $A$-packets $\Pi_\phi(\U(V_n^+))$ and $\Pi_{\phi'}(\U(V_m^+))$, respectively.
Then we have
\begin{align*}
 \pi(\phi, \mathbbm{1}) & = \Ind^{\GL(\VV_n)}_{\mathcal{B}}(\chi_1 \boxtimes \dots \boxtimes \chi_n), \\
 \sigma(\phi', \mathbbm{1}) & = \Ind^{\GL(\VV_m)}_{\mathcal{P}}(\chi_1 \boxtimes \dots \boxtimes \chi_n \boxtimes (\chi_0 \circ \det)), 
\end{align*}
where $\mathcal{B}$ is a Borel subgroup of $\GL(\VV_n)$ and $\mathcal{P}$ is a parabolic subgroup of $\GL(\VV_m)$ with Levi component $(F^\times)^n \times \GL_{m-n}(F)$.
Note that the parabolically induced representations on the right-hand side are irreducible by \cite[Theorem 4.2]{zel}.

\section{Proof of the main theorem}
\label{s:proof}

In this section, we prove Theorem \ref{t:main}.

\subsection{Reduction to Theorem \ref{t:main}(\ref{item:main1})}

We first reduce Theorem \ref{t:main}\eqref{item:main2} to Theorem \ref{t:main}\eqref{item:main1}.
Thus we consider the theta lifting from $\U(p,q)$ to $\U(r,s)$ with $p+q=n$ and $r+s=m$ in the case
\[
 m \le n.
\]
If $m=n$, then Theorem \ref{t:main}\eqref{item:main2} follows from a result of Li \cite{li90} (see also \cite{paul1}).
Hence we may assume that $m < n$.

Let $\pi$ be a discrete series representation of $\U(p,q)$ with Harish-Chandra parameter $\lambda$.
We assume that its theta lift $\theta_{r,s}(\pi)$ to $\U(r,s)$ relative to $(\chi_V,\chi_W,\psi)$ is nonzero, where we take the data $(\chi_V,\chi_W,\psi)$ given in \S \ref{ss:main-theorem-setup}.
Then by Theorem \ref{t:nonvanishing}, we have $k_\lambda \ge 0$ and $(r,s) = (r_\lambda + l, s_\lambda + l)$ for some nonnegative integer $l$, where $k_\lambda, r_\lambda, s_\lambda$ are as defined in \S \ref{ss:nonvanishing}.
Put $k = n - m$.
Since $n = r_\lambda + s_\lambda + k_\lambda$ and $m = r_\lambda + s_\lambda + 2l$, we have 
\[
 k \le k_\lambda.
\]
Hence we may write either
\[
 \lambda = \bigg(\alpha_1, \dots, \alpha_x, \frac{k-1}{2}, \frac{k-3}{2}, \dots, -\frac{k-1}{2}, \beta_1, \dots, \beta_y, \gamma_1, \dots, \gamma_z, \delta_1, \dots, \delta_w\bigg) + \bigg( \frac{m_0}{2}, \dots, \frac{m_0}{2} \bigg)
\]
or 
\[
 \lambda = \bigg(\alpha_1, \dots, \alpha_x, \beta_1, \dots, \beta_y, \gamma_1, \dots, \gamma_z, \frac{k-1}{2}, \frac{k-3}{2}, \dots, -\frac{k-1}{2}, \delta_1, \dots, \delta_w \bigg) + \bigg( \frac{m_0}{2}, \dots, \frac{m_0}{2} \bigg)
\]
as in Theorem \ref{t:main}\eqref{item:main2}.

We only consider the first case; the second case is similar.
Let $\sigma$ be the discrete series representation of $\U(r,s)$ with Harish-Chandra parameter
\[
 \lambda' = (\alpha_1, \dots, \alpha_x, \delta_1, \dots, \delta_w, \gamma_1, \dots, \gamma_z, \beta_1, \dots, \beta_y) + \bigg( \frac{n_0}{2}, \dots, \frac{n_0}{2} \bigg).
\]
Since
\[
 \alpha_x, -\beta_1, \gamma_z, -\delta_1 \ge \frac{k+1}{2},  
\]
the theta lift $\theta_{p,q}(\sigma)$ to $\U(p,q)$ is nonzero by Remark \ref{r:nonvanishing-li}.
Hence we have 
\[
 \theta_{p,q}(\sigma) = A_{\q'}(\lambda'')
\]
if we admit Theorem \ref{t:main}\eqref{item:main1}, where $\q' = \q(\xx')$ is associated to
\[
 \xx' = (\alpha_1, \dots, \alpha_x, \underbrace{0, \dots, 0}_k, \beta_1, \dots, \beta_y, \gamma_1, \dots, \gamma_z, \delta_1, \dots, \delta_w)
\]
and $\lambda''$ is given by
\[
 \lambda'' = (\alpha_1'', \dots, \alpha_x'', \epsilon'', \dots, \epsilon'', \beta_1'', \dots, \beta_y'', \gamma_1'', \dots, \gamma_z'', \delta_1'', \dots, \delta_w'')
 + \bigg(\frac{m_0}{2}, \dots, \frac{m_0}{2}\bigg)
\]
with 
\begin{align*}
 \alpha_i'' & = \alpha_i - \frac{n+1}{2} + i + \# \{ k \, | \, \gamma_k > \alpha_i \}, \\ 
 \beta_j'' & = \beta_j + \frac{n-1}{2} + j-y - \# \{ l \, | \, \delta_l < \beta_j \}, \\
 \gamma_k'' & = \gamma_k -\frac{n+1}{2} + k + \# \{ i \, | \, \alpha_i > \gamma_k \}, \\
 \delta_l'' & = \delta_l + \frac{n-1}{2} + l-w - \# \{ j \, | \, \beta_j < \delta_l \}, \\
 \epsilon'' & = x+z - \frac{m}{2}.
\end{align*}
Since $\l(\xx')$ is contained in $\k$ (where $\k$ is the complexified Lie algebra of $K$) and $\lambda''$ is in the good range, $A_{\q'}(\lambda'')$ is the discrete series representation of $\U(p,q)$ with Harish-Chandra parameter $\lambda'' + \rho(\Psi)$, where $\Psi$ is the positive system of $\Delta$ determined by
\begin{itemize}
\item $\Delta_c^+ \subset \Psi$;
\item $\langle \alpha, \lambda'' \rangle > 0$ for all $\alpha \in \Psi$
\end{itemize}
and $\rho(\Psi)$ is half the sum of the roots in $\Psi$.
Moreover, we have $\lambda'' + \rho(\Psi) = \lambda$, so that $A_{\q'}(\lambda'') = \pi$.
This shows that $\theta_{p,q}(\sigma) = \pi$ and hence $\theta_{r,s}(\pi) = \sigma$, which reduces Theorem \ref{t:main}\eqref{item:main2} to Theorem \ref{t:main}\eqref{item:main1}.

The rest of this section is devoted to the proof of Theorem \ref{t:main}\eqref{item:main1}.

\subsection{Local theta lifting}

Let $F$ be a local field of characteristic zero and $E$ an \'etale quadratic algebra over $F$.
We consider the theta lifting from $\U(W)$ to $\U(V)$, where $W$ is an $n$-dimensional skew-Hermitian space over $E$ and $V$ is an $m$-dimensional Hermitian space over $E$ with $m>n$.

\subsubsection{The real case}
\label{sss:local-theta-real}

Suppose that $F=\R$ and $E=\C$.
Let $(p,q)$ and $(r,s)$ be the signatures of $W$ and $V$, respectively.
We take the data $(\chi_V, \chi_W, \psi)$ given in \S \ref{ss:main-theorem-setup}.

Let $\pi$ be a discrete series representation of $\U(p,q)$ with Harish-Chandra parameter $\lambda$.
Write 
\[
 \lambda = (\alpha_1, \dots, \alpha_x, \beta_1, \dots, \beta_y, \gamma_1, \dots, \gamma_z, \delta_1, \dots, \delta_w) + \bigg( \frac{m_0}{2}, \dots, \frac{m_0}{2} \bigg)
\]
with 
\begin{itemize}
\item $\alpha_i, \gamma_j > 0$ and $\beta_i, \delta_j \le 0$;
\item $x+y = p$ and $z+w = q$.
\end{itemize}

\begin{lem}
\label{l:local-theta-xyzw}
If the theta lift $\theta_{V,W,\chi_V,\chi_W,\psi}(\pi)$ to $\U(r,s)$ is nonzero, then we have
\[
 x+w \le r, \quad
 z+y \le s.
\]
\end{lem}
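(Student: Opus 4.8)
The plan is to deduce the inequalities from Atobe's nonvanishing criterion (Theorem~\ref{t:nonvanishing}), after comparing the decomposition of $\lambda$ used in the statement with the decomposition used in \S\ref{ss:nonvanishing} to define the invariants $k_\lambda, r_\lambda, s_\lambda$ attached to $\pi$.

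The first step is this comparison. Write $\lambda=\lambda_0+(\tfrac{m_0}{2},\dots,\tfrac{m_0}{2})$ as in \S\ref{ss:nonvanishing}. The decomposition in the statement is obtained by splitting each of the two strictly decreasing blocks of $\lambda_0$ (the first $p$ entries and the last $q$ entries) at the place where its entries pass from positive to nonpositive, whereas the decomposition in \S\ref{ss:nonvanishing} first removes from one of these two blocks the run of consecutive values $\{\tfrac{k_\lambda-1}{2},\dots,-\tfrac{k_\lambda-1}{2}\}$, of length $\max(k_\lambda,0)$. Because $\pi$ is a discrete series representation, the entries of $\lambda$, hence of $\lambda_0$, are pairwise distinct; in particular $\lambda_0$ has at most one entry equal to $0$, and when $k_\lambda\ge 1$ that entry lies in the removed run. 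Passing from the \S\ref{ss:nonvanishing} decomposition back to the one in the statement redistributes the $\lfloor k_\lambda/2\rfloor$ positive members of the run into the $\alpha$'s (or the $\gamma$'s) and its $\lceil k_\lambda/2\rceil$ remaining, nonpositive members into the $\beta$'s (or the $\delta$'s). Carrying out this bookkeeping, I would obtain
\[
 x+w=r_\lambda+\sigma^+,\qquad z+y=s_\lambda+\sigma^-,
\]
where $\sigma^\pm$ are nonnegative integers with $\sigma^++\sigma^-=\max(k_\lambda,0)$ and $\max(\sigma^+,\sigma^-)\le\lceil\max(k_\lambda,0)/2\rceil\le\max(k_\lambda,0)$; along the way one also gets $n=r_\lambda+s_\lambda+\max(k_\lambda,0)$.

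In the second step I feed this into Theorem~\ref{t:nonvanishing}. Suppose first $r-r_\lambda\ge s-s_\lambda$. If $k_\lambda\ge 0$, the hypothesis $\theta_{r,s}(\pi)\ne 0$ then forces $(r-r_\lambda,\,s-s_\lambda)$ to be either $(l,l)$ with $l\ge 0$, in which case $m>n$ together with $n=r_\lambda+s_\lambda+k_\lambda$ gives $2l>k_\lambda$ and hence $l\ge\lceil(k_\lambda+1)/2\rceil$, or $(l+2t,l)$ with $t\ge 1$ and $l\ge k_\lambda$; if $k_\lambda=-1$, it forces $(r-r_\lambda,\,s-s_\lambda)$ to be $(l+1,l)$ or $(l+2t+1,l)$ with $l\ge 0$ and $t\ge 1$. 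In each case $r-r_\lambda\ge\sigma^+$ and $s-s_\lambda\ge\sigma^-$ by the bounds on $\sigma^\pm$, so $r\ge x+w$ and $s\ge z+y$. If instead $r-r_\lambda<s-s_\lambda$, then by Remark~\ref{r:nonvanishing} I would run the same argument for $\theta_{s,r}(\bar{\pi}\otimes(\chi_V\circ\det))\ne 0$, using $k_{\lambda'}=k_\lambda$ and $(r_{\lambda'},s_{\lambda'})=(s_\lambda,r_\lambda)$ for the Harish-Chandra parameter $\lambda'$ of $\bar{\pi}\otimes(\chi_V\circ\det)$: now for the pair $(s,r)$ the departure $s-s_\lambda$ strictly exceeds the departure $r-r_\lambda$, so Theorem~\ref{t:nonvanishing} is in an unbalanced case, giving $r-r_\lambda\ge\max(k_\lambda,0)\ge\sigma^+$ and $s-s_\lambda\ge\max(k_\lambda,0)+1\ge\sigma^-+1$, and again $r\ge x+w$, $s\ge z+y$.

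The main obstacle is the first step: one has to track exactly how the run of length $k_\lambda$ from \S\ref{ss:nonvanishing} is redistributed when the two blocks of $\lambda_0$ are instead split at their sign changes, and verify that the possible entry $0$ sits inside that run --- this is where regularity of the infinitesimal character of $\pi$ is used, and it is what keeps the imbalance $|\sigma^+-\sigma^-|$ at most $1$. Granting that, the second step only uses the list of admissible pairs $(r,s)$ in Theorem~\ref{t:nonvanishing}; in particular the finer conditions $\#\CC^\pm_\lambda(l+t)\le l$ there are not needed for this lemma.
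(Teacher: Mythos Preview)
Your proof is correct and follows essentially the same route as the paper's: both compare the sign-splitting decomposition of $\lambda_0$ used in the lemma with the decomposition in \S\ref{ss:nonvanishing}, express $(x+w,\,z+y)$ as $(r_\lambda+\sigma^+,\,s_\lambda+\sigma^-)$ with $\sigma^\pm$ bounded in terms of $k_\lambda$, and then invoke Theorem~\ref{t:nonvanishing} together with the ambient hypothesis $m>n$. The paper records the comparison more tersely as an explicit case formula for $(r_\lambda,s_\lambda)$ in terms of $(x+w,z+y)$ and the parity of $k_\lambda$, and reduces immediately to the case $r-r_\lambda\ge s-s_\lambda$ via Remark~\ref{r:nonvanishing}, but the substance is the same.
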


\begin{proof}
Let $k_\lambda, r_\lambda, s_\lambda$ be the integers as defined in \S \ref{ss:nonvanishing}.
As in Remark \ref{r:nonvanishing}, we may assume that $r - r_\lambda \ge s - s_\lambda$.
Since $m > n$ and 
\[
 (r_\lambda, s_\lambda) = 
\begin{cases}
 (x+w, z+y) & \text{if $k_\lambda = -1$}; \\
 (x+w-\frac{k_\lambda}{2}, z+y-\frac{k_\lambda}{2}) & \text{if $k_\lambda \ge 0$ and $k_\lambda$ is even}; \\
 (x+w-\frac{k_\lambda \pm 1}{2}, z+y-\frac{k_\lambda \mp 1}{2}) & \text{if $k_\lambda \ge 0$ and $k_\lambda$ is odd,}
\end{cases}
\]
the assertion follows from Theorem \ref{t:nonvanishing}.
\end{proof}

Define $L$- and $A$-parameters $\phi$ and $\phi'$ for $\U_n$ and $\U_m$, respectively, by 
\begin{align*}
 \phi & = \chi_{\kappa_1} \oplus \dots \oplus \chi_{\kappa_n}, \\
 \phi' & = \chi_{\kappa_1} \chi_V^{-1} \chi_W \oplus \dots \oplus \chi_{\kappa_n} \chi_V^{-1} \chi_W \oplus (\chi_W \boxtimes S_{m-n}), 
\end{align*}
where 
\begin{itemize}
\item $\kappa_1 > \dots > \kappa_{i_0-1} > \frac{m_0}{2} \ge \kappa_{i_0} > \dots > \kappa_n$;
\item $\{ \kappa_1 - \frac{m_0}{2}, \dots, \kappa_{i_0-1} - \frac{m_0}{2} \} = \{ \alpha_1, \dots, \alpha_x, \gamma_1, \dots, \gamma_z \}$;
\item $\{ \kappa_{i_0} - \frac{m_0}{2}, \dots, \kappa_n - \frac{m_0}{2} \} = \{ \beta_1, \dots, \beta_y, \delta_1, \dots, \delta_w \}$;
\item $i_0 = x+z+1$.
\end{itemize}
Then $\pi$ belongs to the $L$-packet $\Pi_\phi(\U(p,q))$.
As in \S \ref{ss:packets-real}, we write $S_\phi$ as 
\[
 S_\phi = (\Z/2\Z) e_1 \oplus \dots \oplus (\Z/2 \Z) e_n
\]
and $S_{\phi'}$ as a quotient of 
\[
 \widetilde{S}_{\phi'} = (\Z/2\Z) e'_1 \oplus \dots \oplus (\Z/2 \Z) e'_n \oplus (\Z/2 \Z) e'_0.
\]

\begin{lem}
\label{l:local-theta-1}
Assume that $x+w \le r$, $z+y \le s$, and
\[
 \alpha_x, - \beta_1, \gamma_z, -\delta_1 \ge \frac{m-n+1}{2}.
\]
Let $\eta$ be the character of $S_\phi$ associated to $\pi$ as in \S \ref{sss:packets-real-ds}, so that $\pi = \pi(\phi, \eta)$.
Then we have 
\[
 \theta_{V,W,\chi_V,\chi_W,\psi}(\pi(\phi, \eta)) = \sigma(\phi', \eta'), 
\]
where $\eta'$ is the character of $S_{\phi'}$ given by 
\[
 \eta'(e_i') = \zeta_i \times 
 \begin{cases}
  \eta(e_i) & \text{if $i \ne 0$;} \\
  \epsilon(V) \cdot \epsilon(W) & \text{if $i = 0$}
 \end{cases}
\]
with
\[
 \zeta_i = 
 \begin{cases}
  +1 & \text{if $m \equiv n \bmod 2$ and $0 < i < i_0$}; \\
  -1 & \text{if $m \equiv n \bmod 2$ and $i \ge i_0$;} \\
  +1 & \text{if $m \not \equiv n \bmod 2$ and $i \ne 0$}
 \end{cases}
\]
and 
\[
 \zeta_0 = \zeta_1 \cdots \zeta_n.
\]
(Note that $S_{\phi'} = \widetilde{S}_{\phi'}$ by assumption.)
\end{lem}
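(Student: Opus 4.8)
The plan is to combine Li's explicit computation of the theta lift with Moeglin--Renard's description of the $A$-packet $\Pi_{\phi'}(\U(r,s))$ recalled in \S\ref{sss:packets-real-coh}. Under the present hypotheses we are in the situation of the Remark following Theorem \ref{t:main}, so the result of Li \cite{li90} gives that $\theta_{V,W,\chi_V,\chi_W,\psi}(\pi(\phi,\eta))$ is nonzero and
\[
 \theta_{V,W,\chi_V,\chi_W,\psi}(\pi(\phi,\eta)) = A_{\q(\xx)}(\lambda'),
\]
where $\xx$ and $\lambda'$ are the explicit $\theta$-stable data of Theorem \ref{t:main}(\ref{item:main1}); the hypothesis $\alpha_x,-\beta_1,\gamma_z,-\delta_1 \ge \frac{m-n+1}{2}$ puts $\lambda'$ in the good range. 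On the other hand, by \cite[Th\'eor\`eme 1.1]{mr19} the nonzero members of $\Pi_{\phi'}(\U(r,s))$ are precisely the representations $A_{\q_{\rr,\ss}}(\lambda_{\mathrm{MR}})$ attached as in \S\ref{sss:packets-real-coh} to the characters $\eta' \in \widehat{S}_{\phi'}$, and they are pairwise non-isomorphic. So it suffices to match the pair $(\q(\xx),\lambda')$ coming from Li against $(\q_{\rr,\ss},\lambda_{\mathrm{MR}})$ for a suitable $\eta'$ and to check that this $\eta'$ is the one described in the lemma; uniqueness of $\eta'$ is then automatic.

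First I would unwind $\phi'$. Since $\chi_{\kappa_i}\chi_V^{-1}\chi_W = \chi_{\kappa_i - \frac{m_0}{2} + \frac{n_0}{2}}$ and $\chi_W = \chi_{n_0/2}$, the parameters $\mu_i$ of $\phi'$ in the notation of \S\ref{sss:packets-real-coh} are $\mu_i = \kappa_i - \frac{m_0}{2} + \frac{n_0}{2}$ for $i \ne 0$ and $\mu_0 = \frac{n_0}{2}$; the integrality and the ordering $\mu_1 > \dots > \mu_{i_0-1} > \mu_0 \ge \mu_{i_0} > \dots > \mu_n$ with $i_0 = x+z+1$ follow from the defining properties of $\kappa_1,\dots,\kappa_n$ in \S\ref{sss:local-theta-real}, and the strict inequality $\mu_0 > \mu_{i_0}$ (which gives $S_{\phi'} = \widetilde{S}_{\phi'}$) holds because $-\beta_1,-\delta_1 \ge \frac{m-n+1}{2} > 0$ forces $\kappa_{i_0} < \frac{m_0}{2}$. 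Next I would compute $\q_{\rr,\ss}$ from the Moeglin--Renard recipe and compare it with $\q(\xx)$: in $\xx$ the first "half" consists of the entries $\alpha_1,\dots,\alpha_x$, then $r-x-w$ copies of $\epsilon$, then $\delta_1,\dots,\delta_w$, while the second "half" consists of $\gamma_1,\dots,\gamma_z$, then $s-z-y$ copies of $\epsilon$, then $\beta_1,\dots,\beta_y$. Grouping the entries of $\xx$ into constant runs in decreasing order and reading off the block sizes shows $\q(\xx) = \q_{\rr,\ss}$ with $r_{i_0} = r-x-w \ge 0$, $s_{i_0} = s-z-y \ge 0$ (in particular $r_{i_0}+s_{i_0}=m-n$, so the nonvanishing condition of \S\ref{sss:packets-real-coh} is met), and, for $j \ne i_0$, $r_j = 1$ exactly when the value $\mu_j$ (i.e.\ $\kappa_j - \frac{m_0}{2}$) is an $\alpha$- or $\delta$-entry and $s_j = 1$ exactly when it is a $\gamma$- or $\beta$-entry. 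A parallel comparison of $\lambda_{\mathrm{MR}}$ with $\lambda'$, using the formula for $\lambda_i$ in \S\ref{sss:packets-real-coh} together with that for $\alpha_i',\beta_j',\gamma_k',\delta_l',\epsilon'$ in Theorem \ref{t:main}(\ref{item:main1}), then confirms $A_{\q(\xx)}(\lambda') = \sigma(\phi',\eta')$ for this $\eta'$.

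It then remains to translate "$r_j = 1$ versus $s_j = 1$" back into the values $\eta'(e_j')$ and to compare with $\eta(e_i)$. Inverting the Moeglin--Renard formulas for $r_j,s_j$ (recalling that the block $\chi_{\mu_0}\boxtimes S_{m-n}$ of length $m-n$ shifts the index of $\mu_i$ by $m-n$ for $i \ge i_0$), and combining with the description of $\eta$ attached to the discrete series $\pi$ in \S\ref{sss:packets-real-ds} (namely $\eta(e_i) = (-1)^{i-1}$ exactly when $\kappa_i - \frac{m_0}{2}$ is an $\alpha$- or $\beta$-entry), one obtains $\eta'(e_i') = \zeta_i\,\eta(e_i)$ for $i \ne 0$ with $\zeta_i = +1$ for $0 < i < i_0$ and $\zeta_i = (-1)^{m-n+1}$ for $i_0 \le i \le n$, which is the sign in the lemma in both cases $m \equiv n$ and $m \not\equiv n \bmod 2$. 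Finally, since $\sigma(\phi',\eta') = A_{\q(\xx)}(\lambda')$ is nonzero, the character $\eta'$ satisfies \eqref{eq:eta'}, so
\[
 \eta'(e_0') = (-1)^{\frac{1}{2}(r-s)(r-s-1)} \cdot \prod_{i=1}^n \eta'(e_i') = (-1)^{\frac{1}{2}(r-s)(r-s-1)} \cdot \Big(\prod_{i=1}^n \zeta_i\Big) \cdot \eta(e_1 + \dots + e_n);
\]
using $\zeta_0 = \zeta_1\cdots\zeta_n$, the identity $\eta(e_1+\dots+e_n) = (-1)^{\frac{1}{2}(p-q)(p-q-1)} = \epsilon(W)$ from \S\ref{sss:packets-real-ds}, and $\epsilon(V) = (-1)^{\frac{1}{2}(r-s)(r-s-1)}$, this becomes $\eta'(e_0') = \zeta_0 \cdot \epsilon(V)\epsilon(W)$, as claimed.

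I expect the main obstacle to be precisely this bookkeeping: matching Moeglin--Renard's index conventions for $\widetilde{S}_{\phi'}$ (where the dimension-$(m-n)$ factor $S_{m-n}$ shifts positions and the component group is organized into $n+1$ blocks) with our conventions for the discrete series packet of $\U(p,q)$, and correctly tracking the resulting signs $\zeta_i$ in the two parity cases. Everything else is a direct comparison of explicit formulas; the Whittaker-datum compatibility needed throughout has been arranged in \S\ref{ss:whit}.
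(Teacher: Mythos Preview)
Your proposal is correct and follows essentially the same route as the paper: invoke Li's result to write the theta lift as $A_\q(\lambda')$, then match this against M{\oe}glin--Renard's description of $\sigma(\phi',\eta')$ via the explicit computation of $\rr,\ss$ and the one-dimensional parameter. The only cosmetic difference is directional: the paper starts from the $\eta'$ in the statement, computes the associated $(\rr,\ss)$ and $\tilde\lambda'$, and checks these agree with $(\q,\lambda')$, whereas you read off $\eta'$ from $(\q(\xx),\lambda')$ and then verify the formula---the bookkeeping (including the determination of $\eta'(e_0')$ via \eqref{eq:eta'}, $\eta(e_1+\dots+e_n)=\epsilon(W)$, and $\epsilon(V)=(-1)^{\frac12(r-s)(r-s-1)}$) is the same either way.
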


\begin{proof}
It follows from a result of Li \cite{li90} that
\[
 \theta_{V,W,\chi_V,\chi_W,\psi}(\pi(\phi, \eta)) = A_\q(\lambda'),
\]
where $\q$ and $\lambda'$ are as given in Theorem \ref{t:main}\eqref{item:main1}.
Hence it suffices to show that
\[
 \sigma(\phi', \eta') = A_\q(\lambda').
\]
Define $\rr = (r_1,\dots,r_{n+1}), \ss = (s_1,\dots,s_{n+1})$ as in \S \ref{sss:packets-real-coh}.
Then we have
\begin{align*}
 r_i & =  
 \begin{cases}
  1 & \text{if $i<i_0$ and $\eta(e_i) = (-1)^{i-1}$;} \\
  0 & \text{if $i<i_0$ and $\eta(e_i) = (-1)^i$;} \\
  1 & \text{if $i>i_0$ and $\eta(e_{i-1}) = (-1)^{i-1}$;} \\
  0 & \text{if $i>i_0$ and $\eta(e_{i-1}) = (-1)^{i-2}$,}
 \end{cases} \\
 s_i & =  
 \begin{cases}
  0 & \text{if $i<i_0$ and $\eta(e_i) = (-1)^{i-1}$;} \\
  1 & \text{if $i<i_0$ and $\eta(e_i) = (-1)^i$;} \\
  0 & \text{if $i>i_0$ and $\eta(e_{i-1}) = (-1)^{i-1}$;} \\
  1 & \text{if $i>i_0$ and $\eta(e_{i-1}) = (-1)^{i-2}$,}
 \end{cases}
\end{align*}
so that 
\[
 r_{i_0} = r - x - w, \quad
 s_{i_0} = s - z - y.
\]
Since $r_{i_0}, s_{i_0} \ge 0$ and 
\[
 \eta'(e_1' + \dots + e_n' + e_0') = \eta(e_1 + \dots + e_n) \cdot \epsilon(V) \cdot \epsilon(W) = \epsilon(V), 
\]
we have
\[
 \sigma(\phi', \eta') = A_{\tilde{\q}}(\tilde{\lambda}'), 
\]
where $\tilde{\q} = \q_{\rr,\ss}$ is the $\theta$-stable parabolic subalgebra of $\u(r,s)$ as in \S \ref{ss:cohom} and $\tilde{\lambda}'$ is the $1$-dimensional representation of $\l_{\rr,\ss}$ given by
\[
 \tilde{\lambda}' = (\underbrace{\tilde{\lambda}'_1,\dots,\tilde{\lambda}'_1}_{r_1}, \dots, \underbrace{\tilde{\lambda}'_{n+1},\dots,\tilde{\lambda}'_{n+1}}_{r_{n+1}}, \underbrace{\tilde{\lambda}'_1,\dots,\tilde{\lambda}'_1}_{s_1}, \dots, \underbrace{\tilde{\lambda}'_{n+1},\dots,\tilde{\lambda}'_{n+1}}_{s_{n+1}})
\]
with 
\[
 \tilde{\lambda}'_i = 
 \begin{cases}
  \kappa_i - \frac{m_0}{2} + \frac{n_0}{2} - \frac{m-1}{2} + i-1 & \text{if $i < i_0$;} \\
  \frac{n_0}{2} - \frac{n}{2} + i_0-1 & \text{if $i = i_0$;} \\
  \kappa_{i-1} - \frac{m_0}{2} + \frac{n_0}{2} - \frac{m-1}{2} + i+m-n-2 & \text{if $i > i_0$.}
 \end{cases}
\]
On the other hand, $\eta$ is given by 
\[
\begin{aligned}
 \eta(e_i) & = (-1)^{i-1} & & \Longleftrightarrow & \kappa_i - \tfrac{m_0}{2} & \in \{ \alpha_1, \dots, \alpha_x, \beta_1, \dots, \beta_y \}, \\
 \eta(e_i) & = (-1)^i  & & \Longleftrightarrow & \kappa_i - \tfrac{m_0}{2} & \in \{ \gamma_1, \dots, \gamma_z, \delta_1, \dots, \delta_w \}.
\end{aligned} 
\]
From this, we can deduce that $\tilde{\q} = \q$ and $\tilde{\lambda}' = \lambda'$.
This completes the proof.
\end{proof}

\subsubsection{The nonarchimedean case}

Suppose that $F$ is nonarchimedean and $E \ne F \times F$.
Let $\phi$ and $\phi'$ be $L$- and $A$-parameters for $\U_n$ and $\U_m$, respectively, of the form
\begin{align*}
 \phi & = \chi_1 \oplus \dots \oplus \chi_n, \\
 \phi' & = \chi_1 \chi_V^{-1} \chi_W \oplus \dots \oplus \chi_n \chi_V^{-1} \chi_W \oplus (\chi_W \boxtimes S_{m-n})
\end{align*}
with (not necessarily distinct) conjugate-self-dual characters $\chi_i$ of $E^\times$ with sign $(-1)^{n-1}$.
If $m \equiv n \bmod 2$, we assume further the condition on the $\epsilon$-factor
\begin{equation}
\label{eq:epsilon-assumption}
 \epsilon(\tfrac{1}{2}, \chi_i \chi_V^{-1}, \psi^E_2) = 1 
\end{equation}
for all $i$, where $\psi^E_2$ is the character of $E$ given by $\psi^E_2(x) = \psi(\Tr_{E/F}(\delta x))$.

\begin{lem}
\label{l:local-theta-2}
Assume that $\epsilon(V) = \epsilon(W) = +1$.
Then we have
\[
 \theta_{V,W,\chi_V,\chi_W,\psi}(\pi(\phi, \mathbbm{1})) = \sigma(\phi', \mathbbm{1}).
\]
\end{lem}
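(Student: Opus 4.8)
The plan is to identify both sides with the same Langlands quotient, in parallel with the proof of Lemma~\ref{l:local-theta-1}. First, apply Lemma~\ref{l:local-Apacket} with $\chi_i\chi_V^{-1}\chi_W$ in place of $\chi_i$ and $\chi_W$ in place of $\chi_0$; this expresses the right-hand side as an explicit Langlands quotient:
\[
 \sigma(\phi',\mathbbm{1}) = J\bigl(\chi_W|\cdot|^{\frac12(m-n-1)}, \dots, \chi_W|\cdot|^{\frac12}; \, \pi(\phi_0,\mathbbm{1})\bigr)
\]
with $\phi_0 = \bigoplus_i \chi_i\chi_V^{-1}\chi_W$ an $L$-parameter for $\U_n$ when $m \equiv n \bmod 2$, and
\[
 \sigma(\phi',\mathbbm{1}) = J\bigl(\chi_W|\cdot|^{\frac12(m-n-1)}, \dots, \chi_W|\cdot|^{1}; \, \pi(\phi_1,\mathbbm{1})\bigr)
\]
with $\phi_1 = \bigl(\bigoplus_i \chi_i\chi_V^{-1}\chi_W\bigr) \oplus \chi_W$ an $L$-parameter for $\U_{n+1}$ when $m \not\equiv n \bmod 2$. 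Thus it suffices to show that $\theta_{V,W,\chi_V,\chi_W,\psi}(\pi(\phi,\mathbbm{1}))$ coincides with this quotient.

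Next I would compute the theta lift. Note that $\pi(\phi,\mathbbm{1})$ is an irreducible tempered representation of $\U(V_n^+)$, generic for our Whittaker datum, so I may apply the standard description of theta lifts of tempered representations of $p$-adic unitary groups as one goes up a Witt tower, following Kudla's filtration of the Weil representation \cite{kudla} together with \cite{gi1,gi2}: (i) once $\dim V$ exceeds the first occurrence index $m_0$ of $\pi(\phi,\mathbbm{1})$ in the Hermitian Witt tower containing $V$, the lift is the Langlands quotient of the standard module parabolically induced from $\chi_W|\cdot|^{\frac12(m-m_0-1)} \boxtimes \dots \boxtimes \chi_W|\cdot|^{a_0} \boxtimes \theta_0$, with $a_0 = \tfrac12$ or $1$ according to the parity of $m-m_0$ and $\theta_0$ the tempered theta lift at first occurrence; (ii) the first occurrence of $\pi(\phi,\mathbbm{1})$ in the tower with $\epsilon(V) = +1$ lies at dimension $n$ when $m \equiv n \bmod 2$ and at dimension $n+1$ when $m \not\equiv n \bmod 2$, and this is exactly where hypothesis \eqref{eq:epsilon-assumption} is needed: the conservation relation fixes the pair of first-occurrence indices in the two Hermitian towers, and the $\epsilon$-factor criterion \eqref{eq:epsilon-assumption}, with the additive character $\psi^E_2$, selects the $+$ tower; (iii) the tempered lift at first occurrence is $\theta_0 = \pi(\phi_0,\mathbbm{1})$ on $\U(V_n^+)$ in the equal-rank case and $\theta_0 = \pi(\phi_1,\mathbbm{1})$ on $\U(V_{n+1}^+)$ in the almost-equal-rank case, the twist by $\chi_V^{-1}\chi_W$ and the extra summand $\chi_W$ being exactly what the normalization of $\iota_{V,W,\chi_V,\chi_W,\psi}$ dictates. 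Combining (i)--(iii) puts $\theta_{V,W,\chi_V,\chi_W,\psi}(\pi(\phi,\mathbbm{1}))$ into the form obtained above from Lemma~\ref{l:local-Apacket}, which proves the lemma; here one uses Howe duality \cite{howe2,minguez,gt} for the irreducibility of the lift, together with the irreducibility of $\sigma(\phi',\mathbbm{1})$ recalled before Lemma~\ref{l:local-Apacket}.

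The main obstacle is the content of (ii)--(iii): locating the first occurrence of $\pi(\phi,\mathbbm{1})$ and identifying the tempered lift $\theta_0$ there, while tracking the Hermitian Witt towers, the signs $\epsilon(V) = \epsilon(W) = +1$, the twists $\chi_V^{-1}\chi_W$, and the additive character $\psi^E_2$ appearing in \eqref{eq:epsilon-assumption}. All of the ingredients are available in the literature, but the delicate part is reconciling the conventions of \cite{gi1,gi2} with those fixed in \S\ref{s:local-theta} so that the signs and character twists come out correctly; once that is done, the comparison with Lemma~\ref{l:local-Apacket} is purely formal. As in the real case, the argument is entirely local and does not use Arthur's classification.
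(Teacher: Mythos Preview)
Your approach is essentially the same as the paper's: express both sides as the same Langlands quotient, using Lemma~\ref{l:local-Apacket} for $\sigma(\phi',\mathbbm{1})$ and the explicit $p$-adic theta correspondence for $\theta_{V,W,\chi_V,\chi_W,\psi}(\pi(\phi,\mathbbm{1}))$. The paper streamlines your steps (i)--(iii) by citing \cite[Theorems~4.1 and~4.3]{atobe-gan} directly: Theorem~4.1 computes the first-occurrence indices $m^{\mathrm{up}}(\pi), m^{\mathrm{down}}(\pi)$ and identifies which tower is which, while Theorem~4.3 gives the Langlands-quotient form of the lift at every level, with tempered part $\pi(\phi_0,\mathbbm{1})$ or $\pi(\phi_1,\mathbbm{1})$ exactly as you wrote. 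One small correction to your (ii): in the case $m\not\equiv n\bmod 2$, the first occurrence in the $+$ tower is not always at $n+1$; it is $n+1$ or $n-1$ according to the parity of $\#\{i:\chi_i\ne\chi_V\}$. This does not affect the outcome, because \cite[Theorem~4.3]{atobe-gan} yields the Langlands quotient with tempered inducing datum on $\U(V_{n+1}^+)$ in either case, but it means your heuristic ``go up from first occurrence'' in (i) is not literally how the argument runs; the cleanest route is to quote \cite{atobe-gan} rather than reassemble it from Kudla's filtration.
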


\begin{proof}
Write $\pi = \pi(\phi, \mathbbm{1})$ for brevity.
For any $\epsilon = \pm 1$, we define the first occurrence index $m^\epsilon(\pi)$ as the smallest nonnegative integer $m_0$ with $m_0 \equiv m \bmod 2$ such that $\theta_{V_{m_0}^\epsilon, W, \chi_V, \chi_W, \psi}(\pi) \ne 0$.
Put
\[
 m^\up(\pi) = \max \{ m^+(\pi), m^-(\pi) \}, \quad 
 m^\down(\pi) = \min \{ m^+(\pi), m^-(\pi) \}.
\]

Assume first that $m \equiv n \bmod 2$.
Then by \cite[Theorem 4.1]{atobe-gan} and \eqref{eq:epsilon-assumption}, we have
\[
 m^\up(\pi) = n+2, \quad
 m^\down(\pi) = n
\]
with $m^\down(\pi) = m^+(\pi)$.
Moreover, it follows from \cite[Theorem 4.3]{atobe-gan} that 
\[
 \theta_{V,W,\chi_V,\chi_W,\psi}(\pi) = J(\chi_W|\cdot|^{\frac{1}{2}(m-n-1)}, \chi_W|\cdot|^{\frac{1}{2}(m-n-3)}, \dots, \chi_W|\cdot|^{\frac{1}{2}}, \pi(\phi_0, \mathbbm{1}))
\]
with an $L$-parameter
\[
 \phi_0 = \chi_1 \chi_V^{-1} \chi_W \oplus \dots \oplus \chi_n \chi_V^{-1} \chi_W
\]
for $\U_n$.
Hence the assertion follows from Lemma \ref{l:local-Apacket}.

Assume next that $m \not \equiv n \bmod 2$.
Put
\[
 I = \{ i \, | \, \chi_i \ne \chi_V \}.
\]
Then by \cite[Theorem 4.1]{atobe-gan}, we have
\[
\begin{cases}
 m^\up(\pi) = n+1, \quad m^\down(\pi) = n+1 & \text{if $\# I$ is even;} \\
 m^\up(\pi) = n+3, \quad m^\down(\pi) = n-1 & \text{if $\# I$ is odd}
\end{cases} 
\]
with $m^\down(\pi) = m^+(\pi)$.
Moreover, it follows from \cite[Theorem 4.3]{atobe-gan} that
\[
 \theta_{V,W,\chi_V,\chi_W,\psi}(\pi) = J(\chi_W|\cdot|^{\frac{1}{2}(m-n-1)}, \chi_W|\cdot|^{\frac{1}{2}(m-n-3)}, \dots, \chi_W|\cdot|^1, \pi(\phi_1, \mathbbm{1}))
\]
with an $L$-parameter
\[
 \phi_1 =  \chi_1 \chi_V^{-1} \chi_W \oplus \dots \oplus \chi_n \chi_V^{-1} \chi_W \oplus \chi_W 
\]
for $\U_{n+1}$.
Hence the assertion follows from Lemma \ref{l:local-Apacket}.
\end{proof}

\subsubsection{The split case}

Suppose that $F$ is nonarchimedean and $E = F \times F$.
In this case, we may identify $\chi_V, \chi_W$ with unitary characters of $F^\times$ via the first projection.
Let $\phi$ and $\phi'$ be $L$- and $A$-parameters for $\U_n$ and $\U_m$, respectively, of the form
\begin{align*}
 \phi & = \chi_1 \oplus \dots \oplus \chi_n, \\
 \phi' & = \chi_1 \chi_V^{-1} \chi_W \oplus \dots \oplus \chi_n \chi_V^{-1} \chi_W \oplus (\chi_W \boxtimes S_{m-n})
\end{align*}
with (not necessarily distinct) unitary characters $\chi_i$ of $F^\times$.

\begin{lem}
\label{l:local-theta-3}
We have
\[
 \theta_{V,W,\chi_V,\chi_W,\psi}(\pi(\phi, \mathbbm{1})) = \sigma(\phi', \mathbbm{1}).
\]
\end{lem}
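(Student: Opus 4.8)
The plan is to reduce the lemma to a direct comparison of two explicit representations. Since $E = F \times F$, the pair $(\U(W),\U(V))$ is a type II dual pair: the first projections identify $\U(W) \cong \GL(\VV_n)$ and $\U(V) \cong \GL(\VV_m)$, and $\iota_{V,W,\chi_V,\chi_W,\psi}$ realizes an explicit twist of the standard model of the Weil representation of $\GL(\VV_n) \times \GL(\VV_m)$ depending on $\chi_V$, $\chi_W$, $\psi$. On the two sides of the claimed identity we then have purely combinatorial objects: $\pi(\phi,\mathbbm{1}) = \Ind^{\GL(\VV_n)}_{\mathcal B}(\chi_1 \boxtimes \cdots \boxtimes \chi_n)$, an irreducible (unitary) principal series, and $\sigma(\phi',\mathbbm{1})$, the induced representation written down for the split case in \S\ref{s:packets}. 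Since $S_\phi$ and $S_{\phi'}$ are trivial there is nothing packet-internal to keep track of.

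First I would invoke the explicit type II theta correspondence (Kudla \cite{kudla}; in general M\'inguez \cite{minguez}): for $m > n$ and an irreducible representation $\tau$ of $\GL(\VV_n)$, the lift $\theta_{V,W,\chi_V,\chi_W,\psi}(\tau)$ is the unique irreducible quotient of the standard module
\[
 \big(\tau \otimes (\chi_V^{-1}\chi_W \circ \det)\big) \times \chi_W|{\det}|^{\frac{m-n-1}{2}} \times \chi_W|{\det}|^{\frac{m-n-3}{2}} \times \cdots \times \chi_W|{\det}|^{-\frac{m-n-1}{2}}
\]
parabolically induced from the Levi $\GL(\VV_n) \times (F^\times)^{m-n}$; the precise $|{\det}|$-exponents and the twist $\chi_V^{-1}\chi_W$ are fixed by our normalization of the splitting, our choice $\psi(x) = e^{-2\pi\sqrt{-1}x}$, and $\epsilon(V_n^+) = \epsilon(V_m^+) = +1$. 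Specializing to $\tau = \pi(\phi,\mathbbm{1})$ and using transitivity of induction, the $\GL_{m-n}(F)$-block above is $\chi_W|{\det}|^{\frac{m-n-1}{2}} \times \cdots \times \chi_W|{\det}|^{-\frac{m-n-1}{2}}$, whose unique irreducible quotient is the one-dimensional $\chi_W \circ \det$; hence $\theta_{V,W,\chi_V,\chi_W,\psi}(\pi(\phi,\mathbbm{1}))$ is the unique irreducible quotient of
\[
 \chi_1\chi_V^{-1}\chi_W \times \cdots \times \chi_n\chi_V^{-1}\chi_W \times (\chi_W \circ \det),
\]
induced from the parabolic $\mathcal P$ with Levi $(F^\times)^n \times \GL_{m-n}(F)$. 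By \cite[Theorem 4.2]{zel} this induced representation is already irreducible, so it equals $\theta_{V,W,\chi_V,\chi_W,\psi}(\pi(\phi,\mathbbm{1}))$; and for $\phi' = \chi_1\chi_V^{-1}\chi_W \oplus \cdots \oplus \chi_n\chi_V^{-1}\chi_W \oplus (\chi_W \boxtimes S_{m-n})$ it is exactly $\sigma(\phi',\mathbbm{1})$ as described in \S\ref{s:packets}, which is the assertion.

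The only real work is the normalization bookkeeping in the previous paragraph: confirming that the type II recipe, transported through our conventions for $\iota_{V,W,\chi_V,\chi_W,\psi}$, produces precisely the twist $\chi_V^{-1}\chi_W$ on the $\GL(\VV_n)$-factor and a $\chi_W$-block of exponents centered at $0$, so that its Langlands quotient is literally $\chi_W \circ \det$ and not a character twist of it. There is no representation-theoretic obstacle. If one prefers to avoid the full type II recipe, one can instead peel off the characters $\chi_i$ one at a time using the compatibility of theta lifting with parabolic induction, reducing to $\U(W) = \{1\}$, where $\Theta$ of the trivial representation is the one-dimensional representation $\chi_W \circ \det$ of $\GL(\VV_m)$; this route requires the same normalization check.
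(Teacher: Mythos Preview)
Your proposal is correct and follows the same approach as the paper: the paper's entire proof is the single sentence ``The assertion was proved by M\'inguez \cite{minguez},'' and you have simply unpacked what that citation delivers in the split type~II setting, verifying that the explicit formula for the Howe lift matches the description of $\sigma(\phi',\mathbbm{1})$ given in \S\ref{s:packets}. Your normalization discussion is a welcome elaboration, but there is no methodological difference.
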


\begin{proof}
The assertion was proved by M\'inguez \cite{minguez}.
\end{proof}

\subsection{Global theta lifting}
\label{ss:global-theta}

Let $\F$ be a totally real number field with ad\`ele ring $\A = \A_\F$.
Let $\E$ be a totally imaginary quadratic extension of $\F$ and $\omega_{\E/\F}$ the quadratic character of $\A^\times/\F^\times$ associated to $\E/\F$ by global class field theory.
We consider the theta lifting from $\U(\W)$ to $\U(\V)$, where $\W$ is an $n$-dimensional skew-Hermitian space over $\E$ and $\V$ is an $m$-dimensional Hermitian space over $\E$ with $m>n$.
For simplicity, we assume that $\W$ and $\V$ are anisotropic.

Let $\omega_{\V,\W,\chi_\V,\chi_\W,\varPsi}$ be the Weil representation of $\U(\W)(\A) \times \U(\V)(\A)$ relative to $(\chi_\V,\chi_\W,\varPsi)$, where $\chi_\V, \chi_\W$ are characters of $\A_\E^\times / \E^\times$ such that $\chi_\V|_{\A^\times} = \omega_{\E/\F}^m, \chi_\W|_{\A^\times} = \omega_{\E/\F}^n$ and $\varPsi$ is a nontrivial additive character of $\A/\F$.
This is equipped with a natural equivariant map $\varphi \mapsto \theta(\varphi)$ to the space of left $\U(\W)(\F) \times \U(\V)(\F)$-invariant smooth functions on $\U(\W)(\A) \times \U(\V)(\A)$ of moderate growth.
For any irreducible automorphic representation $\varPi$ of $\U(\W)(\A)$, we denote by $\theta_{\V,\W,\chi_\V,\chi_\W,\varPsi}(\varPi)$ the space spanned by automorphic forms on $\U(\V)(\A)$ of the form
\[
 \theta(\varphi, f)(h) = \int_{\U(\W)(\F) \backslash \U(\W)(\A)} \theta(\varphi)(g,h) \overline{f(g)} \, dg
\]
for $\varphi \in \omega_{\V,\W,\chi_\V,\chi_\W,\varPsi}$ and $f \in \varPi$.
If $\theta_{\V,\W,\chi_\V,\chi_\W,\varPsi}(\varPi)$ is nonzero, then it follows from the Howe duality that $\theta_{\V,\W,\chi_\V,\chi_\W,\varPsi}(\varPi)$ is irreducible and isomorphic to $\bigotimes_v \theta_{\V_v,\W_v,\chi_{\V,v},\chi_{\W,v},\varPsi_v}(\varPi_v)$ (see \cite[Corollary 7.1.3]{kudla-rallis}).

We now discuss the nonvanishing of $\theta_{\V,\W,\chi_\V,\chi_\W,\varPsi}(\varPi)$.
For simplicity, we assume that $\varPi_v$ is tempered for all $v$ and that the partial standard $L$-function $L^S(s, \varPi, \chi_\V^{-1})$ of $\varPi$ twisted by $\chi_\V^{-1}$ is holomorphic and nonzero at $s = \frac{1}{2}(m-n+1)$, where $S$ is a sufficiently large finite set of places of $\F$.
Then the Rallis inner product formula, which is a consequence of the Siegel--Weil formula in the convergent range \cite{weil2, ichino}, says that
\[
 \langle \theta(\varphi_1, f_1), \theta(\varphi_2, f_2) \rangle = \frac{L^S(\frac{1}{2}(m-n+1), \varPi, \chi_\V^{-1})}{d^S(\frac{1}{2}(m-n))} \cdot \prod_{v \in S} \ZZ(\varphi_{1,v}, \varphi_{2,v}, f_{1,v}, f_{2,v})
\]
for $\varphi_i = \bigotimes_v \varphi_{i,v} \in \omega_{\V,\W,\chi_\V,\chi_\W,\varPsi}$ and $f_i = \bigotimes_v f_{i,v} \in \varPi$, where
\begin{itemize}
\item $\langle \cdot, \cdot \rangle$ is the Petersson inner product; 
\item $d^S(s) = \prod_{i=1}^n L^S(2s+i, \omega_{\E/\F}^{m-n+i})$, which is holomorphic and nonzero at $s = \frac{1}{2}(m-n)$;
\item $\ZZ(\varphi_{1,v}, \varphi_{2,v}, f_{1,v}, f_{2,v})$ is a certain local integral defined in \S \ref{ss:matcoeff-integral} below.
\end{itemize}
Hence $\theta_{\V,\W,\chi_\V,\chi_\W,\varPsi}(\varPi)$ is nonzero if and only if there exist $\varphi_{1,v}, \varphi_{2,v} \in \omega_{\V_v,\W_v,\chi_{\V,v},\chi_{\W,v},\varPsi_v}$ and $f_{1,v}, f_{2,v} \in \varPi_v$ such that
\[
 \ZZ(\varphi_{1,v}, \varphi_{2,v}, f_{1,v}, f_{2,v}) \ne 0
\]
for all $v$.

\subsection{Arthur's multiplicity formula}

In this subsection, we review Arthur's multiplicity formula for unitary groups \cite{mok,kmsw}, which is a key ingredient in the proof of Theorem \ref{t:main}\eqref{item:main1}.
Let $\F$ be a number field and $\E$ a quadratic extension of $\F$.
Let $\overline{\F}$ and $\overline{\F}_v$ be algebraic closures of $\F$ and $\F_v$, respectively, and fix an embedding $\overline{\F} \hookrightarrow \overline{\F}_v$ over $\F$ for each place $v$ of $\F$.
We also fix an embedding $\E \hookrightarrow \overline{\F}$ over $\F$, which determines an embedding $\E \hookrightarrow \overline{\F}_v$ for each place $v$ of $\F$ and hence a distinguished place $\tilde{v}$ of $\E$ above $v$.
If $v$ is split in $\E$, we identify $\E_v$ with $\F_v \times \F_v$ so that $\tilde{v}$ corresponds to the composition of the natural embedding $\E \hookrightarrow \E_v$ with the first projection $\E_v \rightarrow \F_v$.

Let $\V$ be an $n$-dimensional $\varepsilon$-Hermitian space over $\E$.
Then Arthur's endoscopic classification gives a decomposition of the automorphic discrete spectrum into near equivalence classes of representations:
\[
 L^2_{\mathrm{disc}}(\U(\V)(\F) \backslash \U(\V)(\A)) = \bigoplus_\varPhi L^2_\varPhi(\U(\V)),
\]
where $\varPhi$ runs over global $A$-parameters for $\U_n$, which is a formal unordered finite direct sum of the form
\[
 \varPhi = \bigoplus_i \varPhi_i \boxtimes S_{d_i},
\]
where
\begin{itemize}
\item $\varPhi_i$ is an irreducible conjugate-self-dual cuspidal automorphic representation of $\GL_{n_i}(\A_\E)$ with sign $(-1)^{n-d_i}$;
\item $S_{d_i}$ is the unique $d_i$-dimensional irreducible representation of $\SL_2(\C)$;
\item $(\varPhi_i, d_i) \ne (\varPhi_j, d_j)$ if $i \ne j$;
\item $\sum_i n_i d_i = n$.
\end{itemize}
Moreover, the multiplicity of each irreducible representation in $L^2_\varPhi(\U(\V))$ can be described as follows.

For each place $v$ of $\F$, we regard the localization $\varPhi_v = \bigoplus_i \varPhi_{i,v} \boxtimes S_{d_i}$ of $\varPhi$ at $v$ (where $\varPhi_{i,v}$ is viewed as a representation of $L_{\E_{\tilde{v}}}$ via the local Langlands correspondence) as a local $A$-parameter $\varPhi_v : L_{\F_v} \times \SL_2(\C) \rightarrow {}^L \U_n$ for $\U_n$.
Let $S_{\varPhi_v}$ be the local component group of $\varPhi_v$.
Recall that the local $A$-packet $\Pi_{\varPhi_v}(\U(\V_v))$ consists of semisimple representations of $\U(\V_v)$ of finite length.
We fix a global Whittaker datum, and with respect to its localization at $v$, we denote by $\sigma(\varPhi_v, \eta_v)$ the representation in $\Pi_{\varPhi_v}(\U(\V_v))$ associated to $\eta_v \in \widehat{S}_{\varPhi_v}$.
Let $S_\varPhi$ be the global component group of $\varPhi$, which is defined formally as a free $\Z/2\Z$-module
\[
 S_\varPhi = \bigoplus_i (\Z/2 \Z) e_i, 
\]
where $e_i$ corresponds to $\varPhi_i \boxtimes S_{d_i}$, and which is equipped with a natural homomorphism $S_\varPhi \rightarrow S_{\varPhi_v}$ for each $v$.
This gives rise to a compact group $S_{\varPhi, \A} = \prod_v S_{\varPhi_v}$ equipped with the diagonal map $\Delta : S_\varPhi \rightarrow S_{\varPhi, \A}$.
We denote by $\widehat{S}_{\varPhi, \A}$ the group of continuous characters of $S_{\varPhi, \A}$.
For any $\eta = \bigotimes_v \eta_v \in \widehat{S}_{\varPhi, \A}$, we may form a representation
\[
 \sigma(\varPhi,\eta) = \bigotimes_v \sigma(\varPhi_v,\eta_v)
\]
of $\U(\V)(\A)$.
Finally, let $\epsilon_\varPhi$ be the character of $S_\varPhi$ defined by \cite[(2.5.5)]{mok}.
Then Arthur's multiplicity formula \cite[Theorem* 1.7.1]{kmsw} says that 
\begin{equation}
\label{eq:amf}
 L^2_\varPhi(\U(\V)) \cong \bigoplus_\eta \sigma(\varPhi,\eta),  
\end{equation}
where $\eta$ runs over elements in $\widehat{S}_{\varPhi, \A}$ such that $\eta \circ \Delta = \epsilon_\varPhi$. 

We can describe the character $\epsilon_\varPhi$ more explicitly as follows.

\begin{lem}
\label{l:epsilon}
We have
\[
 \epsilon_\varPhi(e_i) = \prod_{j \ne i} \epsilon(\tfrac{1}{2}, \varPhi_i \times \varPhi_j^\vee)^{\min\{ d_i, d_j \}}, 
\]
where $\epsilon(s, \varPhi_i \times \varPhi_j^\vee)$ is the global $\epsilon$-factor of the pair $(\varPhi_i, \varPhi_j^\vee)$.
In particular, $\epsilon_\varPhi$ is trivial if $d_i=1$ for all $i$.
\end{lem}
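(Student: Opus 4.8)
The plan is to unwind Mok's definition of the character $\epsilon_\varPhi$ in \cite[(2.5.5)]{mok} and carry out the resulting $\epsilon$-factor bookkeeping. Recall that $\epsilon_\varPhi$ is assembled from the global root numbers at $s=\tfrac12$ of the conjugate-self-dual constituents of the twisted adjoint parameter attached to $\varPhi$; after restriction to $L_\E$ this parameter is $\bigoplus_{i,j}(\varPhi_i\times\varPhi_j^\vee)\boxtimes(S_{d_i}\otimes S_{d_j})$, and the generator $e_i$ of $S_\varPhi$ acts nontrivially precisely on the cross-terms with $j\ne i$. So it suffices to compute, for each $j\ne i$, the contribution of $(\varPhi_i\times\varPhi_j^\vee)\boxtimes(S_{d_i}\otimes S_{d_j})$.

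First I would decompose $S_{d_i}\otimes S_{d_j}=\bigoplus_{k=0}^{\min\{d_i,d_j\}-1}S_{d_i+d_j-1-2k}$ by Clebsch--Gordan, so that multiplicativity of $\epsilon$-factors reduces this contribution to a product of $\epsilon\bigl(\tfrac12,(\varPhi_i\times\varPhi_j^\vee)\boxtimes S_d\bigr)$ over these $\min\{d_i,d_j\}$ values of $d$. Since $\varPhi_i,\varPhi_j$ are cuspidal and conjugate-self-dual, $\varPhi_i\times\varPhi_j^\vee$ is conjugate-self-dual with entire completed Rankin--Selberg $L$-function; hence $\epsilon(\tfrac12,\varPhi_i\times\varPhi_j^\vee)=\pm1$, and pairing the shifts $\tfrac12\pm a$ symmetrically in the functional equation yields $\epsilon\bigl(\tfrac12,(\varPhi_i\times\varPhi_j^\vee)\boxtimes S_d\bigr)=\epsilon(\tfrac12,\varPhi_i\times\varPhi_j^\vee)^{\,d}$. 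Multiplying over $d$ and using the telescoping identity $\sum_{k=0}^{\min\{d_i,d_j\}-1}(d_i+d_j-1-2k)=\min\{d_i,d_j\}\cdot\max\{d_i,d_j\}=d_id_j$, the $(i,j)$-contribution collapses to $\epsilon(\tfrac12,\varPhi_i\times\varPhi_j^\vee)^{\,d_id_j}$, which depends only on the parity of $d_id_j$.

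It then remains to match $d_id_j$ with $\min\{d_i,d_j\}$ modulo $2$. They agree when $d_i\equiv d_j\pmod2$; otherwise $d_id_j$ is even while $\min\{d_i,d_j\}$ may be odd, and in that case $\varPhi_i\times\varPhi_j^\vee$ is conjugate-symplectic (its sign is $(-1)^{d_i-d_j}$, from the signs $(-1)^{n-d_i}$ and $(-1)^{n-d_j}$ of $\varPhi_i$ and $\varPhi_j$). I expect this parity-mismatch case to be the main point: one must check that such a pair contributes trivially, either because the relevant constituents already lie outside the parity range selected in \cite[(2.5.5)]{mok}, or because the global root number of the conjugate-symplectic representation $\varPhi_i\times\varPhi_j^\vee$ is forced to equal $+1$ in this situation; both are finite sign computations with the self-dualities involved. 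Granting it, $\epsilon_\varPhi(e_i)=\prod_{j\ne i}\epsilon(\tfrac12,\varPhi_i\times\varPhi_j^\vee)^{\min\{d_i,d_j\}}$. Finally, if $d_i=1$ for all $i$ then $\min\{d_i,d_j\}=1$ and each $\varPhi_i\times\varPhi_j^\vee$ is conjugate-orthogonal, hence has root number $+1$; equivalently $\varPhi$ is a tempered parameter and $\epsilon_\varPhi$ is trivial by the general fact that its packet has a Whittaker-generic base point. This gives the last assertion.
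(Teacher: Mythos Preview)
Your approach attempts a direct unwinding of Mok's definition, whereas the paper simply transports the computation from \cite[Proposition-Definition~8.3.7]{chenevier-lannes} (done there for orthogonal and symplectic groups) to the unitary case, recording the single extra input needed: $\epsilon(\tfrac12,\varPhi_i\times\varPhi_j^\vee)=1$ whenever $\varPhi_i$ and $\varPhi_j$ have the \emph{same} sign, i.e.\ $d_i\equiv d_j\pmod 2$ (this is \cite[Theorem~2.5.4]{mok}). There is a genuine gap in your argument, and it lies earlier than where you place it. Arthur's (hence Mok's) $\epsilon_\varPhi$ is \emph{not} the $\epsilon$-factor of the full $e_i$-odd part of the adjoint parameter. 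One decomposes that parameter into $L_\F\times\SL_2$-irreducibles $\mu_\alpha\boxtimes\nu_\alpha$ with $S_\varPhi$-multiplicity spaces $\lambda_\alpha$, and forms the product of $\det(s\mid\lambda_\alpha)$ over only those $\alpha$ with $\epsilon(\tfrac12,\mu_\alpha)=-1$. For the cross term the Clebsch--Gordan decomposition of $S_{d_i}\otimes S_{d_j}$ contributes exactly $\min\{d_i,d_j\}$ summands, each with $\mu_\alpha=\varPhi_i\otimes\varPhi_j^\vee$ and with $e_i$ acting by $-1$ on the associated $\lambda_\alpha$; this is what produces the exponent $\min\{d_i,d_j\}$. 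Your steps~3--4 instead compute $\epsilon(\tfrac12,\mu_\alpha\boxtimes\nu_\alpha)$ and then sum dimensions to $d_id_j$, which is simply a different quantity.

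Your step~5 cannot repair this, because you have the key sign fact backwards. It is the conjugate-\emph{orthogonal} case ($d_i\equiv d_j$) in which the global root number is forced to be $+1$; conjugate-\emph{symplectic} root numbers are only constrained to lie in $\{\pm1\}$. When $d_i\not\equiv d_j$ one can have $\epsilon(\tfrac12,\varPhi_i\times\varPhi_j^\vee)=-1$ with $\min\{d_i,d_j\}$ odd, so the correct value of $\epsilon_\varPhi(e_i)$ picks up a $-1$ from this pair while your $\epsilon^{d_id_j}=+1$; no parity bookkeeping reconciles the two. Your treatment of the ``in particular'' clause is fine and in fact invokes exactly the same-sign root-number fact the paper uses.
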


\begin{proof}
The character $\epsilon_\varPhi$ is explicated in \cite[Proposition-Definition 8.3.7]{chenevier-lannes} in the case of orthogonal and symplectic groups.
We can apply the same argument to the case of unitary groups, noting that $\epsilon(\tfrac{1}{2}, \varPhi_i \times \varPhi_j^\vee) = 1$ if $\varPhi_i$ and $\varPhi_j$ have the same sign (see \cite[Theorem 2.5.4]{mok}).
\end{proof}

\subsection{Conjugate-self-dual characters}

In this subsection, we collect some results on conjugate-self-dual characters which we will use in the proof of Theorem \ref{t:main}\eqref{item:main1}.
Let $F$ be a local field of characteristic zero and $E$ an \'etale quadratic algebra over $F$.
Let $\psi$ be a nontrivial additive character of $F$ and define a nontrivial additive character $\psi^E_2$ of $E$ by $\psi^E_2(x) = \psi(\Tr_{E/F}(\delta x))$.
Let $\chi$ be a character of $E^\times$.
Then $\chi$ is conjugate-self-dual if and only if $\chi$ is trivial on $\mathrm{N}_{E/F}(E^\times)$.
Also, if $E \ne F \times F$, then $\chi$ is conjugate-orthogonal (resp.~conjugate-symplectic) if and only if $\chi|_{F^\times} = \mathbbm{1}$ (resp.~$\chi|_{F^\times} =\omega_{E/F}$).
We consider the value of the $\epsilon$-factor $\epsilon(s, \chi, \psi_2^E)$ at $s = \frac{1}{2}$.

\begin{lem}
\label{l:csd-char-epsilon}
Let $\chi$ be a conjugate-self-dual character of $E^\times$.
\begin{enumerate}
\item
\label{item:csd-char-epsilon1}
 If $E = F \times F$, then we have 
\[
 \epsilon(\tfrac{1}{2}, \chi, \psi_2^E) = 1.
\]
\item 
\label{item:csd-char-epsilon2}
If $E \ne F \times F$, then we have
\[
 \epsilon(\tfrac{1}{2}, \chi, \psi_2^E) = \pm 1.
\]
If further $\chi$ is conjugate-orthogonal, then we have
\[
 \epsilon(\tfrac{1}{2}, \chi, \psi_2^E) = 1.
\]
\item
\label{item:csd-char-epsilon3}
Suppose that $F = \R$ and $E = \C$.
Write
\[
 \chi(z) = \left( \frac{z}{\sqrt{z \bar{z}}} \right)^{2 \kappa}
\]
for some $\kappa \in \frac{1}{2} \Z$.
Assume further that $\delta = \sqrt{-1}$ and $\psi(x) = e^{- 2 \pi \sqrt{-1} x}$, so that $\psi_2^E(z) = e^{2 \pi (z - \bar{z})}$.
If $\kappa \in \Z$, then we have
\[
 \epsilon(\tfrac{1}{2}, \chi, \psi_2^E) = 1.
\]
If $\kappa \notin \Z$, then we have
\[
 \epsilon(\tfrac{1}{2}, \chi, \psi_2^E) = 
 \begin{cases}
  +1 & \text{if $\kappa > 0$;} \\
  -1 & \text{if $\kappa < 0$.}
 \end{cases}
\]
\end{enumerate}
\end{lem}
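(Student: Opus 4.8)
The plan is to treat the three parts separately, with part (iii) reducing partly to part (ii).

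For part (i), write $E = F \times F$, so that a character of $E^\times = F^\times \times F^\times$ is a pair, and conjugate-self-duality forces it to be of the form $\chi = (\chi_0, \chi_0^{-1})$ for a character $\chi_0$ of $F^\times$. Since $\Tr_{E/F}(\delta) = 0$ we may write $\delta = (\delta_0, -\delta_0)$ with $\delta_0 \in F^\times$, and then $\psi_2^E = \psi_{\delta_0} \boxtimes \overline{\psi_{\delta_0}}$, where $\psi_a(x) := \psi(ax)$. Hence $\epsilon(\tfrac12, \chi, \psi_2^E) = \epsilon(\tfrac12, \chi_0, \psi_{\delta_0}) \cdot \epsilon(\tfrac12, \chi_0^{-1}, \overline{\psi_{\delta_0}})$. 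Using the translation law $\epsilon(s, \mu, \psi_a) = \mu(a) |a|^{s - 1/2} \epsilon(s, \mu, \psi)$ with $a = -1$ to rewrite the second factor over $\psi_{\delta_0}$, and then the local functional equation $\epsilon(\tfrac12, \mu, \psi)\,\epsilon(\tfrac12, \mu^{-1}, \psi) = \mu(-1)$, the two contributions combine to $\chi_0^{-1}(-1)\cdot\chi_0(-1) = 1$.

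For part (ii), note first that $|x^c|_E = |x|_E$, so the identity $\chi\cdot\chi^c = 1$ forces $\chi$ to be unitary; in particular $|\epsilon(\tfrac12, \chi, \psi_2^E)| = 1$. Moreover $\overline{\epsilon(\tfrac12, \chi, \psi_2^E)} = \epsilon(\tfrac12, \bar\chi, \overline{\psi_2^E})$, and since $\delta^c = -\delta$ we have $\overline{\psi_2^E} = \psi_2^E \circ c$, while conjugate-self-duality gives $\bar\chi = \chi^{-1} = \chi \circ c$; hence by the invariance of $\epsilon$-factors under the field automorphism $c$ of $E$, the right-hand side equals $\epsilon(\tfrac12, \chi, \psi_2^E)$. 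Thus $\epsilon(\tfrac12, \chi, \psi_2^E)$ is a real number of absolute value $1$, i.e.\ $\pm 1$. If moreover $\chi$ is conjugate-orthogonal, then $\chi|_{F^\times} = \mathbbm{1}$ and $\psi_2^E$ is trivial on $F$ (again because $\Tr_{E/F}(\delta) = 0$), so the local form of the Fröhlich--Queyrut theorem (see, e.g., \cite{ggp1}) yields $\epsilon(\tfrac12, \chi, \psi_2^E) = 1$.

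For part (iii), if $\kappa \in \Z$ then $\chi_\kappa|_{\R^\times}$ is trivial, so $\chi_\kappa$ is conjugate-orthogonal and the claim follows from part (ii). So suppose $\kappa \notin \Z$ and put $\ell = 2\kappa$, an odd integer. Unwinding the definitions gives $\psi_2^E(z) = e^{2\pi(z - \bar z)}$, which equals $\psi_{\C}(-\sqrt{-1}\,z)$ for the additive character $\psi_{\C}(z) = e^{2\pi\sqrt{-1}(z + \bar z)}$ of $\C$; since $|-\sqrt{-1}|_{\C} = 1$, the translation law gives $\epsilon(\tfrac12, \chi_\kappa, \psi_2^E) = \chi_\kappa(-\sqrt{-1}) \cdot \epsilon(\tfrac12, \chi_\kappa, \psi_{\C}) = (-\sqrt{-1})^{\ell}\cdot\sqrt{-1}^{\,|\ell|}$, using the classical value $\epsilon(\tfrac12, \chi_\kappa, \psi_{\C}) = \sqrt{-1}^{\,|\ell|}$ for characters of $\C^\times$. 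For $\ell > 0$ this is $\bigl((-\sqrt{-1})\cdot\sqrt{-1}\bigr)^{\ell} = 1$ and for $\ell < 0$ it is $(-1)^{\ell} = -1$, as claimed; evaluating the same formula for even $\ell$ gives $1$, consistently with part (ii). The routine manipulations above are elementary, but the step I expect to need the most care is part (iii): one must keep track of the sign convention $\psi(x) = e^{-2\pi\sqrt{-1}x}$ and, crucially, of the fact that $\psi_2^E$ is \emph{trivial on} $\R$ rather than being $\psi\circ\Tr_{\C/\R}$, and then match these against the normalization used in whatever reference supplies the value $\epsilon(\tfrac12, \chi_\kappa, \psi_{\C})$; a secondary point is to invoke Fröhlich--Queyrut in exactly the form valid for additive characters trivial on $F$, which is what makes the conjugate-orthogonal case of part (ii) go through.
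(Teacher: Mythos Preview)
Your proof is correct and follows essentially the same route as the paper. For part (i) both you and the paper do the split computation via the functional equation; for parts (ii) and (iii) the paper simply cites \cite[Propositions 5.1 and 5.2]{ggp1} and \cite[Proposition 2.1]{ggp2}, and what you have written is exactly the content of those references unpacked---the reality argument plus Fr\"ohlich--Queyrut for (ii), and the explicit archimedean $\epsilon$-factor computation via the translation law for (iii). Your caution about matching the additive-character normalization in (iii) is well placed, but the computation you give is consistent and lands on the stated signs.
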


\begin{proof}
If $F = E \times E$, then we may write $\chi = \chi_0 \boxtimes \chi_0^{-1}$ and $\psi_2^E = \psi_0 \boxtimes \psi_0^{-1}$ for some characters $\chi_0$ and $\psi_0$ of $F^\times$ and $F$, respectively.
Then we have
\[
 \epsilon(\tfrac{1}{2}, \chi, \psi_2^E)
 = \epsilon(\tfrac{1}{2}, \chi_0, \psi_0) \cdot 
 \epsilon(\tfrac{1}{2}, \chi_0^{-1}, \psi_0^{-1})
 = 1, 
\]
so that \eqref{item:csd-char-epsilon1} follows.
For \eqref{item:csd-char-epsilon2} and \eqref{item:csd-char-epsilon3}, see \cite[Propositions 5.1 and 5.2]{ggp1} and \cite[Proposition 2.1]{ggp2}, respectively. 
\end{proof}

Let $F$ be a nonarchimedean local field of characteristic zero and $E$ a quadratic extension of $F$.
We prove the existence of a conjugate-symplectic character $\chi$ of $E^\times$ with a prescribed value of $\epsilon(\tfrac{1}{2}, \chi, \psi^E_2)$.

\begin{lem}
\label{l:csd-char-local}
Assume that either
\begin{itemize}
\item $E$ is unramified over $F$; or
\item the residual characteristic of $F$ is odd and $E$ is ramified over $F$.
\end{itemize}
Then there exists a conjugate-symplectic character $\chi$ of $E^\times$ such that
\[
 \epsilon(\tfrac{1}{2}, \chi, \psi^E_2) = 1.
\]
\end{lem}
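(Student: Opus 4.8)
The plan is to exhibit, in each case, a conjugate-symplectic character $\chi$ of $E^\times$ and a conjugate-orthogonal character $\nu$ of $E^\times$ such that $\chi\nu$ is again conjugate-symplectic and $\epsilon(\tfrac12,\chi\nu,\psi^E_2)=-\epsilon(\tfrac12,\chi,\psi^E_2)$; since both $\epsilon$-factors lie in $\{\pm 1\}$ by Lemma \ref{l:csd-char-epsilon}\eqref{item:csd-char-epsilon2}, one of $\chi$ and $\chi\nu$ is then the character we want. The two facts I would use repeatedly are: first, that a character of $E^\times$ trivial on $F^\times$ is automatically conjugate-self-dual, hence conjugate-orthogonal, so that the product of a conjugate-symplectic character with such a character stays conjugate-symplectic (the conjugate-symplectic characters form a torsor under the conjugate-orthogonal ones); and second, the standard formula for the behaviour of an $\epsilon$-factor under twisting by an unramified character, together with the value $\epsilon(\tfrac12,\nu,\psi^E_2)=1$ from Lemma \ref{l:csd-char-epsilon}\eqref{item:csd-char-epsilon2}.

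For $E/F$ unramified I would take $\chi=\chi_0$, the unramified character of $E^\times$ with $\chi_0(\varpi)=-1$ for a common uniformizer $\varpi$ of $F$ and $E$; it restricts to $\omega_{E/F}$ on $F^\times$ and has order two, so it is conjugate-symplectic, and $\epsilon(\tfrac12,\chi_0,\psi^E_2)=\chi_0(\varpi)^{n(\psi^E_2)}$. Since $k_E/k_F$ is a nontrivial residue extension, there is a character $\nu$ of $E^\times$ trivial on $F^\times$ and on $1+\p_E$ but not on $\mathcal{O}_E^\times$; such $\nu$ is conjugate-orthogonal of conductor exponent $1$, and the twisting formula gives $\epsilon(\tfrac12,\chi_0\nu,\psi^E_2)=\chi_0(\varpi)^{1+n(\psi^E_2)}\epsilon(\tfrac12,\nu,\psi^E_2)=\chi_0(\varpi)^{1+n(\psi^E_2)}$, which is the negative of $\epsilon(\tfrac12,\chi_0,\psi^E_2)$, as required.

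For $E/F$ ramified with odd residual characteristic, the extension is tamely ramified and $k_E=k_F$. I would fix $\delta_0\in E^\times$ with $\delta_0^2\in F^\times$, $c(\delta_0)=-\delta_0$, and $\mathrm{val}_E(\delta_0)=1$ (so $\delta\in F^\times\delta_0$), and build a conjugate-symplectic character $\chi$ of conductor exponent $1$ by prescribing $\chi|_{1+\p_E}=1$, $\chi$ equal to the quadratic character of $k_F^\times$ on $\mathcal{O}_E^\times/(1+\p_E)=k_E^\times=k_F^\times$ (nontrivial since $q$ is odd, forcing conductor exponent exactly $1$), and $\chi(\delta_0)$ a square root of $\omega_{E/F}(-1)$; the only thing to check is that the prescribed values are compatible on $F^\times$, which comes down to $-\delta_0^2=\operatorname{N}_{E/F}(\delta_0)$ being a norm. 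With $\nu$ the unramified character of $E^\times$ having $\nu(\delta_0)=-1$ (conjugate-orthogonal), $\chi\nu$ is conjugate-symplectic, and the twisting formula gives $\epsilon(\tfrac12,\chi\nu,\psi^E_2)=\nu(\delta_0)^{1+n(\psi^E_2)}\epsilon(\tfrac12,\chi,\psi^E_2)$. Here I would observe that $n(\psi^E_2)$ is even: writing $\delta=a\delta_0$ with $a\in F^\times$ the valuation $\mathrm{val}_E(\delta)$ is odd, and with $\mathfrak{d}_{E/F}=\p_E$ this pins the conductor exponent of $\psi^E_2$ to an even integer, so the right-hand side is $-\epsilon(\tfrac12,\chi,\psi^E_2)$.

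The computations are elementary, but the places where the argument genuinely needs its hypotheses — and where I expect the main obstacle to be — are the existence of the auxiliary characters (a ramified conjugate-orthogonal character of conductor exponent $1$ when $E/F$ is unramified, which needs $k_E\ne k_F$; a conjugate-symplectic character of conductor exponent $1$ when $E/F$ is ramified, which uses the tame structure $E^\times=\delta_0^{\Z}\times\mu_{q-1}\times(1+\p_E)$ with $\mu_{q-1}\subset\mathcal{O}_F^\times$ together with the norm compatibility) and the conductor-and-valuation bookkeeping that yields $n(\psi^E_2)\in 2\Z$ in the tame ramified case; it is exactly these tame manipulations that break when the residual characteristic is $2$. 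The whole thing hinges on the single parity of $1+n(\psi^E_2)$ (equivalently $a(\nu)+n(\psi^E_2)$) coming out odd, so the real work is keeping that bookkeeping consistent.
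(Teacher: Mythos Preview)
Your argument is correct and, in the ramified case, essentially identical to the paper's: both construct the two conjugate-symplectic characters of $E^\times$ of conductor exponent~$1$ (which differ exactly by the unramified quadratic character $\nu$ of $E^\times$) and show their $\epsilon$-factors are negatives of one another. The paper does this by writing out the Gauss-sum expression for $\epsilon(\tfrac12,\chi,\psi^E_2)$ and observing that the integral $\mathcal{I}$ is independent of the choice of square root $\zeta=\chi(\varpi_E)$, while you invoke the unramified-twist formula $\epsilon(\tfrac12,\chi\nu,\psi^E_2)=\nu(\varpi_E)^{a(\chi)+n(\psi^E_2)}\epsilon(\tfrac12,\chi,\psi^E_2)$; these are the same computation. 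Your parity check that $n(\psi^E_2)$ is even is exactly the paper's remark that ``the order of $\psi^E_2$ is even, which we write as $2a$.''

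For the unramified case the paper simply cites \cite[Proposition~3.1]{ggp2}, whereas you supply a self-contained argument by twisting the unramified conjugate-symplectic character by a tamely ramified conjugate-orthogonal character of conductor exponent~$1$ (which exists because $k_E^\times/k_F^\times$ is nontrivial) and again using the unramified-twist formula together with $\epsilon(\tfrac12,\nu,\psi^E_2)=1$ from Lemma~\ref{l:csd-char-epsilon}\eqref{item:csd-char-epsilon2}. This is a perfectly good replacement for the citation and buys you a uniform treatment of both cases via the same twisting mechanism.
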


\begin{proof}
If $E$ is unramified over $F$, then the assertion follows from \cite[Proposition 3.1]{ggp2}.
Hence we may assume that the residual characteristic of $F$ is odd and $E$ is ramified over $F$.
Then the order of $\psi \circ \Tr_{E/F}$ is odd, so that the order of $\psi^E_2$ is even, which we write as $2a$.
Let $\mathfrak{o}_F$ (resp.~$\mathfrak{o}_E$) be the maximal compact subring of $F$ (resp.~$E$), $\mathfrak{p}_F$ (resp.~$\mathfrak{p}_E$) the maximal ideal of $\mathfrak{o}_F$ (resp.~$\mathfrak{o}_E$), and $\varpi_F$ (resp.~$\varpi_E$) a uniformizer of $\mathfrak{o}_F$ (resp.~$\mathfrak{o}_E$).
We may assume that $\varpi_E^2 = \varpi_F$, so that $\Tr_{E/F}(\varpi_E) = 0$.
Since $\omega_{E/F}|_{1 + \mathfrak{p}_F} = \mathbbm{1}$ and $\mathfrak{o}_F^\times / (1 + \mathfrak{p}_F) \cong \mathfrak{o}_E^\times / (1 + \mathfrak{p}_E)$, there are precisely two conjugate-symplectic characters of $E^\times$ of conductor $1$.
Indeed, such a character $\chi$ is given by 
\[
 \chi|_{F^\times} = \omega_{E/F}, \quad
 \chi|_{1 + \mathfrak{p}_E} = \mathbbm{1}, \quad
 \chi(\varpi_E) = \zeta
\]
for some square root $\zeta$ of $\omega_{E/F}(\varpi_F)$.
Then we have 
\[
 \epsilon(\tfrac{1}{2}, \chi, \psi^E_2) = \chi(\varpi_E^{2a+1}) \cdot \frac{\mathcal{I}}{|\mathcal{I}|}, 
\]
where 
\[
 \mathcal{I} = \int_{\mathfrak{o}_E^\times} \chi(x)^{-1} \psi^E_2(\varpi_E^{-2a-1} x) \, dx.
\]
Note that $\chi(\varpi_E^{2a+1}) = \zeta \cdot \omega_{E/F}(\varpi_F^a)$ but that $\mathcal{I}$ does not depend on $\zeta$ since
\begin{align*}
 \mathcal{I} & = \operatorname{vol}(1 + \mathfrak{p}_E) \cdot \sum_{x \in \mathfrak{o}_E^\times/ (1 + \mathfrak{p}_E)} \chi(x)^{-1} \psi^E_2(\varpi_E^{-2a-1} x)  \\
 & = \operatorname{vol}(1 + \mathfrak{p}_E) \cdot \sum_{x \in \mathfrak{o}_F^\times/ (1 + \mathfrak{p}_F)} \omega_{E/F}(x)^{-1} \psi(2 \delta_0 x),
\end{align*}
where $\delta_0 = \varpi_E^{-2a-1} \delta \in F^\times$.
Hence we can choose $\zeta$ so that $\epsilon(\tfrac{1}{2}, \chi, \psi^E_2) = 1$.
\end{proof}

Let $\F$ be a number field and $\E$ a quadratic extension of $\F$.
Let $\Sigma$ be the set of places $v$ of $\F$ such that $\E_v \ne \F_v \times \F_v$.
We globalize local conjugate-self-dual characters to a global conjugate-self-dual character.

\begin{lem}
\label{l:csd-char-global}
For each $v \in \Sigma$, let $\chi_v$ be a conjugate-orthogonal (resp.~conjugate-symplectic) character of $\E_v^\times$.
Assume that $\chi_v$ is unramified for almost all $v \in \Sigma$.
Then there exists a conjugate-orthogonal (resp.~conjugate-symplectic) character $\chi_0$ of $\A_\E^\times/\E^\times$ such that
\[
 \chi_{0,v} = \chi_v
\]
for all $v \in \Sigma$.
\end{lem}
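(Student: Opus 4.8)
The plan is to handle the conjugate-orthogonal and conjugate-symplectic cases separately, reducing the latter to the former. For the reduction, I first observe that a global conjugate-symplectic Hecke character $\mu$ of $\E$ exists: since $\E^\times$ is discrete in $\A_\E^\times$ one has $\A_\F^\times \cap \E^\times = \F^\times$, so $\omega_{\E/\F}$, regarded as a character of $\A_\F^\times/\F^\times$, first extends to $\A_\F^\times \E^\times$ by the trivial character on $\E^\times$ and then, by Pontryagin duality, to a character $\mu$ of $\A_\E^\times/\E^\times$; by construction $\mu|_{\A_\F^\times} = \omega_{\E/\F}$, i.e.\ $\mu$ is conjugate-symplectic. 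Then, given the data $(\chi_v)_{v \in \Sigma}$ in the conjugate-symplectic case, the characters $\chi_v \mu_v^{-1}$ are conjugate-orthogonal and unramified for almost all $v \in \Sigma$, and if we can globalize them to a conjugate-orthogonal character $\chi_0'$, then $\chi_0 := \chi_0' \mu$ settles the conjugate-symplectic case.

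It thus remains to treat the conjugate-orthogonal case, and here I would pass to the norm-one torus. Write $\E_v^1 = \{ x \in \E_v^\times \mid x x^c = 1 \}$, let $\E^1$ be the global analogue, and let $\A_\E^1$ be the corresponding restricted product. By Hilbert 90, locally the map $z \mapsto z/z^c : \E_v^\times \to \E_v^1$ is surjective with kernel $\F_v^\times$, and similarly $\E^\times \to \E^1$ and $\A_\E^\times \to \A_\E^1$ are surjective with kernels $\F^\times$ and $\A_\F^\times$. Hence composition with $z \mapsto z/z^c$ gives a bijection between conjugate-orthogonal Hecke characters of $\E$ and continuous characters of $\A_\E^1/\E^1$, and locally it identifies each $\chi_v$ $(v \in \Sigma)$ with a character $\nu_v$ of $\E_v^1$. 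The point is that $\E_v^1$ is compact for every $v \in \Sigma$ (as $v$ is nonsplit), and that $\nu_v$ is trivial whenever $\chi_v$ and $\E_v/\F_v$ are both unramified, hence for all but finitely many $v \in \Sigma$; consequently $\prod_{v \in \Sigma} \nu_v$ is a well-defined continuous character of the compact group $K := \prod_{v \in \Sigma} \E_v^1$.

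To finish, note that $K$ embeds into $\A_\E^1/\E^1$ by extending by $1$ at the split places: the map is injective because an element of $\E^1$ that is trivial at every split place is trivial, and split places exist by Chebotarev. Since $K$ is compact, its image is a closed subgroup, so by Pontryagin duality the character $\prod_{v \in \Sigma} \nu_v$ extends to a continuous character $\nu$ of $\A_\E^1/\E^1$; pulling $\nu$ back along $z \mapsto z/z^c$ produces a conjugate-orthogonal Hecke character $\chi_0$ of $\E$ with $\chi_{0,v} = \chi_v$ for all $v \in \Sigma$.

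The hard part here is not analytic but organizational: one must recognize that for $v \in \Sigma$ the local norm-one groups $\E_v^1$ are compact and that the hypothesis ``$\chi_v$ unramified for almost all $v$'' forces all but finitely many $\nu_v$ to be trivial, so that $\prod_{v \in \Sigma}\nu_v$ is a genuine continuous character of the compact group $K$ and the extension becomes a direct application of Pontryagin duality. The preliminary reduction of the conjugate-symplectic case, the compatibility of the global notions of conjugate-orthogonal and conjugate-symplectic character with their localizations, and the verification that the constructed character has the prescribed components are then all routine.
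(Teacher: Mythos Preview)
Your proof is correct and follows essentially the same route as the paper's. Both reduce the conjugate-symplectic case to the conjugate-orthogonal one by twisting by a fixed global conjugate-symplectic Hecke character, then identify conjugate-orthogonal characters with characters of the anisotropic norm-one torus (the paper writes it as $T = \Res_{\E/\F}(\mathbb{G}_m)/\mathbb{G}_m$, you as $\E^1$ via Hilbert~90; these are isomorphic via $z \mapsto z/z^c$), and use compactness of $\prod_{v\in\Sigma} T_v$ together with the closedness of its image in $T(\A)/T(\F)$ to extend the product of the given local characters to a global one by Pontryagin duality.
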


\begin{proof}
We may reduce the conjugate-symplectic case to the conjugate-orthogonal case by taking a conjugate-symplectic character $\chi'$ of $\A_\E^\times / \E^\times$ and applying the lemma to the character $\chi_v \cdot \chi_v'$ of $\E_v^\times$ for $v \in \Sigma$.
To treat the conjugate-orthogonal case, we consider an anisotropic torus
\[
 T = {\Res}_{E/F}(\mathbb{G}_m) / \mathbb{G}_m
\]
over $\F$.
For each $v \in \Sigma$, let $\nu_v$ be a character of $T_v$.
Assume that $\nu_v$ is unramified for almost all $v \in \Sigma$.
Then we may form a character $\nu_\Sigma = \bigotimes_{v \in \Sigma} \nu_v$ of $T_\Sigma = \prod_{v \in \Sigma} T_v$.
Since $T_\Sigma$ is compact, the image of the natural continuous embedding
\[
 T_\Sigma \longhookrightarrow T(\A) / T(\F)
\]
is closed. 
Hence we may extend $\nu_\Sigma$ to a character $\nu_0$ of $T(\A) / T(\F)$, so that 
\[
 \nu_{0,v} = \nu_v
\]
for all $v \in \Sigma$.
This completes the proof.
\end{proof}

\subsection{Global-to-local argument}

We now prove Theorem \ref{t:main}\eqref{item:main1}.
Thus we consider the theta lifting from $\U(p,q)$ to $\U(r,s)$ with $p+q=n$ and $r+s=m$ in the case
\[
 m>n.
\]
Let $\pi$ be a discrete series representation of $\U(p,q)$ with Harish-Chandra parameter $\lambda$.
We assume that its theta lift $\theta_{r,s}(\pi)$ to $\U(r,s)$ relative to $(\chi_V,\chi_W,\psi)$ is nonzero, where we take the data $(\chi_V,\chi_W,\psi)$ given in \S \ref{ss:main-theorem-setup}.
Then by Lemma \ref{l:local-theta-xyzw}, we may write 
\[
 \lambda = (\alpha_1, \dots, \alpha_x, \beta_1, \dots, \beta_y, \gamma_1, \dots, \gamma_z, \delta_1, \dots, \delta_w) + \bigg( \frac{m_0}{2}, \dots, \frac{m_0}{2} \bigg)
\]
with 
\begin{itemize}
\item $\alpha_i, \gamma_j > 0$ and $\beta_i, \delta_j \le 0$;
\item $x+y = p$ and $z+w = q$;
\item $x+w \le r$ and $z+y \le s$.
\end{itemize}
As in \S \ref{sss:local-theta-real}, we define an $L$-parameter 
\[
 \phi = \chi_{\kappa_1} \oplus \dots \oplus \chi_{\kappa_n}
\]
for $\U_n$ such that $\pi$ belongs to the $L$-packet $\Pi_\phi(\U(p,q))$.
Let $\eta$ be the character of $S_\phi$ associated to $\pi$ as in \S \ref{sss:packets-real-ds}, so that $\pi = \pi(\phi, \eta)$.

To prove Theorem \ref{t:main}\eqref{item:main1}, we appeal to a global-to-local argument and derive the information about $\theta_{r,s}(\pi)$ from the knowledge of $\theta_{r,s}(\pi_+)$, where $\pi_+$ is a discrete series representation of $\U(p,q)$ with sufficiently regular infinitesimal character.
More precisely, we assume that the Harish-Chandra parameter $\lambda_+$ of $\pi_+$ is of the form 
\[
 \lambda_+ = \lambda + (\underbrace{t, \dots, t}_x, \underbrace{-t, \dots, -t}_y, \underbrace{t, \dots, t}_z, \underbrace{-t, \dots, -t}_w)
\]
for some positive integer 
\[
 t \ge \frac{m-n+1}{2}.
\]
As in \S \ref{sss:local-theta-real}, we define an $L$-parameter 
\[
 \phi_+ = \chi_{\kappa_{+,1}} \oplus \dots \oplus \chi_{\kappa_{+,n}}
\]
for $\U_n$ such that $\pi_+$ belongs to the $L$-packet $\Pi_{\phi_+}(\U(p,q))$.
Then we have
\[
 \pi_+ = \pi(\phi_+, \eta),
\]
where $\eta$ is viewed as a character of $S_{\phi_+}$ via the canonical isomorphism $S_{\phi_+} \cong S_\phi$.

To simplify the argument, we also need an auxiliary irreducible representation $\pi_0$ of $\U(n,0)$ with Harish-Chandra parameter $(\kappa_{0,1}, \dots, \kappa_{0,n})$ such that 
\[
 \kappa_{0,1} > \dots > \kappa_{0,n} > \frac{m_0+m-n+1}{2}.
\]
Define an $L$-parameter $\phi_0$ for $\U_n$ by 
\[
 \phi_0 = \chi_{\kappa_{0,1}} \oplus \dots \oplus \chi_{\kappa_{0,n}}.
\]
Then $\pi_0$ belongs to the $L$-packet $\Pi_{\phi_0}(\U(n,0))$.
Let $\eta_0$ be the character of $S_{\phi_0}$ associated to $\pi_0$ as in \S \ref{sss:packets-real-ds}, so that $\pi_0 = \pi(\phi_0, \eta_0)$.

We now globalize everything in sight.
Let $\F$ be a real quartic field and $\E$ a totally imaginary quadratic extension of $\F$ such that $\E_v = \F_v \times \F_v$ for all places $v$ of $\F$ above $2$.
Let $v_0,v_1,v_2,v_3$ be the four real places of $\F$.
Fix an element $\delta \in \E^\times$ with $\Tr_{\E/\F}(\delta) = 0$ and a nontrivial additive character $\varPsi$ of $\A/\F$ such that 
\begin{itemize}
\item $\delta$ belongs to the $(\F_{v_i}^\times)^2$-orbit of $\sqrt{-1}$ for $i=0,1,2,3$;
\item $\varPsi_{v_i}$ belongs to the $(\F_{v_i}^\times)^2$-orbit of $\psi$ for $i=0,1,2,3$.
\end{itemize}
We will take the global Whittaker datum determined by $\delta$ and $\varPsi$ as in \S \ref{ss:whit}.
Let $\W$ be the $n$-dimensional anisotropic skew-Hermitian space over $\E$ such that
\begin{itemize}
\item the signature of $\W_{v_i}$ is $(p,q)$ for $i=0,1$;
\item the signature of $\W_{v_i}$ is $(n,0)$ for $i=2,3$;
\item $\epsilon(\W_v) = 1$ for all nonarchimedean places $v$ of $\F$.
\end{itemize}
Similarly, let $\V$ be the $m$-dimensional anisotropic Hermitian space over $\E$ such that
\begin{itemize}
\item the signature of $\V_{v_i}$ is $(r,s)$ for $i=0,1$;
\item the signature of $\V_{v_i}$ is $(m,0)$ for $i=2,3$;
\item $\epsilon(\V_v) = 1$ for all nonarchimedean places $v$ of $\F$.
\end{itemize}
Note that such spaces $\W$ and $\V$ exist since $\prod_v \epsilon(\W_v) = \prod_v \epsilon(\V_v) = 1$.
By Lemma \ref{l:csd-char-global}, we may take two (unitary) characters $\chi_\V, \chi_\W$ of $\A_\E^\times/\E^\times$ such that
\begin{alignat*}{2}
 \chi_\V|_{\A^\times} & = \omega_{\E/\F}^m, \quad & \chi_{\V, v_i} & = \chi_V, \\
 \chi_\W|_{\A^\times} & = \omega_{\E/\F}^n, \quad & \chi_{\W, v_i} & = \chi_W
\end{alignat*}
for $i=0,1,2,3$.
Similarly, by Lemmas \ref{l:csd-char-local} and \ref{l:csd-char-global}, we may take conjugate-self-dual characters $\chi_1, \dots, \chi_n$ of $\A_\E^\times/\E^\times$ with sign $(-1)^{n-1}$ satisfying the following conditions:
\begin{itemize}
\item $\chi_{i,v_0} = \chi_{\kappa_i}$ for all $i$;
\item $\chi_{i,v_1} = \chi_{\kappa_{+,i}}$ for all $i$;
\item $\chi_{i,v_2} = \chi_{i,v_3} = \chi_{\kappa_{0,i}}$ for all $i$;
\item if $m \equiv n \bmod 2$, then 
\begin{equation}
\label{eq:assumption-epsilon}
 \epsilon(\tfrac{1}{2}, \chi_{i,v} \chi_{\V,v}^{-1}, \varPsi_{2,v}^\E) = 1 
\end{equation}
for all nonarchimedean places $v$ of $\F$ such that $\E_v \ne \F_v \times \F_v$, where $\varPsi_{2,v}^\E$ is the character of $\E_v$ given by $\varPsi_{2,v}^\E(x) = \varPsi_v(\Tr_{\E_v/\F_v}(\delta x))$.
\end{itemize}
In particular, $\chi_1, \dots, \chi_n, \chi_\V$ are pairwise distinct.

Define a global $A$-parameter $\varPhi$ for $\U_n$ by
\[
 \varPhi = \chi_1 \oplus \dots \oplus \chi_n,
\]
so that
\[
 \varPhi_{v_0} = \phi, \quad \varPhi_{v_1} = \phi_+, \quad \varPhi_{v_2} = \varPhi_{v_3} = \phi_0.
\]
Let $S_\varPhi$ be the global component group of $\varPhi$, which is defined formally as a free $\Z/2\Z$-module
\[
 S_\varPhi = (\Z/2\Z) e_1 \oplus \dots \oplus (\Z/2 \Z) e_n,
\]
where $e_i$ corresponds to $\chi_i$.
For each place $v$ of $\F$, let $S_{\varPhi_v}$ be the local component group of $\varPhi_v$ equipped with a natural homomorphism $S_\varPhi \rightarrow S_{\varPhi_v}$.
Note that this homomorphism is an isomorphism for $v = v_0,v_1,v_2,v_3$.
We denote by $e_{i,v}$ the image of $e_i$ in $S_{\varPhi_v}$.
Recall the compact group $S_{\varPhi, \A} = \prod_v S_{\varPhi_v}$ equipped with the diagonal map $\Delta : S_\varPhi \rightarrow S_{\varPhi, \A}$.
Define a continuous character $\eta = \bigotimes_v \eta_v$ of $S_{\varPhi,\A}$ by
\begin{itemize}
\item $\eta_{v_0} = \eta_{v_1} = \eta$;
\item $\eta_{v_2} = \eta_{v_3} = \eta_0$;
\item $\eta_v = \mathbbm{1}$ for all nonarchimedean places $v$ of $\F$.
\end{itemize}
Note that $\eta_v(e_{1,v} + \dots + e_{n,v}) = \epsilon(\W_v)$ for all $v$.
Let 
\[
 \varPi_v = \pi(\varPhi_v, \eta_v)
\]
be the irreducible tempered representation in the local $L$-packet $\Pi_{\varPhi_v}(\U(\W_v))$ associated to $\eta_v$.
Then we may form an irreducible representation $\varPi = \bigotimes_v \varPi_v$ of $\U(\W)(\A)$.
Since
\[
 (\eta \circ \Delta) (e_i) = \prod_v \eta_v(e_{i,v}) = 1
\]
for all $i$, it follows from Arthur's multiplicity formula \eqref{eq:amf} that $\varPi$ is automorphic.
Moreover, the partial standard $L$-function
\[
 L^S(s, \varPi, \chi_\V^{-1}) = L^S(s, \chi_1 \chi_{\V}^{-1}) \cdots L^S(s, \chi_n \chi_{\V}^{-1})
\]
is holomorphic and nonzero at $s = \frac{1}{2}(m-n+1)$.

We consider the global theta lift $\varSigma = \theta_{\V, \W, \chi_\V, \chi_\W, \varPsi}(\varPi)$ to $\U(\V)(\A)$.
Recall that the local theta lift $\varSigma_v = \theta_{\V_v, \W_v, \chi_{\V,v}, \chi_{\W,v}, \varPsi_v}(\varPi_v)$ to $\U(\V_v)$ is nonzero for $v = v_0$ by assumption and for $v \ne v_0$ by Lemmas \ref{l:local-theta-1}, \ref{l:local-theta-2}, \ref{l:local-theta-3}.
In the next section, we will show that there exist $\varphi_{1,v}, \varphi_{2,v} \in \omega_{\V_v,\W_v,\chi_{\V,v},\chi_{\W,v},\varPsi_v}$ and $f_{1,v}, f_{2,v} \in \varPi_v$ such that
\begin{equation}
\label{eq:nonvanishing-zeta}
 \ZZ(\varphi_{1,v}, \varphi_{2,v}, f_{1,v}, f_{2,v}) \ne 0
\end{equation}
for all $v$.
As explained in \S \ref{ss:global-theta}, this implies that $\varSigma$ is nonzero.
Thus we obtain an irreducible automorphic representation $\varSigma = \bigotimes_v \varSigma_v$ of $\U(\V)(\A)$.

Finally, we derive the information about $\varSigma_{v_0} = \theta_{r,s}(\pi)$ from the knowledge of $\varSigma_v$ for $v \ne v_0$ and Arthur's multiplicity formula.
Define a global $A$-parameter $\varPhi'$ for $\U_m$ by
\[
 \varPhi' = \chi_1 \chi_\V^{-1} \chi_\W \oplus \dots \oplus \chi_n \chi_\V^{-1} \chi_\W \oplus (\chi_\W \boxtimes S_{m-n}).
\]
Let $S_{\varPhi'}$ be the global component group of $\varPhi'$, which is defined formally as a free $\Z/2\Z$-module
\[
 S_{\varPhi'} = (\Z/2\Z) e'_1 \oplus \dots \oplus (\Z/2 \Z) e'_n \oplus (\Z/2 \Z) e'_0,
\]
where $e'_i$ corresponds to $\chi_1 \chi_\V^{-1} \chi_\W$ (resp.~$\chi_\W \boxtimes S_{m-n}$) if $i \ne 0$ (resp.~$i=0$).
For each place $v$ of $\F$, let $S_{\varPhi'_v}$ be the local component group of $\varPhi'_v$ equipped with a natural homomorphism $S_{\varPhi'} \rightarrow S_{\varPhi'_v}$.
We denote by $e'_{i,v}$ the image of $e'_i$ in $S_{\varPhi'_v}$.
By Lemmas \ref{l:local-theta-2} and \ref{l:local-theta-3}, $\varSigma$ occurs in the near equivalence class $L^2_{\varPhi'}(\U(\V))$.
Hence $\varSigma_v$ belongs to the local $A$-packet $\Pi_{\varPhi'_v}(\U(\V_v))$ for all $v$.
Since $\Pi_{\varPhi'_v}(\U(\V_v))$ is multiplicity-free, we may associate to $\varSigma_v$ a character $\eta'_v$ of $S_{\varPhi'_v}$.
Then it follows from Arthur's multiplicity formula \eqref{eq:amf} and Lemma \ref{l:epsilon} that
\[
 \prod_v \eta_v'(e_{i,v}') =
 \begin{cases}
  \epsilon(\tfrac{1}{2}, \chi_i \chi_{\V}^{-1}) & \text{if $i \ne 0$;} \\
  \epsilon(\tfrac{1}{2}, \varPi, \chi_{\V}^{-1}) & \text{if $i = 0$.}
 \end{cases}
\]
However, it follows from Lemma \ref{l:csd-char-epsilon} and \eqref{eq:assumption-epsilon} that
\[
 \epsilon(\tfrac{1}{2}, \chi_i \chi_{\V}^{-1}) = \prod_v \epsilon(\tfrac{1}{2}, \chi_{i,v} \chi_{\V,v}^{-1}, \varPsi_{2,v}^\E) = 1,
\]
so that 
\[
 \epsilon(\tfrac{1}{2}, \varPi, \chi_{\V}^{-1})
 = \epsilon(\tfrac{1}{2}, \chi_1 \chi_{\V}^{-1}) \cdots \epsilon(\tfrac{1}{2}, \chi_n \chi_{\V}^{-1}) = 1.
\]
Hence we have
\[
  \prod_v \eta_v'(e_{i,v}') = 1
\]
for all $i$.
On the other hand, by Lemmas \ref{l:local-theta-2} and \ref{l:local-theta-3}, we have $\eta_v' = \mathbbm{1}$ for all nonarchimedean places $v$ of $\F$.
Since $\eta_{v_2}' = \eta_{v_3}'$, we conclude that
\begin{equation}
\label{eq:main} 
 \eta_{v_0}'(e_{i,v_0}) = \eta_{v_1}'(e_{i,v_1})
\end{equation}
for all $i$.

\subsection{Completion of the proof}

We have shown that
\[
 \theta_{r,s}(\pi) = \sigma(\phi', \eta'), 
\]
where $\phi' = \varPhi'_{v_0}$ is the $A$-parameter for $\U_m$ and $\eta' = \eta'_{v_0}$ is the character of $S_{\phi'}$ as in the previous subsection.
More explicitly, we have
\[
 \phi' = \chi_{\kappa_1} \chi_V^{-1} \chi_W \oplus \dots \oplus \chi_{\kappa_n} \chi_V^{-1} \chi_W \oplus (\chi_W \boxtimes S_{m-n})
\]
by construction of $\varPhi'$ and 
\[
 \eta'(e_i') = \zeta_i \times 
 \begin{cases}
  \eta(e_i) & \text{if $i \ne 0$;} \\
  \epsilon(V) \cdot \epsilon(W) & \text{if $i = 0$}
 \end{cases}
\]
by \eqref{eq:main} and Lemma \ref{l:local-theta-1}, where we write $e_i = e_{i,v_0}$, $e'_i = e'_{i,v_0}$ for brevity and define $\zeta_i = \pm 1$ as in Lemma \ref{l:local-theta-1}.
Then we can apply the argument in the proof of Lemma \ref{l:local-theta-1} to deduce that 
\[
 \sigma(\phi', \eta') = A_\q(\lambda'),
\]
where $\q$ and $\lambda'$ are as given in Theorem \ref{t:main}\eqref{item:main1}.
This completes the proof of Theorem \ref{t:main}\eqref{item:main1}.

\begin{rem}
Since $\eta'$ is a character of $S_{\phi'}$, we must have 
\[
 \eta'(e_0') = \eta'(e_{i_0}')
\]
if $\kappa_{i_0} = \frac{m_0}{2}$ and $m-n=1$ (in which case $S_{\phi'} \ne \widetilde{S}_{\phi'}$).
This can also be proved directly using Theorem \ref{t:nonvanishing} and Lemma \ref{l:eta'}.
\end{rem}

\begin{rem}
In the global-to-local argument, we can also use theta lifts for $p$-adic unitary groups instead of those for real unitary groups.
For this, we need to modify the argument as follows.
\begin{itemize}
\item 
Instead of \cite{li90}, we use a result of Atobe--Gan \cite{atobe-gan} to describe some theta lifts for $p$-adic unitary groups explicitly.
\item 
Instead of \cite{mr19}, we use an analog of a result of M{\oe}glin \cite{moeglin06} (see also \cite[\S 6]{xu}) for $p$-adic unitary groups to describe the representations in some $A$-packets explicitly.
\end{itemize}
\end{rem}

\section{Nonvanishing of integrals of matrix coefficients}
\label{s:nonvanish}

In this section, we prove \eqref{eq:nonvanishing-zeta}, which completes the proof of Theorem \ref{t:main}.

\subsection{Doubling zeta integrals}

Let $F$ be a local field of characteristic zero and $E$ an \'etale quadratic algebra over $F$.
Let $W$ be an $n$-dimensional skew-Hermitian space over $E$.
We equip $W^\square = W \oplus W$ with the skew-Hermitian form given by
\[
 \langle (w_1,w_1'), (w_2, w_2') \rangle_{W^\square} = \langle w_1, w_2 \rangle_W - \langle w_1', w_2' \rangle_W.
\]
Then we have a natural embedding
\[
 i: \U(W) \times \U(W) \longhookrightarrow \U(W^\square).
\]
Define a complete polarization $W^\square = W^\triangle \oplus W^\bigtriangledown$ by 
\[
 W^\triangle = \{ (w,w) \, | \, w \in W \}, \quad
 W^\bigtriangledown = \{ (w,-w) \, | \, w \in W \}.
\]
Let $P$ be the maximal parabolic subgroup of $\U(W^\square)$ stabilizing $W^\triangle$.
For $s \in \C$ and a character $\chi$ of $E^\times$, we write 
\[
 I(s, \chi) = \Ind^{\U(W^\square)}_P(\chi |\cdot|_E^s),
\]
where $\Ind^{\U(W^\square)}_P$ denotes the normalized parabolic induction and $\chi |\cdot|_E^s$ is viewed as a character of $P$ via the natural homomorphism
\[
 P \longrightarrow \GL(W^\triangle) \overset{\det}\longrightarrow E^\times.
\]

Let $\pi$ be an irreducible tempered representation of $\U(W)$ and $(\cdot, \cdot) : \pi \times \pi \rightarrow \C$ an invariant Hermitian inner product.
We consider the zeta integral
\[
 Z(\FF_s, f_1, f_2) = \int_{\U(W)} \FF_s(i(g,1)) \overline{(\pi(g) f_1, f_2)} \, dg
\]
for $\FF_s \in I(s, \chi)$ and $f_1, f_2 \in \pi$.

\begin{lem}
\label{l:conv}
If $\Re(s) > -\frac{1}{2}$, then the integral $Z(\FF_s, f_1, f_2)$ is absolutely convergent.
\end{lem}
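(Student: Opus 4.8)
The plan is to dominate the integrand $|\FF_s(i(g,1))\,\overline{(\pi(g)f_1,f_2)}|$ pointwise by an integrable function on $\U(W)$; this combines a temperedness estimate for the matrix coefficient with an Iwasawa estimate for the restricted section $g\mapsto\FF_s(i(g,1))$.

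For the matrix coefficient I would use the standard weak inequality: since $\pi$ is irreducible and tempered, there are $C>0$ and $d\ge 0$ with
\[
 |(\pi(g)f_1,f_2)|\ \le\ C\,\Xi_{\U(W)}(g)\,(1+\sigma(g))^{d}\qquad(g\in\U(W)),
\]
where $\Xi_{\U(W)}$ is the Harish-Chandra function of $\U(W)$ and $\sigma$ the usual logarithmic norm. For the section I would use the Iwasawa decomposition $\U(W^\square)=N_PM_PK_0$ relative to the Siegel parabolic $P$ (with $K_0$ a maximal compact subgroup of $\U(W^\square)$): writing $i(g,1)=n\,m(a(g))\,k$, and using that $\FF_s$ is continuous, hence bounded on $K_0$, and transforms under $P$ by $\delta_P^{1/2}\cdot(\chi|\cdot|_E^{s}\circ\det)$ with $\delta_P(m(a))=|\det a|_E^{\,2\rho_P}$ and $2\rho_P=\dim_E W$, one gets $|\FF_s(i(g,1))|\le C_{\FF_s}\,|\det a(g)|_E^{\,\rho_P+\Re(s)}$. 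The essential point is the identification of this cocycle: restricting $i(g,1)$ to the polarization $W^\square=W^\triangle\oplus W^\bigtriangledown$ — under which $i(g,1)$ carries $(w,w)$ to $\tfrac12\big((1+g)w,(1+g)w\big)+\tfrac12\big((g-1)w,-(g-1)w\big)$ — one reads off that on the dense open subset of $\U(W)$ where $1+g\in\GL(W)$ the $M_P$-component $a(g)$ is, modulo $M_P\cap K_0$, the image of $(1+g)^{-1}$; hence
\[
 |\FF_s(i(g,1))|\ \ll_{\FF_s,\,\Re(s)}\ |\det(1+g)|_E^{\,-\rho_P-\Re(s)}
\]
there, and since $g\mapsto\FF_s(i(g,1))$ is in any case smooth on all of $\U(W)$, only its decay at infinity is at issue.

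To conclude I would pass to the Cartan decomposition $g=\kappa_1\exp(H)\kappa_2$ of $\U(W)$. As the compact factors $i(\kappa_j,1)$ range over the fixed compact subset $i(K_W\times\{1\})$ of $\U(W^\square)$ (with $K_W$ a maximal compact of $\U(W)$), they alter $|\det a(g)|_E$ by at most a bounded factor, so the estimate reduces to $g=\exp(H)$ with $H$ in the dominant chamber $\mathfrak a^{+}$, for which $1+g$ is automatically invertible and $|\det(1+g)|_E$ is comparable to $e^{\ell(H)}$, where $\ell(H)$ is twice the sum of the $F$-split coordinates of $H$. Inserting this together with $\Xi_{\U(W)}(\exp H)\asymp e^{-\rho_G(H)}(1+\|H\|)^{d'}$ — $\rho_G$ being half the sum of the positive restricted roots of $G=\U(W)$ — and the integration formula $dg\asymp e^{2\rho_G(H)}(1+\|H\|)^{d''}\,dH$ on $\mathfrak a^{+}$, one finds that $Z(\FF_s,f_1,f_2)$ is dominated by
\[
 \int_{\mathfrak a^{+}}\exp\!\big(\rho_G(H)-(\rho_P+\Re(s))\,\ell(H)\big)\,(1+\|H\|)^{d+d'+d''}\,dH .
\]
Using the explicit form of $\rho_G$, the coefficient of the $i$-th $F$-split coordinate of $H$ in the exponent is $1-2i-2\Re(s)$ for $i=1,\dots,\operatorname{rk}_F\U(W)$, which is strictly negative for every such $i$ precisely when $\Re(s)>-\tfrac12$; in that range the exponential decay absorbs the polynomial factor and the integral converges, which gives the lemma. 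The nonarchimedean case is the same with the integral over $\mathfrak a^{+}$ replaced by a sum over dominant cocharacters, and the split case $E=F\times F$ is handled analogously via the Cartan decomposition of $\U(W)\cong\GL_n(F)$.

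The step I expect to be the main obstacle is the geometric one: pinning down the cocycle — namely, that the Iwasawa $M_P$-projection of $i(g,1)$ is comparable to $(1+g)^{-1}$ (and controlling the contribution of the remaining $N_P$- and $K_0$-parts) — and then carrying out the exponent bookkeeping, matching $\rho_G$ against the functional $\ell$, so that the threshold for convergence comes out to be exactly $\Re(s)>-\tfrac12$, in particular independently of $\dim W$. The reduction to the dominant Cartan element and the convergence of the resulting elementary integral are then routine.
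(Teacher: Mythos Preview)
The paper gives no argument of its own here; it simply cites \cite[Lemma 9.5]{gi1} and \cite[Lemma 7.2]{yamana}.  Your overall strategy --- bound the matrix coefficient by $\Xi_{\U(W)}(g)(1+\sigma(g))^d$, bound the restricted section $g\mapsto\FF_s(i(g,1))$ explicitly, and reduce via the Cartan decomposition of $\U(W)$ to an elementary exponential integral --- is precisely the strategy of those references, and your final exponent bookkeeping (coefficient $1-2k-2\Re(s)$ on the $k$-th split coordinate, strictly negative for all $k\ge 1$ exactly when $\Re(s)>-\tfrac12$) is correct.

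There is, however, a genuine gap in your treatment of the section.  First, the projection of $i(g,1)(w,w)$ onto $W^\triangle$ along $W^\bigtriangledown$ does \emph{not} read off the Iwasawa $M_P$-component: that would be valid only if $i(g,1)\in P$.  What you have computed is rather the Bruhat factorization $i(g,1)=p\,\bar n$ in the big cell $P\bar N_P$ (valid exactly when $1+g$ is invertible), and one must still control the factor $\FF_s(\bar n)$.  Second --- and this is the real problem --- the singular locus $\{g:1+g\notin\GL(W)\}$ is \emph{not} compact, so the dismissal ``only decay at infinity is at issue'' is insufficient.  Already in $\U(1,1)$, with $X=\begin{pmatrix}0&1\\1&0\end{pmatrix}$ and $\kappa_1=\diag(-1,1)\in K_W$, one checks $\det(1+\kappa_1\exp(tX))=0$ for \emph{every} $t$, so your bound is vacuous along this whole ray to infinity.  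Relatedly, the assertion that left multiplication by $i(\kappa_1,1)\in K_0$ alters $|\det a(\cdot)|_E$ only by a bounded factor is false: the Iwasawa $A$-part is right- but not left-$K_0$-stable (in $\SL_2(\R)$ the Weyl element inverts it).  The cited references repair this by bounding $|\FF_s(i(g,1))|$ from the outset by a bi-$K_W$-invariant function of $g$ --- e.g.\ by passing through the Cartan decomposition of $\U(W^\square)$ and comparing the $\U(W^\square)$-Cartan data of $i(g,1)$ with the $\U(W)$-Cartan data of $g$ --- rather than through $|\det(1+g)|$.  Once a uniform estimate of the shape $|\FF_s(i(g,1))|\le C\,e^{-(\rho_P+\Re s)\ell(H)}$ (with $H$ the Cartan component of $g$) is established, the remainder of your argument is correct as written.
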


\begin{proof}
See \cite[Lemma 9.5]{gi1}, \cite[Lemma 7.2]{yamana}.
\end{proof}

\subsection{Integrals of matrix coefficients}
\label{ss:matcoeff-integral}

Let $V$ be an $m$-dimensional Hermitian space over $E$.
Recall the symplectic space $\W = V \otimes_E W$ over $F$ and take a complete polarization $\W = \X \oplus \Y$.
Then we may realize the Weil representation $\omega = \omega_{V,W,\chi_V,\chi_W,\psi}$ of $\U(V) \times \U(W)$ on $S(\X)$, where 
\begin{itemize}
\item 
if $F$ is archimedean, we denote by $\mathcal{S}(\X)$ the space of Schwartz functions on $\X$ and by $S(\X)$ the subspace of $\mathcal{S}(\X)$ consisting of functions which correspond to polynomials in the Fock model;
\item 
if $F$ is nonarchimedean, we denote by $S(\X)$ the space of locally constant compactly supported functions on $\X$.
\end{itemize}
Similarly, consider the symplectic space $\W^\square = V \otimes_E W^\square$ over $F$ and define a complete polarization $\W^\square = \W^\triangle \oplus \W^\bigtriangledown$ by
\[
 \W^\triangle = V \otimes_E W^\triangle, \quad
 \W^\bigtriangledown = V \otimes_E W^\bigtriangledown.
\]
Then we may realize the Weil representation $\Omega = \omega_{V,W^\square,\chi_V,\mathbbm{1},\psi}$ of $\U(V) \times \U(W^\square)$ on $S(\W^\bigtriangledown)$.
Moreover, there is a $\U(W) \times \U(W)$-equivariant isomorphism
\[
 T: (\omega \boxtimes (\bar{\omega} \otimes (\chi_V \circ \det)), S(\X) \otimes S(\X)) \overset{\cong}{\longrightarrow} (\Omega \circ i, S(\W^\bigtriangledown))
\]
such that 
\[
 T(\varphi_1 \otimes \bar{\varphi}_2)(0) = (\varphi_1, \varphi_2)
\]
for $\varphi_1, \varphi_2 \in S(\X)$, where $(\cdot,\cdot) : S(\X) \times S(\X) \rightarrow \C$ is an invariant Hermitian inner product.

On the other hand, we may define a $\U(W^\square)$-equivariant map
\[
 \FF: S(\W^\bigtriangledown) \longrightarrow I \left( \frac{m-n}{2}, \chi_V \right)
\]
by 
\[
 \FF(\varphi)(g) = (\Omega(g) \varphi)(0)
\]
for $\varphi \in S(\W^\bigtriangledown)$ and $g \in \U(W^\square)$.
Hence, if $m \ge n$, then we obtain a map
\[
 \ZZ : S(\X) \times S(\X) \times \pi \times \pi \longrightarrow \C
\]
defined by
\begin{align*}
 \ZZ(\varphi_1, \varphi_2, f_1, f_2) & = Z(\FF(T(\varphi_1 \otimes \bar{\varphi}_2)), f_1, f_2) \\
 & = \int_{\U(W)} (\omega(g) \varphi_1, \varphi_2) \overline{(\pi(g) f_1, f_2)} \, dg
\end{align*}
for $\varphi_1, \varphi_2 \in S(\X)$ and $f_1, f_2 \in \pi$, where the integral is absolutely convergent by Lemma \ref{l:conv}.
We also write 
\[
 \ZZ = \ZZ_{V,W,\chi_V,\chi_W,\psi}(\pi)
\]
to indicate the data we are using.
Obviously, for $\varphi_2 \in S(\X)$ and $f_2 \in \pi$, the map
\[
 \varphi_1 \otimes \bar{f}_1 \longmapsto \ZZ(\varphi_1, \varphi_2, f_1, f_2)
\]
defines an element in
\[
 \Hom_{\U(W)}(\omega \otimes \bar{\pi}, \C) \cong \Hom_{\U(W)}(\omega, \pi).
\]
Hence, if $\ZZ_{V,W,\chi_V,\chi_W,\psi}(\pi)$ is nonzero, then the theta lift $\theta_{V,W,\chi_V,\chi_W,\psi}(\pi)$ is nonzero.
In fact, we have the following converse.

\begin{prop}
\label{p:key}
Let $\pi$ be an irreducible tempered representation (resp.~a discrete series representation) of $\U(W)$ if $F$ is nonarchimedean or $E = F \times F$ (resp.~$F = \R$ and $E = \C$).
Assume that $m \ge n$.
Then $\ZZ_{V,W,\chi_V,\chi_W,\psi}(\pi)$ is nonzero if and only if $\theta_{V,W,\chi_V,\chi_W,\psi}(\pi)$ is nonzero.
\end{prop}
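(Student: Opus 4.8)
The implication ``$\ZZ_{V,W,\chi_V,\chi_W,\psi}(\pi)\ne0\Rightarrow\theta_{V,W,\chi_V,\chi_W,\psi}(\pi)\ne0$'' is already established in the discussion preceding the proposition, so the content is the converse. Write $\omega=\omega_{V,W,\chi_V,\chi_W,\psi}$, $\theta(\pi)=\theta_{V,W,\chi_V,\chi_W,\psi}(\pi)$, $\ZZ=\ZZ_{V,W,\chi_V,\chi_W,\psi}(\pi)$, and set $s_0=\tfrac{m-n}{2}\ge0$. One useful preliminary observation is that on the diagonal $\varphi_1=\varphi_2$, $f_1=f_2$ the functional $\ZZ$ is a \emph{non-negative} $\U(V)$-invariant Hermitian form: $(\omega(g)\varphi,\varphi)$ and $\overline{(\pi(g)f,f)}$ are positive-definite functions on $\U(W)$, hence so is their product, and the integral (convergent by Lemma \ref{l:conv}) of a positive-definite function against Haar measure is $\ge0$. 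Thus $\ZZ\ne0$ if and only if $\ZZ(\varphi,\varphi,f,f)>0$ for some $\varphi$ and $f$, and since $\ZZ$ factors through $\Theta_{V,W,\chi_V,\chi_W,\psi}(\pi)$, which has $\theta(\pi)$ as its unique irreducible quotient, the task is to show that this holds whenever $\theta(\pi)\ne0$.

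The plan is to read this off from the doubling method of \S\ref{ss:matcoeff-integral} and then resolve the remaining ambiguity via a see-saw, following and modifying Atobe. By Lemma \ref{l:conv} the zeta integral $Z(\FF_{s_0},f_1,f_2)$ converges, and a support (bump-function) argument at $i(1,1)$ --- using that $\overline{(\pi(g)f,f)}$ is a nonzero continuous function --- shows that $Z(\FF_{s_0},f_1,f_2)\ne0$ for suitable data, so that doubling produces a \emph{nonzero} $\U(W)\times\U(W)$-equivariant functional $\mathfrak{z}_{s_0}$ on $I(s_0,\chi_V)\otimes\pi\otimes\bar\pi$. From the definition $\ZZ(\varphi_1,\varphi_2,f_1,f_2)=Z(\FF(T(\varphi_1\otimes\bar\varphi_2)),f_1,f_2)$ and the fact that $T$ is an isomorphism onto $S(\W^\bigtriangledown)$, we have $\ZZ\ne0$ if and only if $\mathfrak{z}_{s_0}$ does not vanish on the submodule $R(V):=\FF(S(\W^\bigtriangledown))\subseteq I(s_0,\chi_V)$. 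The partial Rallis inner product argument of \cite[\S11]{gqt} already gives that $\mathfrak{z}_{s_0}$ is nonzero on a sum of such submodules $R(V')$ over $m$-dimensional Hermitian spaces $V'$ (equivalently, $\ZZ_{V',W}(\pi)\ne0$ for \emph{some} such $V'$, a priori not our $V$), using the structure of the degenerate principal series $I(s_0,\chi_V)$ for $\U(W^\square)$ (Kudla--Rallis, Kudla--Sweet, or a direct archimedean analysis). What is missing is precisely the separation of $V$ from the other Hermitian space(s) of the same dimension.

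To carry this out I would follow the argument of Atobe \cite{atobe}, modified so as to track functionals rather than merely dimensions of $\Hom$-spaces. The see-saw identity (in the Fourier--Jacobi setting for $\U(W)$, as used by Atobe) identifies $\Hom_{\U(W)}(\omega\otimes\bar\pi,\C)$ --- and, more precisely, the restriction $\mathfrak{z}_{s_0}|_{R(V)}$ viewed through $\FF\circ T$, i.e.\ the functional $\ZZ$ itself --- with a Gan--Gross--Prasad branching functional for $\U(W)$ and a smaller unitary group; by the theorem of He \cite{he} this branching multiplicity is nonzero precisely when an explicit local root number attached to $\phi$ and $\chi_V$ takes the prescribed value. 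One then checks, using Lemma \ref{l:csd-char-epsilon} for the relevant $\epsilon$-factors of conjugate-self-dual characters (and distinguishing $m\equiv n$ from $m\not\equiv n\bmod 2$), that this root-number condition holds exactly when $\theta(\pi)\ne0$; for $\pi$ a discrete series this last equivalence is Atobe's Theorem \ref{t:nonvanishing}, whose combinatorics is designed to match the root-number bookkeeping. Combining the two directions yields $\theta(\pi)\ne0\Leftrightarrow\ZZ\ne0$.

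The main obstacle is the precise matching in this last step: one must verify that the see-saw isomorphism genuinely carries the doubling functional $\mathfrak{z}_{s_0}|_{R(V)}$ to the Gan--Gross--Prasad functional supplied by He's theorem --- not merely that both $\Hom$-spaces happen to be nonzero simultaneously --- and that He's local root number, computed with the normalizations of Whittaker data and additive characters fixed in \S\ref{ss:whit}, agrees on the nose with the non-vanishing criterion for $\theta_{V,W}(\pi)$. The archimedean case, where the root numbers are those of the characters $\chi_\kappa$ as in Lemma \ref{l:csd-char-epsilon}(\ref{item:csd-char-epsilon3}), together with the parity bookkeeping for $m-n$ odd versus even, is the part most prone to sign errors and is where the ``modification'' of Atobe's argument has to be done with care.
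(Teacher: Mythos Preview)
Your overall strategy---modify Atobe's see-saw argument so that it tracks the functional $\ZZ$ rather than merely the nonvanishing of $\theta(\pi)$---is the paper's approach, and your positivity observation is correct (though the paper does not use it). But the execution has two real problems.

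First, the see-saw step is not a one-shot identification of $\ZZ$ with a Gan--Gross--Prasad functional. What the paper proves (Lemma~\ref{l:seesaw}) is that if $W\subset W'$ with $\dim W'=\dim W+1$, if $\pi'$ is a discrete series of $\U(W')$ with $\hat\pi$ occurring in $(\hat\pi'|_{\U(W)})_{\disc}$, and if $\ZZ_{V,W'}(\pi')\ne0$, then $\ZZ_{V,W}(\pi)\ne0$; this is shown by a direct manipulation of the absolutely convergent integral (Lemmas~\ref{l:cont} and~\ref{l:gK-vs-L2}). He's theorem enters only to \emph{construct} such a $\pi'$ with $\theta_{V,W'}(\pi')\ne0$ (this is Atobe's Lemmas~5.1--5.3, restated as Lemma~\ref{l:induction}); one then invokes the inductive hypothesis on $\dim V-\dim W' < \dim V-\dim W$ to get $\ZZ_{V,W'}(\pi')\ne0$. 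The GGP input supplies $\pi'$; it does not evaluate $\ZZ$, and there is no ``see-saw isomorphism carrying $\mathfrak z_{s_0}|_{R(V)}$ to a GGP functional'' to be verified.

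Second, and fatally, your argument has no base case. The inductive reduction terminates when $k_\lambda\ge0$ and $t=0$, i.e.\ $(r,s)=(r_\lambda+l,s_\lambda+l)$, where no further see-saw step of the above type is available. The paper handles this case (Lemma~\ref{l:base}) by an entirely different method: one shows the lowest $K$-type $\mu$ of $\pi$ has minimal $(r,s)$-degree (using Paul's results in the almost-equal-rank case, Lemma~\ref{l:lkt->mindeg}), then uses the \emph{compact} see-saw $\U(V_1)\times\U(V_2)\subset\U(V)$ with $V_1,V_2$ definite to write $(\omega(g)\varphi,\varphi)$ as a product of matrix coefficients of unitary highest weight modules $\pi_1,\pi_2$ whose lowest $K$-types $\mu_1,\mu_2$ satisfy $\mu\subset\mu_1\otimes\mu_2$, and finally appeals to Li's explicit computation \cite[Theorem~4.1]{li90} with Flensted--Jensen functions to conclude that $\int_{\U(W)}\Psi_1(g)\Psi_2(g)\overline{\Psi(g)}\,dg\ne0$. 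Nothing in your proposal addresses this step, and without it the induction does not close.
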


If $F$ is nonarchimedean or $E = F \times F$, then this proposition has been proved in \cite[Proposition 11.5]{gqt}, \cite[Lemma 8.6]{yamana}.
The rest of this section is devoted to the proof in the remaining case, which is the key technical innovation in this paper. 

\subsection{Inductive step}

From now on, we assume that $F=\R$ and $E=\C$.
Let $\pi$ be a discrete series representation of $G = \U(W)$.
We denote by $\hat{\pi}$ the unitary completion of $\pi$, i.e.~a unitary representation of $G$ on a Hilbert space $\HH$ such that $\pi$ is isomorphic to the $(\g,K)$-module on the space $\HH_K$ of $K$-finite vectors in $\HH$.
Here $\g$ is the complexified Lie algebra of $G$ and $K$ is the maximal compact subgroup of $G$ as in \S \ref{ss:ds}.
We regard $\ZZ_{V,W,\chi_V,\chi_W,\psi}(\pi)$ as a map on
\[
 S(\X) \times S(\X) \times \HH_K \times \HH_K.
\]
By abuse of notation, we also regard the Weil representation $\omega$ as a smooth representation on $\mathcal{S}(\X)$ equipped with the usual topology.

\begin{lem}
\label{l:cont}
The integral
\[
 \hat{\ZZ}(\varphi_1, \varphi_2, f_1, f_2) = \int_G (\omega(g) \varphi_1, \varphi_2) \overline{(\hat{\pi}(g) f_1, f_2)} \, dg 
\]
is absolutely convergent for $\varphi_1, \varphi_2 \in \mathcal{S}(\X)$ and $f_1, f_2 \in \HH$, and defines a separately continuous map
\[
 \hat{\ZZ} : \mathcal{S}(\X) \times \mathcal{S}(\X) \times \HH \times \HH \longrightarrow \C.
\]
\end{lem}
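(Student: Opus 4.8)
The plan is to reduce the whole statement to a single a priori estimate: I will produce a continuous seminorm $\nu$ on the Fr\'echet space $\mathcal{S}(\X)$ and a constant $C>0$ with
\[
|\hat\ZZ(\varphi_1,\varphi_2,f_1,f_2)|\le C\,\nu(\varphi_1)\,\nu(\varphi_2)\,\|f_1\|\,\|f_2\|
\]
for all $\varphi_1,\varphi_2\in\mathcal{S}(\X)$ and $f_1,f_2\in\HH$, where $\|\cdot\|$ is the norm of $\HH$. Granting this, the absolute convergence of the integral and the separate continuity of $\hat\ZZ$ (which is in fact jointly bounded, hence jointly continuous) are immediate, and the compatibility with $\ZZ_{V,W,\chi_V,\chi_W,\psi}(\pi)$ on $S(\X)\times S(\X)\times\HH_K\times\HH_K$ follows because both are given by the same integral, which is absolutely convergent there by Lemma \ref{l:conv}; in particular no separate density argument is needed to pass from $\HH_K$ to $\HH$, since the bound above is proved directly for all Hilbert vectors.

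To prove the estimate I would bound the two matrix-coefficient factors of the integrand separately. Since $\pi$ is a discrete series (hence square-integrable) representation of $G=\U(W)$, Schur orthogonality shows that $f\mapsto(\hat\pi(\cdot)f,f')$ is, up to a scalar (the inverse formal degree), an isometric $G$-embedding of $\HH$ into $L^2(G)$, so $g\mapsto(\hat\pi(g)f_1,f_2)$ lies in $L^2(G)$ with $L^2$-norm $\le C\|f_1\|\,\|f_2\|$ for all $f_1,f_2\in\HH$. For the Weil-representation factor, $g\mapsto(\omega(g)\varphi_1,\varphi_2)$ is a matrix coefficient of the unitary restriction $\omega|_G$, hence bounded by an invariant Hermitian norm of $\varphi_1$ and $\varphi_2$; the essential input is that, because $m>n$, it moreover decays rapidly: there is a continuous seminorm $\nu$ on $\mathcal{S}(\X)$ and a real number $\rho>1$ with
\[
|(\omega(g)\varphi_1,\varphi_2)|\le\nu(\varphi_1)\,\nu(\varphi_2)\,\Xi_G(g)^{\rho}\qquad(g\in G),
\]
where $\Xi_G$ is Harish-Chandra's spherical function. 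This is the quantitative form of the bounds underlying Lemma \ref{l:conv} (cf.\ \cite[Lemma 9.5]{gi1}, \cite[Lemma 7.2]{yamana}) combined with the decay estimates for matrix coefficients of the oscillator representation in the range $m>n$ (cf.\ \cite{li90}); via the doubling reformulation of \S\ref{ss:matcoeff-integral} it says precisely that the Siegel--Weil section $\FF(T(\varphi_1\otimes\bar\varphi_2))\in I(\tfrac{m-n}{2},\chi_V)$, restricted along $i(\U(W),1)$, is suitably rapidly decreasing, uniformly over Schwartz inputs.

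Since $\rho>1$ we have $\Xi_G^{\rho}\in L^2(G)$, so the Weil-representation factor lies in $L^2(G)$ with $\|(\omega(\cdot)\varphi_1,\varphi_2)\|_{L^2(G)}\le\nu(\varphi_1)\nu(\varphi_2)\,\|\Xi_G^{\rho}\|_{L^2(G)}$, and the Cauchy--Schwarz inequality then yields the displayed a priori estimate, completing the proof. The main obstacle is the Weil-representation decay estimate: one must upgrade the known bounds from the polynomial, $K$-finite vectors of $S(\X)$ (where the explicit Fock-model formulas make the estimate essentially transparent, and the $K$-finite case is already covered by the work of Li and by the doubling-method literature) to arbitrary Schwartz vectors, with the constant controlled by a continuous Fr\'echet seminorm, and to extract the improved exponent $\rho>1$ from the strict inequality $m>n$ --- this last point being where the hypothesis is genuinely used; the discrete-series input and the Cauchy--Schwarz step are then routine.
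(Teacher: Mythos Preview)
Your plan differs from the paper's, and the difference matters. You aim for a joint a priori bound $|\hat\ZZ|\le C\,\nu(\varphi_1)\nu(\varphi_2)\|f_1\|\|f_2\|$ via a pointwise estimate $|(\omega(g)\varphi_1,\varphi_2)|\le\nu(\varphi_1)\nu(\varphi_2)\,\Xi_G(g)^\rho$ with $\rho>1$ and $\nu$ a continuous seminorm on $\mathcal{S}(\X)$. As you yourself flag, this estimate is the ``main obstacle'': the references you cite (\cite[Lemma~9.5]{gi1}, \cite[Lemma~7.2]{yamana}, \cite{li90}) give such decay for $K$-finite or smooth sections of the degenerate principal series, and upgrading to a bound governed by a continuous Fr\'echet seminorm on arbitrary Schwartz inputs is real work that your proposal does not carry out. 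Your exponent $\rho>1$ also rests on the strict inequality $m>n$, whereas the lemma sits inside the proof of Proposition~\ref{p:key}, which is stated for $m\ge n$; at $m=n$ one should only expect $\rho=1$, and $\Xi_G\notin L^2(G)$, so your Cauchy--Schwarz step would break down there.

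The paper sidesteps all of this. For absolute convergence it uses only that $g\mapsto(\omega(g)\varphi_1,\varphi_2)$ lies in $L^2(G)$ for \emph{Schwartz} $\varphi_i$ --- this is \cite[Theorem~3.2]{li89}, proved directly without passing through a pointwise $\Xi$-bound --- and then pairs it via Cauchy--Schwarz against the $L^2$ matrix coefficient of the discrete series $\hat\pi$. Separate continuity in $f_1,f_2$ follows immediately from Schur orthogonality, exactly as you say. For separate continuity in $\varphi_1$ (and symmetrically $\varphi_2$), the paper does \emph{not} seek a seminorm estimate: given a sequence $\varphi_{1,i}\to\varphi_1$ in $\mathcal{S}(\X)$, it invokes \cite[Lemme~5]{weil1} to produce a single $\varphi_0\in\mathcal{S}(\X)$ with $|\varphi_{1,i}|\le\varphi_0$ for all $i$, observes that the envelope $\Phi(g)=\int_\X\varphi_0(x)\,|\omega(g^{-1})\varphi_2(x)|\,dx$ is square-integrable (by the same argument from \cite{li89}), and applies dominated convergence. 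So the paper proves only separate continuity --- which is all the statement claims --- with a much softer input. Your stronger estimate, if you could establish it, would yield joint continuity and is a perfectly reasonable target; but as written the proposal leaves its decisive step as an acknowledged obstacle rather than a proof.
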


\begin{proof}
By \cite[Theorem 3.2]{li89}, the function $g \mapsto (\omega(g) \varphi_1, \varphi_2)$ is square-integrable, which implies the absolute convergence.
The separate continuity follows from the argument in the proof of \cite[Lemma 6.2]{li89}, which we include here for the convenience of the reader.
We have
\[
 |\hat{\ZZ}(\varphi_1, \varphi_2, f_1, f_2)|^2 \le C \cdot \int_G |(\hat{\pi}(g) f_1, f_2)|^2 \, dg = \frac{C}{\deg \hat{\pi}} \cdot \| f_1 \|^2 \cdot \| f_2 \|^2
\]
with 
\[
 C = \int_G |(\omega(g) \varphi_1, \varphi_2)|^2 \, dg, 
\]
where $\deg \hat{\pi}$ is the formal degree of $\hat{\pi}$ and $\| \cdot \|$ is the Hilbert norm on $\HH$.
This implies the separate continuity in the third and fourth variables.
Let $\{ \varphi_{1,i} \}_{i \ge 1}$ be a sequence converging to $\varphi_1$ in $\mathcal{S}(\X)$.
By \cite[Lemme 5]{weil1}, there exists $\varphi_0 \in \mathcal{S}(\X)$ such that 
\[
 |\varphi_{1,i}(x)| \le \varphi_0(x)
\]
for all $i \ge 1$ and $x \in \X$.
Fix $\varphi_2 \in \mathcal{S}(\X)$ and put 
\[
 \Phi(g) = \int_{\X} \varphi_0(x) |\omega(g^{-1}) \varphi_2(x)| \, dx, 
\]
so that 
\[
 |(\omega(g) \varphi_{1,i}, \varphi_2)| = \left| \int_{\X} \varphi_{1,i}(x) \overline{\omega(g^{-1})\varphi_2(x)} \, dx \right| \le \Phi(g)
\]
for all $i \ge 1$ and $g \in G$.
Note that $\Phi$ is square-integrable (see the proof of \cite[Theorem 3.2]{li89}).
Then we have
\[
 |\hat{\ZZ}(\varphi_{1,i}, \varphi_2, f_1, f_2)|^2 \le C' \cdot \int_G |(\omega(g) \varphi_{1,i}, \varphi_2)|^2 \, dg \le C' \cdot \int_G \Phi(g)^2 \, dg
\]
with 
\[
 C' = \int_G |(\hat{\pi}(g) f_1, f_2)|^2 \, dg = \frac{1}{\deg \hat{\pi}} \cdot \| f_1 \|^2 \cdot \| f_2 \|^2
\]
for all $i \ge 1$.
Hence the dominated convergence theorem implies the separate continuity in the first variable.
The proof for the second variable is similar.
\end{proof}

\begin{lem}
\label{l:gK-vs-L2}
Assume that $\ZZ_{V,W,\chi_V,\chi_W,\psi}(\pi) \ne 0$.
Let $f_1 \in \HH$ be a nonzero element.
Then there exist $\varphi_1, \varphi_2 \in \mathcal{S}(\X)$ and $f_2 \in \HH$ such that 
\[
 \hat{\ZZ}(\varphi_1, \varphi_2, f_1, f_2) \ne 0.
\]
\end{lem}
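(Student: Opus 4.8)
The plan is to propagate the algebraic nonvanishing of $\ZZ=\ZZ_{V,W,\chi_V,\chi_W,\psi}(\pi)$ to the analytic functional $\hat{\ZZ}$ with the prescribed vector $f_1$, using the $G$-invariance of $\hat{\ZZ}$ together with the irreducibility of the discrete series representation $\hat{\pi}$. First I would note that for $\varphi_1,\varphi_2\in S(\X)$ and $f_1,f_2\in\HH_K$ the matrix coefficient $g\mapsto(\hat{\pi}(g)f_1,f_2)$ agrees with $g\mapsto(\pi(g)f_1,f_2)$, so that the restriction of $\hat{\ZZ}$ to $S(\X)\times S(\X)\times\HH_K\times\HH_K$ is exactly $\ZZ$ (both integrals converge absolutely, by Lemmas \ref{l:conv} and \ref{l:cont}). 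Since $\ZZ\ne0$ by hypothesis, $\hat{\ZZ}$ is not identically zero on $\mathcal{S}(\X)\times\mathcal{S}(\X)\times\HH\times\HH$.

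Next I would record the $G$-invariance of $\hat{\ZZ}$. As $G$ is unimodular, substituting $g\mapsto g_0^{-1}gg_0$ in the integral defining $\hat{\ZZ}$ and using the unitarity of $\omega$ and of $\hat{\pi}$ gives
\[
 \hat{\ZZ}(\omega(g_0)\varphi_1,\omega(g_0)\varphi_2,\hat{\pi}(g_0)f_1,\hat{\pi}(g_0)f_2)=\hat{\ZZ}(\varphi_1,\varphi_2,f_1,f_2)
\]
for all $g_0\in G$, the change of variables being justified by the absolute convergence in Lemma \ref{l:cont}.

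Now suppose, for contradiction, that $\hat{\ZZ}(\varphi_1,\varphi_2,f_1,f_2)=0$ for all $\varphi_1,\varphi_2\in\mathcal{S}(\X)$ and all $f_2\in\HH$. For each fixed $g_0$, as $(\varphi_1,\varphi_2,f_2)$ ranges over $\mathcal{S}(\X)\times\mathcal{S}(\X)\times\HH$ so does $(\omega(g_0)\varphi_1,\omega(g_0)\varphi_2,\hat{\pi}(g_0)f_2)$, since $\omega(g_0)$ and $\hat{\pi}(g_0)$ are bijective; hence the invariance relation gives $\hat{\ZZ}(\varphi_1,\varphi_2,\hat{\pi}(g_0)f_1,f_2)=0$ for all $\varphi_1,\varphi_2,f_2$ and all $g_0\in G$. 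Since $\hat{\pi}$ is irreducible and $f_1\ne0$, the linear span of $\{\hat{\pi}(g_0)f_1\mid g_0\in G\}$ is dense in $\HH$ (its closure is a nonzero closed invariant subspace). As $\hat{\ZZ}$ is separately continuous in its third variable by Lemma \ref{l:cont}, this forces $\hat{\ZZ}(\varphi_1,\varphi_2,f_1',f_2)=0$ for all $f_1'\in\HH$, i.e.\ $\hat{\ZZ}\equiv0$, contradicting the first paragraph. Therefore $\varphi_1,\varphi_2\in\mathcal{S}(\X)$ and $f_2\in\HH$ with $\hat{\ZZ}(\varphi_1,\varphi_2,f_1,f_2)\ne0$ exist.

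The only point requiring real attention is the first step, namely the identification of $\hat{\ZZ}$ with $\ZZ$ on $K$-finite smooth vectors: this is the conceptual bridge between the $(\g,K)$-module picture, where nonvanishing is known, and the Hilbert-space picture, where the continuity and cyclicity arguments take place. Once this identification is in hand, the remainder is a soft density argument that uses nothing beyond Lemma \ref{l:cont} and Schur irreducibility of $\hat{\pi}$.
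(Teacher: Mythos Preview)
Your proof is correct, but it takes a genuinely different route from the paper's argument. Both begin by noting that $\hat{\ZZ}$ restricted to $K$-finite data equals $\ZZ$, so $\hat{\ZZ}$ is not identically zero. After that the paper fixes witnesses $\varphi_{2,0}\in S(\X)$ and $f_{2,0}\in\HH_K$, defines the $G$-equivariant maps $A:\mathcal{S}(\X)\to L^2(G)$, $A(\varphi)(g)=(\omega(g)\varphi,\varphi_{2,0})$, and $B:\HH\hookrightarrow L^2(G)$, $B(f)(g)=(\hat{\pi}(g)f,f_{2,0})$, and invokes the proof of \cite[Lemma~2.2]{li90} to deduce that $\operatorname{Im}B$ lies in the $L^2$-closure of $\operatorname{Im}A$; since $B(f_1)\ne0$, it cannot be orthogonal to all of $\operatorname{Im}A$, giving the desired $\varphi_1$. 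Your argument instead exploits the diagonal $G$-invariance of $\hat{\ZZ}$ together with topological irreducibility of $\hat{\pi}$ and the separate continuity of Lemma~\ref{l:cont} to run a density/cyclicity argument. Your approach is more elementary---it avoids the external input from \cite{li90} and works purely from Lemma~\ref{l:cont} and Schur---while the paper's approach yields the slightly sharper conclusion that one may take $\varphi_2$ and $f_2$ to be $K$-finite.
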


\begin{proof}
Fix $\varphi_{1,0}, \varphi_{2,0} \in S(\X)$ and $f_{1,0}, f_{2,0} \in \HH_K$ such that 
\[
 \hat{\ZZ}(\varphi_{1,0}, \varphi_{2,0}, f_{1,0}, f_{2,0}) \ne 0.
\]
Define a linear $G$-equivariant map $A: \mathcal{S}(\X) \rightarrow L^2(G)$ by
\[
 A(\varphi)(g) = (\omega(g) \varphi, \varphi_{2,0})
\]
and a continuous $G$-equivariant embedding $B: \HH \hookrightarrow L^2(G)$ by 
\[
 B(f)(g) = (\hat{\pi}(g) f, f_{2,0}),
\]
where $G$ acts on $L^2(G)$ by right translation.
Then it follows from the proof of \cite[Lemma 2.2]{li90} that $\operatorname{Im} B$ is contained in the closure of $\operatorname{Im} A$ in $L^2(G)$.
Hence we must have
\[
 \int_G A(\varphi_1)(g) \overline{B(f_1)(g)} \, dg \ne 0
\]
for some $\varphi_1 \in \mathcal{S}(\X)$.
This completes the proof.
\end{proof}

To prove Proposition \ref{p:key}, we proceed by induction on $\dim V - \dim W$ as in the proof of \cite[Proposition 5.4]{atobe}, where $V$ is fixed but $W$ and $\pi$ vary.
For this, we need a seesaw diagram
\[
\begin{tikzcd}
 \U(W') \arrow[rd,dash] & \U(V) \times \U(V) \arrow[ld,dash] \\
 \U(W) \times \U(W^\perp) \arrow[u,dash] & \U(V) \arrow[u,dash]
\end{tikzcd},
\]
where $V$ is an $m$-dimensional Hermitian space over $\C$, $W'$ is an $(n+1)$-dimensional skew-Hermitian space over $\C$, $W$ is an $n$-dimensional skew-Hermitian subspace of $W'$, and $W^\perp$ is the orthogonal complement of $W$ in $W'$.
Let $\pi$ and $\pi'$ be discrete series representations of $G = \U(W)$ and $G' = \U(W')$, respectively.
We denote by $\hat{\pi}$ and $\hat{\pi}'$ the unitary completions of $\pi$ and $\pi'$, respectively, and by $(\hat{\pi}'|_G)_{\disc}$ the discrete spectrum of the restriction of $\hat{\pi}'$ to $G$.

\begin{lem}
\label{l:seesaw}
Assume that $m > n$ and that $\hat{\pi}$ occurs in $(\hat{\pi}'|_G)_{\disc}$.
Then we have
\[
 \ZZ_{V,W',\chi_V,\chi_{W'},\psi}(\pi') \ne 0 \quad \Longrightarrow \quad \ZZ_{V,W,\chi_V,\chi_W,\psi}(\pi) \ne 0.
\]
\end{lem}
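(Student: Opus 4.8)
The plan is to transport the nonvanishing through the seesaw, working with the unitary completions so that the branching hypothesis $\hat\pi\hookrightarrow(\hat\pi'|_G)_{\disc}$ can be exploited, and passing back to $(\g,K)$-modules only at the very end.

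First I would reduce to the $L^2$-integrals of Lemma~\ref{l:cont}. By Lemma~\ref{l:gK-vs-L2}, the hypothesis $\ZZ_{V,W',\chi_V,\chi_{W'},\psi}(\pi')\neq0$ implies that $\hat\ZZ_{V,W',\chi_V,\chi_{W'},\psi}(\pi')$ does not vanish identically on $\mathcal S(\X')\times\mathcal S(\X')\times\HH'\times\HH'$, where $\HH'$ underlies $\hat\pi'$ and $\X'$ is a complete polarization of $V\otimes_\C W'$; and conversely, by the separate continuity in Lemma~\ref{l:cont} together with the density of $S(\X)$ in $\mathcal S(\X)$ and of $\HH_K$ in $\HH$, it suffices to show that $\hat\ZZ_{V,W,\chi_V,\chi_W,\psi}(\pi)$ does not vanish identically. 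So the statement becomes one purely about the Hilbert-space integrals.

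Next I would bring in the seesaw. Writing $V\otimes_\C W'=(V\otimes_\C W)\oplus(V\otimes_\C W^\perp)$ as an orthogonal sum of symplectic $\R$-spaces and choosing compatible polarizations $\X'=\X\oplus\X^\perp$, in the resulting mixed model $\mathcal S(\X')=\mathcal S(\X)\,\widehat\otimes\,\mathcal S(\X^\perp)$ the subgroup $\U(W)$ acts only on the first factor, through $\hat\omega_{V,W,\chi_V,\chi_W,\psi}$ (the splitting of $\U(W)\hookrightarrow\Mp$ depends only on $\dim V$, so no extra twist appears), and $\U(W^\perp)\cong\U(1)$ acts only on the second. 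By the multiplicity-one part of the Gan--Gross--Prasad conjecture for the pair $(\U(W'),\U(W))$, the $\hat\pi$-isotypic component of $\hat\pi'|_{\U(W)\times\U(W^\perp)}$ is $\hat\pi\boxtimes\xi$ for a single character $\xi$ of $\U(W^\perp)$; let $\iota\colon\hat\pi\boxtimes\xi\hookrightarrow\hat\pi'$ be the isometric embedding furnished by the hypothesis. Since $\ZZ_{V,W'}(\pi')\neq0$ gives $\theta_{V,W'}(\pi')\neq0$, the seesaw applied to the branching $\pi'\supset\pi\boxtimes\xi$ forces $\theta_{V,W^\perp,\chi_V,\chi_{W'},\psi}(\xi)\neq0$, so the $\xi$-eigenspace of $\U(W^\perp)$ in $\mathcal S(\X^\perp)$ is nonzero. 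I would then take $f_i'=\iota(f_i\otimes e)$ with $f_i\in\HH$ and $e$ a unit $\xi$-vector, and $\varphi_i^\perp=\varphi^\perp\in\mathcal S(\X^\perp)$ a fixed unit vector in that eigenspace; on $\U(W)\times\U(W^\perp)$ the matrix coefficient of $\hat\pi'$ then factors as $(\hat\pi(g)f_1,f_2)\,\xi(h)$, and that of the Weil representation factors as $(\hat\omega_{V,W}(g)\varphi_1,\varphi_2)\,\overline{\xi(h)}$. The heart of the argument is then a seesaw identity for the doubling zeta integrals: unfolding the $\U(W')$-integral and using these factorizations, $\hat\ZZ_{V,W',\chi_V,\chi_{W'},\psi}(\pi')(\varphi_1\otimes\varphi^\perp,\varphi_2\otimes\varphi^\perp,f_1',f_2')$ should reduce to a nonzero multiple of $\hat\ZZ_{V,W,\chi_V,\chi_W,\psi}(\pi)(\varphi_1,\varphi_2,f_1,f_2)$, the extra factor being the elementary integral $\int_{\U(W^\perp)}(\hat\omega_{V,W^\perp}(h)\varphi^\perp,\varphi^\perp)\overline{\xi(h)}\,dh=\|\varphi^\perp\|^2>0$ attached to the rank-one pair $(\U(V),\U(W^\perp))$, for which $\U(W^\perp)$ is compact and everything is transparent. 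Choosing $\varphi_i,f_i$ so that the left side is nonzero then yields $\hat\ZZ_{V,W}(\pi)\neq0$, as desired.

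The hard part will be making this last reduction precise, i.e.\ comparing the doubling integral of $\pi'$ over $\U(W')$ with that of $\pi$ over $\U(W)$. This is not formal: the two integrals sit at the different points $s'=\tfrac{m-n-1}{2}$ and $s=\tfrac{m-n}{2}$ of the degenerate principal series, so one must analyse the restriction of the section $\FF(T(\varphi_1\otimes\varphi^\perp\otimes\overline{\varphi_2\otimes\varphi^\perp}))\in I_{W'}(s',\chi_V)$ along $\U((W')^\square)\supset\U(W^\square)\times\U((W^\perp)^\square)$, where $(W')^\square=W^\square\oplus(W^\perp)^\square$, and check that it is carried to a section of the expected shape $\FF(T(\varphi_1\otimes\overline{\varphi_2}))\in I_W(s,\chi_V)$ tensored with a Siegel--Weil section on $\U((W^\perp)^\square)$; I would carry this out in the mixed model by tracking the Weil cocycle, in the spirit of the proof of Lemma~\ref{l:conv} and of \cite{gqt}. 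A subsidiary but necessary point is to exclude cancellation when passing from ``nonzero for some vectors'' to ``nonzero identically'': this follows from the standard positivity $\ZZ(\varphi,\varphi,f,f)=\int_G\langle(\omega\otimes\bar\pi)(g)(\varphi\otimes\bar f),\varphi\otimes\bar f\rangle\,dg\ge0$, which makes $\hat\ZZ$ a positive semidefinite Hermitian form and lets one take $\varphi_2=\varphi_1$, $f_2=f_1$ throughout. The remaining bookkeeping --- matching splitting characters and Whittaker data across the seesaw --- is routine given the conventions fixed in \S\ref{ss:whit} and \S\ref{ss:main-theorem-setup}.
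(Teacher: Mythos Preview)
Your proposed ``seesaw identity'' has a genuine gap. You want
\[
\hat\ZZ_{V,W'}(\pi')(\varphi_1\otimes\varphi^\perp,\varphi_2\otimes\varphi^\perp,f_1',f_2')
= C\cdot \hat\ZZ_{V,W}(\pi)(\varphi_1,\varphi_2,f_1,f_2),
\]
but the left side is an integral over all of $\U(W')$, while your factorizations of the two matrix coefficients are only valid on the proper subgroup $\U(W)\times\U(W^\perp)$. For a general $g'\in\U(W')$, neither $(\omega'(g')\varphi_1',\varphi_2')$ nor $(\hat\pi'(g')f_1',f_2')$ splits as a product, so there is no ``unfolding'' to perform. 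Your proposed remedy via restricting the doubling section from $\U((W')^\square)$ to $\U(W^\square)\times\U((W^\perp)^\square)$ does not help: the doubling zeta integral for $\pi'$ is still an integral of $\FF'(i(g',1))$ over $g'\in\U(W')$, and for generic $g'$ the point $i(g',1)$ does not lie in that subgroup. In short, you are trying to replace an integral over $\U(W')$ by one over $\U(W)\times\U(W^\perp)$ without any mechanism to do so. The invocation of GGP multiplicity one, the character $\xi$, and the nonvanishing of $\theta_{V,W^\perp}(\xi)$ are all red herrings introduced to make this identity look plausible.

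The paper's argument is much more elementary and bypasses all of this. It uses Lemma~\ref{l:gK-vs-L2} not merely to pass to Hilbert spaces, but in its full strength: one may prescribe the first vector $f_1'$ to lie in the closed $G$-invariant subspace $\HH\subset\HH'$ and still find $\varphi_1',\varphi_2',f_2'$ making the $G'$-integral nonzero. Then Fubini over $G'/G$,
\[
\int_{G'} = \int_{G'/G}\int_G,
\]
produces some $g'_0$ for which the inner $G$-integral is nonzero; absorbing $g_0'^{-1}$ into $\varphi_2'$ and $f_2'$ gives new vectors $\varphi_3',f_3'$ with $\int_G(\omega'(g)\varphi_1',\varphi_3')\overline{(\hat\pi'(g)f_1',f_3')}\,dg\neq0$. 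Since $f_1'\in\HH$ and $\HH$ is $G$-stable, one may replace $f_3'$ by its orthogonal projection to $\HH$, after which the $\hat\pi'$-matrix coefficient on $G$ is a matrix coefficient of $\hat\pi$. Finally, on $G$ the Weil representation $\omega'$ acts trivially on the $S(\X^\perp)$ factor, so after passing by continuity to $\varphi_i'\in S(\X')$ and writing them as sums of pure tensors, the $S(\X^\perp)$ part pulls out as a harmless constant $(\varphi_1^\perp,\varphi_3^\perp)$. No doubling sections, no $\U(W^\perp)$-characters, no comparison at different values of $s$ are needed.
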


\begin{proof}
Consider the symplectic spaces
\[
 \W' = V \otimes_E W', \quad 
 \W = V \otimes_E W, \quad
 \W^\perp = V \otimes_E W^\perp
\]
over $F$, so that $\W' = \W \oplus \W^\perp$.
We may take complete polarizations
\[
 \W' = \X' \oplus \Y', \quad
 \W = \X \oplus \Y, \quad
 \W^\perp = \X^\perp \oplus \Y^\perp
\]
such that $\X' = \X \oplus \X^\perp$ and $\Y' = \Y \oplus \Y^\perp$.
Write 
\[
 \omega' = \omega_{V,W',\chi_V,\chi_{W'},\psi}, \quad
 \omega = \omega_{V,W,\chi_V,\chi_W,\psi}, \quad
 \omega^\perp = \omega_{V,W^\perp,\chi_V,\chi_{W'}\chi_W^{-1},\psi}.
\]
Then we have an identification
\[
 (\omega', S(\X')) = (\omega \boxtimes \omega^\perp, S(\X) \otimes S(\X^\perp))
\]
as representations of $\U(W) \times \U(W^\perp) \times \U(V)$.

Assume that $\ZZ_{V,W',\chi_V,\chi_{W'},\psi}(\pi') \ne 0$.
Let $\HH$ and $\HH'$ be the underlying Hilbert spaces of $\hat{\pi}$ and $\hat{\pi}'$, respectively.
By assumption, we may realize $\HH$ as a closed $G$-invariant subspace of $\HH'$.
Let $f_1' \in \HH$ be a nonzero element.
By Lemma \ref{l:gK-vs-L2}, there exist $\varphi_1', \varphi_2' \in \mathcal{S}(\X')$ and $f_2' \in \HH'$ such that
\[
 \int_{G'} (\omega'(g') \varphi_1', \varphi_2') \overline{(\hat{\pi}'(g') f_1', f_2')} \, dg' \ne 0.
\]
This integral is absolutely convergent and is equal to
\begin{align*}
 & \int_{G'/G} \int_G (\omega'(g'g) \varphi_1', \varphi_2') \overline{(\hat{\pi}'(g'g) f_1', f_2')} \, dg \, dg' \\
 & = \int_{G'/G} \int_G (\omega'(g) \varphi_1', \omega'(g'^{-1}) \varphi_2') \overline{(\hat{\pi}'(g) f_1', \hat{\pi}'(g'^{-1}) f_2')} \, dg \, dg'.
\end{align*}
Hence we have
\[
 \int_G (\omega'(g) \varphi_1', \varphi_3') \overline{(\hat{\pi}'(g) f_1', f_3')} \, dg \ne 0
\]
for some $\varphi_3' \in \mathcal{S}(\X')$ and $f_3' \in \HH'$.
We may assume that $f_3' \in \HH$ by replacing $f_3'$ by its orthogonal projection to $\HH$ if necessary.
Note that the function $g \mapsto (\hat{\pi}'(g) f_1', f_3')$ on $G$ is a matrix coefficient of $\hat{\pi}$.
On the other hand, for any $\varphi' \in \mathcal{S}(\X')$, the function $g \mapsto (\omega'(g) \varphi', \varphi'_3)$ on $G$ is square-integrable (see the proof of \cite[Theorem 3.2]{li89}).
From this and the argument in the proof of Lemma \ref{l:cont}, we can deduce that the map
\[
 \varphi' \longmapsto \int_G (\omega'(g) \varphi', \varphi'_3) \overline{(\hat{\pi}'(g) f_1', f_3')} \, dg 
\]
on $\mathcal{S}(\X')$ is continuous.
Hence we may assume that $\varphi_1' \in S(\X')$.
Similarly, we may assume that $\varphi_3' \in S(\X')$.
Then we may assume further that $\varphi_i' = \varphi_i \otimes \varphi_i^\perp$ with $\varphi_i \in S(\X)$ and $\varphi_i^\perp \in S(\X^\perp)$.
Since 
\[
 (\omega'(g) \varphi_1', \varphi_3') = (\omega(g) \varphi_1, \varphi_3) \cdot (\varphi_1^\perp, \varphi_3^\perp)
\]
for $g \in G$, we have
\[
 \int_G (\omega(g) \varphi_1, \varphi_3) \overline{(\hat{\pi}'(g) f_1', f_3')} \, dg \ne 0.
\]
Finally, by Lemma \ref{l:cont}, we may assume that $f_1'$ and $f_3'$ are $K$-finite vectors in $\HH$.
This shows that $\ZZ_{V,W,\chi_V,\chi_W,\psi}(\pi) \ne 0$ and completes the proof.
\end{proof}

Fix an $m$-dimensional Hermitian space $V$ over $\C$ and a character $\chi_V$ of $\C^\times$ such that $\chi_V|_{\R^\times} = \omega_{\C/\R}^m$.
Let $(r,s)$ be the signature of $V$.
Since $\omega_{V, W, \chi_V, \chi_W, \psi^{-1}} = \omega_{-V, W, \chi_V, \chi_W, \psi}$, we may assume that $\psi$ is as given in \S \ref{ss:main-theorem-setup} by replacing $V$ by $-V$ if necessary.
For any $n$-dimensional skew-Hermitian space $W$ over $\C$ and any discrete series representation $\pi$ of $\U(W)$ with Harish-Chandra parameter $\lambda$, we define integers $k_\lambda, r_\lambda, s_\lambda$ as in \S \ref{ss:nonvanishing} with respect to $k_0$ and $\chi_V$, where $k_0 = -1$ or $0$ is determined by
\[
 m \equiv n + k_0 \bmod 2. 
\]
Note that $k_\lambda \equiv k_0 \bmod 2$ and
\[
 r_\lambda + s_\lambda + k_\lambda = 
 \begin{cases}
  n-1 & \text{if $k_\lambda = -1$;} \\
  n & \text{if $k_\lambda \ge 0$.}
 \end{cases}
\]
As in Remark \ref{r:nonvanishing}, we may assume that $r-r_\lambda \ge s-s_\lambda$ by replacing $V$ by $-V$ and $\pi$ by $\bar{\pi} \otimes (\chi_V \circ \det)$ if necessary.
Then we have
\begin{equation}
\label{eq:(r,s)}
 (r,s) = 
 \begin{cases}
  (r_\lambda + l + 2t + 1, s_\lambda + l) & \text{if $k_\lambda = -1$;} \\
  (r_\lambda + l + 2t, s_\lambda + l) & \text{if $k_\lambda \ge 0$}
 \end{cases}
\end{equation}
for some integers $l,t$ with $t \ge 0$.
If further $\theta_{V,W,\chi_V,\chi_W,\psi}(\pi) \ne 0$, then it follows from Theorem \ref{t:nonvanishing} that 
\[
\begin{cases}
 l \ge 0, t \ge 0 & \text{if $k_\lambda = -1$;} \\
 l \ge 0, t \ge 0 & \text{if $k_\lambda = 0$;} \\
 \text{$l \ge 0, t = 0$ or $l \ge k_\lambda, t \ge 1$} & \text{if $k_\lambda \ge 1$.}
\end{cases}
\]

\begin{lem}
\label{l:induction}
Let $W$ be an $n$-dimensional skew-Hermitian space over $\C$ and $\pi$ a discrete series representation of $\U(W)$ with Harish-Chandra parameter $\lambda$.
Assume that $k_\lambda, r_\lambda, s_\lambda$ satisfy \eqref{eq:(r,s)} for some integers $l,t$ with
\[
\begin{cases}
 l \ge 0, t \ge 0 & \text{if $k_\lambda = -1$;} \\
 l \ge 0, t \ge 1 & \text{if $k_\lambda = 0$;} \\
 l \ge k_\lambda, t \ge 0 & \text{if $k_\lambda \ge 1$}
\end{cases}
\]
(and hence $m>n$).
Assume further that $\theta_{V,W,\chi_V,\chi_W,\psi}(\pi) \ne 0$.
Then there exist an $(n+1)$-dimensional skew-Hermitian space $W'$ over $\C$ containing $W$ and a discrete series representation $\pi'$ of $\U(W')$ with Harish-Chandra parameter $\lambda'$ such that
\begin{itemize}
\item $\theta_{V,W',\chi_V,\chi_{W'},\psi}(\pi') \ne 0$;
\item $\hat{\pi}$ occurs in $(\hat{\pi}'|_{\U(W)})_{\disc}$;
\item $k_{\lambda'}, r_{\lambda'}, s_{\lambda'}$ satisfy the following conditions:
\begin{itemize}
\item 
if $k_\lambda = -1$, then $k_{\lambda'} = 0$ and
\[
 (r,s) = (r_{\lambda'} + l + 2t, s_{\lambda'} + l);
\]
\item 
if $k_\lambda = 0$, then $k_{\lambda'} = -1$ and
\[
 \text{$(r,s) = (r_{\lambda'} + l + 2(t-1) + 1, s_{\lambda'} + l)$ or $(r_{\lambda'} + (l-1) + 2t + 1, s_{\lambda'} + (l-1))$,}
\]
where the second case happens only if $l \ge 1$;
\item
if $k_\lambda \ge 1$, then $k_{\lambda'} = k_\lambda-1$ and 
\[
 (r,s) = (r_{\lambda'} + (l-1) + 2t, s_{\lambda'} + (l-1)).
\]
\end{itemize}
\end{itemize}
\end{lem}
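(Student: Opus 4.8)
\emph{Strategy.} The plan is to imitate the proof of \cite[Proposition 5.4]{atobe}. For each of the three ranges $k_\lambda = -1$, $k_\lambda = 0$, $k_\lambda \ge 1$ we will write down, by hand, an $(n+1)$-dimensional skew-Hermitian space $W'$ containing $W$ (of signature $\mathrm{sig}(W)+(1,0)$ or $\mathrm{sig}(W)+(0,1)$) and a discrete series representation $\pi'$ of $\U(W')$ whose Harish-Chandra parameter $\lambda'$ is obtained from $\lambda$ by inserting a single, suitably placed new entry, and then verify the three required properties in turn. The occurrence of $\hat\pi$ in $(\hat\pi'|_{\U(W)})_{\disc}$ will be read off from the branching law for discrete series of real unitary groups in the codimension-one (Bessel) case — He's partial proof of the local Gross--Prasad conjecture \cite{he}, or equivalently the interlacing criterion relating the Harish-Chandra parameters of $\pi$ and $\pi'$ — so the task reduces to choosing $\lambda'$ among the parameters that interlace $\lambda$ in the required manner. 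The values $k_{\lambda'}, r_{\lambda'}, s_{\lambda'}$ are then computed from $\lambda'$ by the recipe of \S\ref{ss:nonvanishing}, and the nonvanishing of $\theta_{V,W',\chi_V,\chi_{W'},\psi}(\pi')$ is checked by substituting $\lambda'$ into Theorem \ref{t:nonvanishing} (which applies since $\dim V = m \ge n+1 = \dim W'$), the input being the hypothesis $\theta_{V,W,\chi_V,\chi_W,\psi}(\pi) \ne 0$, equivalently the validity of Atobe's criterion for $\lambda$.

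\emph{Constructing $\lambda'$ in each range.} When $k_\lambda \le 0$, the new entry is inserted far from the origin, into whichever block makes $\hat\pi$ occur in the restriction of $\hat\pi'$ while keeping $k_{\lambda'}$ equal to its default value ($0$ if $k_\lambda = -1$, $-1$ if $k_\lambda = 0$); since we have arranged $r - r_\lambda \ge s - s_\lambda$, this yields $r_{\lambda'} = r_\lambda + 1$, $s_{\lambda'} = s_\lambda$ when $k_\lambda = -1$, and for $k_\lambda = 0$ it yields one of the two configurations in the statement according to whether one decrements $r - r_\lambda$ or $s - s_\lambda$, the latter being possible only when $l \ge 1$. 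When $k_\lambda \ge 1$, the block of $\lambda_0$ containing the central string $\{\frac{k_\lambda-1}{2},\dots,-\frac{k_\lambda-1}{2}\}$ is made to sit inside the corresponding block of $\lambda'_0$ via interlacing, which shrinks that central string to length $k_\lambda - 1$ — so $k_{\lambda'} = k_\lambda - 1$ — and forces $r_{\lambda'} = r_\lambda + 1$, $s_{\lambda'} = s_\lambda + 1$; the constraint $l \ge k_\lambda$ from the hypothesis is what guarantees that the inserted entry can be placed so as neither to collide with the outermost entries $\alpha_x, \gamma_z, \beta_1, \delta_1$ of $\lambda$ nor to lengthen the central string of $\lambda'_0$, and that the resulting configuration is admissible for Atobe's criterion. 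In every case one checks directly that $(r,s)$ takes exactly the form claimed in the statement, with $l,t$ replaced by the shifted values $l',t'$.

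\emph{Nonvanishing of $\theta(\pi')$, and the main obstacle.} With $\lambda'$ in hand and $(r,s)$ of the stated form, Theorem \ref{t:nonvanishing} reduces the nonvanishing of $\theta_{V,W',\chi_V,\chi_{W'},\psi}(\pi')$ to the two inequalities $\#\CC^+_{\lambda'}(l'+t') \le l'$ and $\#\CC^-_{\lambda'}(l'+t') \le l'$, together with $l' \ge 0$ (resp. $l' \ge k_{\lambda'}$ when $k_{\lambda'} \ge 1$, which arises only in the $k_\lambda \ge 1$ range, where $l' = l-1 \ge k_\lambda - 1 = k_{\lambda'}$), all of which follow from the hypotheses. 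The two $\CC^\pm$-inequalities we deduce from the corresponding ones for $\lambda$ — valid since $\theta_{V,W,\chi_V,\chi_W,\psi}(\pi) \ne 0$ — by tracking how the sequence $\XX_\lambda \supset \cdots \supset \XX^{(\infty)}_\lambda$ of \S\ref{ss:nonvanishing} changes under the extremal insertion defining $\lambda'$: it either stays essentially the same or loses a single conjugate pair, and the induced change in $\#\CC^\pm$ is absorbed by the passage from $(l,t)$ to $(l',t')$. The hardest part is exactly this simultaneous bookkeeping: the insertion point of the new entry must be chosen so that the interlacing (hence the branching) holds, so that the prescribed $k_{\lambda'}, r_{\lambda'}, s_{\lambda'}$ come out, and so that the $\CC^\pm$-bounds are preserved — all at once. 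The $k_\lambda = 0$ range with its bifurcation into two sub-cases, and the behaviour of the central string near $\pm\frac{k_\lambda}{2}$ when $k_\lambda \ge 1$, are where this is most delicate.
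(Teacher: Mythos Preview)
Your strategy is exactly the paper's: it too defers to Atobe's Lemmas 5.1, 5.2, 5.3 (for $k_\lambda = -1$, $0$, $\ge 1$ respectively), invoking He's branching result for the occurrence of $\hat\pi$ in $(\hat\pi'|_{\U(W)})_{\disc}$ and then checking Theorem \ref{t:nonvanishing} for $\lambda'$ via the $\CC^\pm$-bookkeeping. One clarification on the $k_\lambda = 0$ case: the bifurcation between $(r_{\lambda'},s_{\lambda'}) = (r_\lambda+1,s_\lambda)$ and $(r_\lambda,s_\lambda+1)$ is not a choice you make but is \emph{forced} by Atobe's construction---one always takes $W'$ of signature $(p+1,q)$, and then $(r_{\lambda'},s_{\lambda'})$ depends on whether $(\tfrac{1}{2},+1)$ or $(-\tfrac{1}{2},-1)$ lies in $\XX_\lambda$; the point is that when the second outcome occurs, that element automatically survives to $\XX_\lambda^{(\infty)}$, so $\#\CC_\lambda^\epsilon(t) \ge 1$ for some $\epsilon$, and since $t \ge 1$ the hypothesis $\#\CC_\lambda^\epsilon(l+t) \le l$ then forces $l \ge 1$.
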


\begin{proof}
The assertion was essentially proved by Atobe (see Lemmas 5.1, 5.2, 5.3 of \cite{atobe} in the cases $k_\lambda = -1, k_\lambda = 0, k_\lambda \ge 1$, respectively). 
We only give some details in the case $k_\lambda = 0$.
Since $\theta_{V,W,\chi_V,\chi_W,\psi}(\pi) \ne 0$, we have $\# \CC_\lambda^\pm(l+t) \le l$ by Theorem \ref{t:nonvanishing}.
Let $W'$ be the skew-Hermitian space over $\C$ of signature $(p+1,q)$, where $(p,q)$ is the signature of $W$.
Then by \cite[Lemma 5.2]{atobe}, there exists a discrete series representation $\pi'$ of $\U(W')$ with Harish-Chandra parameter $\lambda'$ (relative to the choice of integers $l+t \ll \beta_0 < \beta_1 < \cdots$) satisfying the following conditions:
\begin{itemize}
\item $\Hom_{\U(W)}(\pi', \pi) \ne 0$;
\item $k_{\lambda'} = -1$;
\item $(r_{\lambda'}, s_{\lambda'}) =
\begin{cases}
 (r_\lambda + 1, s_\lambda) & \text{if $(\frac{1}{2}, +1)$ and $(-\frac{1}{2}, -1)$ do not belong to $\XX_\lambda$;} \\
 (r_\lambda, s_\lambda + 1) & \text{if $(\frac{1}{2}, +1)$ or $(-\frac{1}{2}, -1)$ belongs to $\XX_\lambda$;}
\end{cases}
$
\item $\# \CC^\pm_{\lambda'}(l+t-1) \le 
\begin{cases}
 l & \text{if $(\frac{1}{2}, +1)$ and $(-\frac{1}{2}, -1)$ do not belong to $\XX_\lambda$;} \\
 l-1 & \text{if $(\frac{1}{2}, +1)$ or $(-\frac{1}{2}, -1)$ belongs to $\XX_\lambda$.}
\end{cases}
$
\end{itemize}
We remark that Atobe defined $\pi'$ so that the pair $(\pi', \pi)$ satisfies the conditions on $\epsilon$-factors in the Gan--Gross--Prasad conjecture \cite{ggp1} and deduced from a result of He \cite{he} that $\hat{\pi}$ occurs in $(\hat{\pi}'|_{\U(W)})_{\disc}$ and hence $\Hom_{\U(W)}(\pi', \pi) \ne 0$.
Also, if $(\frac{\epsilon}{2}, \epsilon) \in \XX_\lambda$ for some $\epsilon = \pm 1$, then $(\frac{\epsilon}{2}, \epsilon) \in \XX_\lambda^{(\infty)}$.
Since $t \ge 1$, this implies that $\# \CC_\lambda^\epsilon(t) \ge 1$ and hence $l \ge 1$.
By Theorem \ref{t:nonvanishing} again, the conditions above imply that $\theta_{V,W',\chi_V,\chi_{W'},\psi}(\pi') \ne 0$.
This completes the proof.
\end{proof}

We now prove Proposition \ref{p:key}.
Let $W$ be an $n$-dimensional skew-Hermitian space over $\C$ and $\pi$ a discrete series representation of $\U(W)$ with Harish-Chandra parameter $\lambda$.
Assume that $m \ge n$ and that $k_\lambda, r_\lambda, s_\lambda$ satisfy \eqref{eq:(r,s)} for some integers $l,t$ with $t \ge 0$.
Assume further that $\theta_{V,W,\chi_V,\chi_W,\psi}(\pi) \ne 0$.
Then we need to show that $\ZZ_{V,W,\chi_V,\chi_W,\psi}(\pi) \ne 0$.

We first consider the case $k_\lambda = -1$ or $0$ (and hence $l \ge 0, t \ge 0$).
By Lemmas \ref{l:seesaw} and \ref{l:induction}, an induction on $l+t$ reduces us to the case $k_\lambda = 0, l \ge 0, t = 0$.
In this case, the assertion will be proved in Lemma \ref{l:base} below.

We next consider the case $k_\lambda \ge 1$.
If $t \ge 1$ (and hence $l \ge k_\lambda$), then by Lemmas \ref{l:seesaw} and \ref{l:induction}, an induction on $k_\lambda$ reduces us to the case $k_\lambda = 0, l \ge 0, t \ge 1$.
But this case has already been treated above.
If $t = 0$ (and hence $l \ge \frac{k_\lambda}{2}$), then the assertion will be proved in Lemma \ref{l:base} below.

\subsection{Base step}

We continue with the setup of the previous subsection.
To finish the proof of Proposition \ref{p:key}, it remains to prove the following.

\begin{lem}
\label{l:base}
Let $W$ be an $n$-dimensional skew-Hermitian space over $\C$ and $\pi$ a discrete series representation of $\U(W)$ with Harish-Chandra parameter $\lambda$.
Assume that $k_\lambda \ge 0$ and 
\[
 (r,s) = (r_\lambda + l, s_\lambda + l)
\]
for some integer $l \ge \frac{k_\lambda}{2}$ (and hence $m \ge n$).
Then $\ZZ_{V,W,\chi_V,\chi_W,\psi}(\pi) \ne 0$.
\end{lem}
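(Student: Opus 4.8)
The plan is to recast the statement as the nonvanishing of an $L^2$-theta lift and then to establish that by reducing, via the seesaw machinery already set up, to the (almost) equal-rank case, where the theta lift is classically understood.

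\textbf{Step 1: reduction to a spectral statement.} First I would exploit positivity. For $\varphi \in \mathcal{S}(\X)$ and $f \in \HH$ the function $g \mapsto (\omega(g)\varphi,\varphi)\,\overline{(\hat\pi(g)f,f)}$ on $G = \U(W)$ is positive-definite, being a pointwise product of a matrix coefficient of the unitary Weil representation $\omega$ and the conjugate of one of $\hat\pi$; by Lemma \ref{l:cont} it is integrable, so $\hat{\ZZ}(\varphi,\varphi,f,f) \ge 0$. Since $\hat\pi$ is a discrete series representation, it is an isolated point of the unitary dual of $G$, so $\hat{\ZZ}(\varphi,\varphi,f,f)$ is strictly positive for a suitable choice of $\varphi,f$ exactly when $\hat\pi$ occurs as a discrete summand of $\omega|_G$. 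Combined with the separate continuity of $\hat{\ZZ}$ and the density of $S(\X) \subset \mathcal{S}(\X)$ and $\HH_K \subset \HH$, this shows that $\ZZ_{V,W,\chi_V,\chi_W,\psi}(\pi) \ne 0$ if and only if $\hat\pi$ occurs discretely in $\omega|_{\U(W)}$, that is, if and only if the $L^2$-theta lift of $\pi$ to $\U(V)$ is nonzero. So it suffices to prove this last statement when $(r,s) = (r_\lambda+l,s_\lambda+l)$ with $l \ge \tfrac{k_\lambda}{2}$.

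\textbf{Step 2: reduction to the equal-rank case and base cases.} Next I would induct on $m-n$. Whenever its hypotheses hold, Lemma \ref{l:induction} produces an $(n+1)$-dimensional $W' \supset W$ and a discrete series $\pi'$ of $\U(W')$ with $\theta_{V,W',\chi_V,\chi_{W'},\psi}(\pi') \ne 0$, with $\hat\pi$ occurring in $(\hat\pi'|_{\U(W)})_{\disc}$, and with $(\pi',V)$ again at first occurrence; together with Lemma \ref{l:seesaw} this reduces the nonvanishing of $\ZZ$ for $(V,W,\pi)$ to that for $(V,W',\pi')$, which has $\dim V-\dim W$ smaller by one. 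Iterating, one arrives at the equal-rank case $m=n$ — where $\theta_{r,s}(\pi)$ is a discrete series representation of $\U(r,s)$ by Li \cite{li90} and Paul \cite{paul1,paul2}, hence square-integrable and realized inside $L^2(\U(r,s))$, so that $\hat\pi$ indeed occurs discretely in $\omega|_{\U(W)}$ — together with the finitely many cases in which Lemma \ref{l:induction} fails to apply (roughly $k_\lambda\ge 2$ and $\lceil k_\lambda/2\rceil \le l < k_\lambda$). For those remaining cases, or uniformly in $l$, I would instead test $\ZZ$ directly on $\varphi_1=\varphi_2=\varphi_0$ the Gaussian in the Fock model and $f_1=f_2=f_0$ a highest weight vector of the lowest $K$-type of $\pi$: the integral converges by Lemma \ref{l:conv} and is $\ge 0$ by Step 1, and Li's explicit evaluation \cite{li90} of such an integral continues, in the first-occurrence regime, to give a product of ratios of Gamma factors whose arguments avoid the poles of $\Gamma$, so the value is strictly positive.

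\textbf{The main obstacle.} The hard part will be the non-tempered cases $m > n$: there $\theta_{r,s}(\pi) = A_\q(\lambda')$ lies only in the weakly fair range, so the naive implication ``$\theta_{r,s}(\pi)$ unitary $\Rightarrow$ $\theta_{r,s}(\pi)$ is $L^2$-realized'' fails and one cannot a priori conclude that $\hat\pi$ is a discrete summand of $\omega|_{\U(W)}$. What rescues both the induction and the matrix-coefficient computation is precisely the hypothesis $t=0$: through Theorem \ref{t:nonvanishing} it keeps every intermediate theta lift nonzero and forces the matrix coefficients entering $\ZZ$ to decay fast enough to stay square-integrable along the chain $\U(W) \subset \U(W')$. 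Verifying these growth estimates — in effect re-running, in the weakly fair range, the part of Li's analysis that yields the nonvanishing (as opposed to the identification) of the integral of matrix coefficients — is the technical core of this section.
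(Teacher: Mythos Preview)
Your Step 2 contains a genuine gap. Lemma \ref{l:induction} requires $t \ge 1$ when $k_\lambda = 0$, but under the hypotheses of Lemma \ref{l:base} one has $t = 0$ identically (since $(r,s) = (r_\lambda+l,\,s_\lambda+l)$). Thus for $k_\lambda = 0$ the induction never starts; and when $k_\lambda \ge 1$ with $l \ge k_\lambda$, repeated application of Lemma \ref{l:induction} drives $k_{\lambda'}$ down to $0$ while keeping $t' = 0$, so you are stuck again. The cases $k_\lambda = 0$, $t = 0$, $l \ge 1$ (and $k_\lambda \ge 1$, $k_\lambda/2 \le l < k_\lambda$) are therefore not residual exceptions but the entire content of the lemma beyond $m=n$. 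Lemma \ref{l:base} is \emph{precisely} the base case for the induction built from Lemmas \ref{l:seesaw} and \ref{l:induction}; you cannot prove the base by invoking the inductive step. Your remark that ``the hypothesis $t=0$\ldots keeps every intermediate theta lift nonzero'' has it backwards: $t=0$ is exactly what blocks Lemma \ref{l:induction} at $k_\lambda=0$.

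Your fallback---testing $\ZZ$ on the Gaussian and a lowest-$K$-type vector and citing Li \cite{li90}---is the right instinct but is not a proof as written, and the ``growth estimate'' you identify as the core difficulty is a red herring. The paper instead uses a \emph{different} seesaw, splitting $V = V_1 \oplus V_2$ with $V_1$ of signature $(r,0)$ and $V_2$ of signature $(0,s)$, so that $\U(V_1)\times\U(V_2)=K'$ is compact. The argument then runs: (a) the lowest $K$-type $\mu$ of $\pi$ is of minimal $(r,s)$-degree (Lemma \ref{l:lkt->mindeg}, deduced from Paul's almost-equal-rank results), hence lies in the joint harmonics; (b) from the explicit harmonic correspondence (Lemma \ref{l:K-type-corresp}) one reads off $K'$-types $\mu_1',\mu_2'$ whose theta lifts back to $\U(W)$ are unitary highest weight modules $\pi_1,\pi_2$ with lowest $K$-types $\mu_1,\mu_2$ satisfying $\mu\subset\mu_1\otimes\mu_2$; (c) on the $\mu_i'$-isotypic subspaces of $S(\X_i)$ the Weil matrix coefficients are matrix coefficients of $\pi_i$, so $\ZZ$ specializes to the triple integral $\int_{\U(W)}\Psi_1\Psi_2\overline{\Psi}\,dg$ of Flensted--Jensen functions attached to $(\pi_1,\mu_1),(\pi_2,\mu_2),(\pi,\mu)$; (d) this integral is shown to be nonzero by the computation in the proof of \cite[Theorem 4.1]{li90}. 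No square-integrability of $\theta_{r,s}(\pi)$ enters.
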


Note that by Theorem \ref{t:nonvanishing}, the assumption automatically implies that $\theta_{V,W,\chi_V,\chi_W,\psi}(\pi) \ne 0$.

To prove this lemma, we need the notion of $K$-types of minimal degrees introduced by Howe \cite{howe2}.
Let $(p,q)$ be the signature of $W$.
We take the maximal compact subgroup $K \cong \U(p) \times \U(q)$ of $\U(W) = \U(p,q)$ as in \S \ref{ss:ds} and parametrize the irreducible representations of $K$ by highest weights
\[
 (a_1, \dots, a_p; b_1, \dots, b_q),
\]
where 
\begin{itemize}
\item $a_i, b_j \in \Z$;
\item $a_1 \ge \dots \ge a_p$ and $b_1 \ge \dots \ge b_q$.
\end{itemize}
Similarly, we take the maximal compact subgroup $K' \cong \U(r) \times \U(s)$ of $\U(V) = \U(r,s)$ and parametrize the irreducible representations of $K'$.

Let $\mathcal{P} = \bigoplus_{d=0}^\infty \mathcal{P}_d$ be the Fock model of the Weil representation $\omega_{V,W,\chi_V,\chi_W,\psi}$ of $\U(W) \times \U(V)$ relative to the data $(\chi_V,\chi_W,\psi)$ given in \S \ref{ss:main-theorem-setup}, where $\mathcal{P}$ is the space of polynomials in $mn$ variables and $\mathcal{P}_d$ is the subspace of homogeneous polynomials of degree $d$.
Note that $\mathcal{P}_d$ is invariant under the action of $K \times K'$.
For any irreducible representation $\mu$ of $K$ occurring in $\mathcal{P}$, we define the $(r,s)$-degree of $\mu$ as the smallest nonnegative integer $d$ such that the $\mu$-isotypic component of $\mathcal{P}_d$ is nonzero, which depends only on $r-s$ (see \cite[Lemma 1.4.5]{paul1}).

Let $\mathcal{H}$ be the space of joint harmonics, which is a $K \times K'$-invariant subspace of $\mathcal{P}$.
For any irreducible representations $\mu$ and $\mu'$ of $K$ and $K'$, respectively, we say that $\mu$ and $\mu'$ correspond if $\mu \boxtimes \mu'$ occurs in $\mathcal{H}$, in which case $\mu$ and $\mu'$ determine each other.
This correspondence can be described as follows.

\begin{lem}
\label{l:K-type-corresp}
Let $\mu$ and $\mu'$ be irreducible representations of $K$ and $K'$, respectively.
Then $\mu$ and $\mu'$ correspond if and only if $\mu$ and $\mu'$ are of the form
\begin{align*}
 \mu & = (a_1, \dots, a_x, 0, \dots, 0, b_1, \dots, b_y; c_1, \dots, c_z, 0, \dots, 0, d_1, \dots, d_w) \\
 & + \bigg( \frac{r-s}{2}, \dots, \frac{r-s}{2}; \frac{s-r}{2}, \dots, \frac{s-r}{2} \bigg) + \bigg( \frac{m_0}{2}, \dots, \frac{m_0}{2} \bigg)
\end{align*}
and 
\begin{align*}
 \mu' & = (a_1, \dots, a_x, 0, \dots, 0, d_1, \dots, d_w; c_1, \dots, c_z, 0, \dots, 0, b_1, \dots, b_y) \\
 & + \bigg( \frac{p-q}{2}, \dots, \frac{p-q}{2}; \frac{q-p}{2}, \dots, \frac{q-p}{2} \bigg) + \bigg( \frac{n_0}{2}, \dots, \frac{n_0}{2} \bigg),
\end{align*}
where
\begin{itemize}
\item $a_i, b_j, c_k, d_l \in \Z$;
\item $a_1 \ge \dots \ge a_x > 0 > b_1 \ge \dots \ge b_y$ and $c_1 \ge \dots \ge c_z > 0 > d_1 \ge \dots \ge d_w$;
\item $x+y \le p$ and $z+w \le q$;
\item $x+w \le r$ and $z+y \le s$.
\end{itemize}
\end{lem}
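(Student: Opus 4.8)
The plan is to recover this from the classical theory of the oscillator representation restricted to a maximal compact subgroup, essentially as in Howe \cite{howe2} and Paul \cite{paul1}; the real work is in matching it to the normalizations used here.

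First I would realize $\omega_{V,W,\chi_V,\chi_W,\psi}$ on the Fock model $\mathcal{P}$ and describe $\mathcal{P}$ explicitly as a module for $K \times K' = (\U(p)\times\U(q))\times(\U(r)\times\U(s))$: it is the polynomial algebra on a sum of four ``matrix'' spaces obtained by pairing the $\pm$-parts of $W$ with the $\pm$-parts of $V$, the degree grading being total polynomial degree. Here the choice $\psi(x) = e^{-2\pi\sqrt{-1}x}$ together with the sign $\zeta = \sqrt{-1}$ on the skew-Hermitian side is what fixes which of the two mixed pairings carries the holomorphic variables, and this is exactly why that choice is imposed. I would then decompose $\mathcal{P}$ into $K\times K'$-irreducibles using $(\GL,\GL)$-duality (the Cauchy identity) on each of the four factors, followed by the Littlewood--Richardson rule to regroup the two factors on which each of $\U(p),\U(q),\U(r),\U(s)$ acts.

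Next I would invoke Howe's structure theorem for the joint harmonics: $\mathcal{H}$ is a multiplicity-free $K\times K'$-submodule of $\mathcal{P}$, equal to the span of the joint vectors killed by the positive-degree constant-coefficient invariant differential operators for both members of the pair, and it realizes the correspondence $\mu \leftrightarrow \mu'$ in question. Extracting the ``extreme'' components from the decomposition above gives the block shape: from the two $W^+$-factors one reads off a string $(a_i)$ of positive parts and a string $(b_j)$ of negative parts of the $\U(p)$-weight, with zeros in between, and dually $(c_k),(d_l)$ from the $W^-$-factors; reading the same data off the $V$-side regroups the parts so that $(a_i)$ and $(d_l)$ become the $\U(r)$-parts of $\mu'$ while $(c_k),(b_j)$ become the $\U(s)$-parts — this is precisely the crossing $b \leftrightarrow d$ between the two displayed formulas.

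Finally I would account for the shifts and the inequalities. The $(\tfrac{m_0}{2},\dots)$-shift on $\mu$ comes from the twist by $\chi_V\circ\det$ built into the Weil representation on the $\U(W)$-side, and the $(\tfrac{n_0}{2},\dots)$-shift on $\mu'$ from $\chi_W$ on the $\U(V)$-side; the half-integral shifts $(\tfrac{r-s}{2},\dots;\tfrac{s-r}{2},\dots)$ and $(\tfrac{p-q}{2},\dots;\tfrac{q-p}{2},\dots)$ are the determinant-type twists forced by the metaplectic normalization of the oscillator action of $K$ and $K'$ (the $\det^{1/2}$ on the antiholomorphic variables). The bounds $x+y\le p$ and $z+w\le q$ hold because the $\U(p)$- and $\U(q)$-weights have only $p$ and $q$ coordinates, and $x+w\le r$, $z+y\le s$ because on the $V$-side $\U(r)$ acquires the $x$ positive coordinates $(a_i)$ and the $w$ negative ones $(d_l)$, and dually for $\U(s)$. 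I expect the main obstacle to be exactly this bookkeeping — keeping the $\chi_V,\chi_W$-twists, the metaplectic half-determinant, the $\zeta=\sqrt{-1}$ convention, and the $b\leftrightarrow d$ crossing simultaneously consistent — rather than any conceptual difficulty; once the Fock model is set up as above the lemma is a direct computation and is in substance a special case of the descriptions in \cite{howe2, paul1}.
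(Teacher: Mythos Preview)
Your plan is correct and is in substance what the paper does: the paper simply cites \cite[Theorem 5.4]{konno} for the explicit description of the joint harmonics and then records the convention adjustment needed to match it to the present setup (taking the pair $\underline{\xi} = (\chi_W,\chi_V^{-1})$ in Konno's normalization and composing with $g \mapsto {}^t g^{-1}$ on $\U(p,q)$ to switch left and right actions). Your direct derivation via the Fock model, $(\GL,\GL)$-duality, and Howe's joint-harmonic structure is exactly the computation underlying that cited theorem, and you have correctly isolated the bookkeeping of the $\chi_V,\chi_W$ and half-determinant twists as the only nontrivial point.
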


\begin{proof}
Given our choice of the data $(\chi_V, \chi_W, \psi)$, the assertion follows from \cite[Theorem 5.4]{konno}.
We remark that the convention in \cite{konno} is different from ours (see \cite[Lemma 3.1]{konno} and \cite[p.~758]{gi2}).
In particular, to switch the left and right actions of $\U(W)$ on $W$, we need to compose the Weil representation $\omega_{V,W, \underline{\xi}}$ as in \cite[\S 3.3]{konno} relative to the pair $\underline{\xi} = (\chi_W, \chi_V^{-1})$ with the automorphism $g \mapsto {}^t g^{-1}$ of $\U(p,q)$.
\end{proof}

Let $\pi$ be an irreducible representation of $\U(W)$ such that the theta lift $\theta_{V,W,\chi_V,\chi_W,\psi}(\pi)$ to $\U(V)$ is nonzero.
Let $\mu$ be a $K$-type of $\pi$, i.e.~an irreducible representation of $K$ occurring in $\pi|_K$.
We say that $\mu$ is of minimal $(r,s)$-degree in $\pi$ if the $(r,s)$-degree of $\mu$ is minimal among all $K$-types of $\pi$, in which case $\mu$ occurs in $\mathcal{H}$.

\begin{lem}
\label{l:lkt->mindeg}
Let $\pi$ be a discrete series representation of $\U(W)$ satisfying the assumption of Lemma \ref{l:base}.
Let $\mu$ be the lowest $K$-type of $\pi$.
Then $\mu$ is of minimal $(r,s)$-degree in $\pi$.
\end{lem}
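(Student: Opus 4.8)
The plan is to compute the lowest $K$-type $\mu$ of $\pi$ explicitly, verify that it lies in the space $\mathcal{H}$ of joint harmonics by matching it with the shape in Lemma~\ref{l:K-type-corresp}, and then show that its $(r,s)$-degree is no larger than that of any other $K$-type of $\pi$. For the first point, recall that if $\Psi$ denotes the positive system of $\Delta$ with $\Delta_c^+ \subset \Psi$ and $\langle \alpha, \lambda \rangle > 0$ for all $\alpha \in \Psi$, then the lowest $K$-type of the discrete series $\pi$ with Harish-Chandra parameter $\lambda$ has highest weight $\mu = \lambda + \rho_n(\Psi) - \rho_c$, where $\rho_n(\Psi)$ is half the sum of the noncompact roots in $\Psi$ and $\rho_c$ is half the sum of $\Delta_c^+$. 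The noncompact roots in $\Psi$ are exactly the $\varepsilon_i - \varepsilon_j$ with exactly one index $\le p$ and $\lambda_i > \lambda_j$, so each coordinate of $\rho_n(\Psi) - \rho_c$ is given by an explicit count.

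Carrying this out with $\lambda$ decomposed into the blocks $\alpha_1, \dots, \alpha_x$, $\beta_1, \dots, \beta_y$, $\gamma_1, \dots, \gamma_z$, $\delta_1, \dots, \delta_w$ and the string $\tfrac{k_\lambda-1}{2}, \dots, -\tfrac{k_\lambda-1}{2}$ as in \S\ref{ss:nonvanishing}---and using that $\alpha_i, \gamma_j > \tfrac{k_\lambda-1}{2}$ and $\beta_i, \delta_j < -\tfrac{k_\lambda-1}{2}$, which follows from the distinctness of the $\lambda_{0,i}$ together with the maximality of $k_\lambda$---one finds that the coordinates of $\mu$ coming from the $k_\lambda$-string collapse to a single constant block and that $\mu$ takes exactly the form appearing in Lemma~\ref{l:K-type-corresp}, with the same parameters $x, y, z, w$ as in the decomposition of $\lambda$. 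The strict signs $a_x > 0 > b_1$ and $c_z > 0 > d_1$ then hold, and the four inequalities required by Lemma~\ref{l:K-type-corresp} follow: $x + y \le p$ and $z + w \le q$ because $x + y + k_\lambda = p$ (or $z + w + k_\lambda = q$) with $k_\lambda \ge 0$, and $x + w \le r$, $z + y \le s$ because $(r,s) = (r_\lambda + l, s_\lambda + l) = (x+w+l, z+y+l)$ with $l \ge \tfrac{k_\lambda}{2} \ge 0$. Hence $\mu$ occurs in $\mathcal{H}$.

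To prove minimality I would use the description of the Fock model via $(\GL,\GL)$-duality (\cite[Theorem 5.4]{konno}, \cite[Lemma 1.4.5]{paul1}): since $\theta_{V,W,\chi_V,\chi_W,\psi}(\pi) \ne 0$, the representation $\pi$ is a quotient of $\omega|_{\U(W)}$, so every $K$-type of $\pi$ occurs in $\omega|_K = \mathcal{P}$; and for any $K$-type $\nu$ of $\pi$, writing $(A_1, \dots, A_p; C_1, \dots, C_q)$ for $\nu$ minus the shifts $(\tfrac{r-s}{2}, \dots; \tfrac{s-r}{2}, \dots)$ and $(\tfrac{m_0}{2}, \dots)$, one has $\deg_{(r,s)}(\nu) \ge \sum_i |A_i| + \sum_j |C_j| =: f(\nu)$, with equality when there is enough room in the Fock model (at most $r$, resp.\ $s$, positive, resp.\ negative, entries among the $A_i$, and likewise for the $C_j$)---which holds for $\mu$ by the inequalities above, so $\deg_{(r,s)}(\mu) = f(\mu)$. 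Now $f$ is convex and $W_K$-invariant, and by Blattner's formula the highest weight of any $K$-type $\mu''$ of $\pi$ lies in $w(\mu + \rho_c + \sum_\beta \Z_{\ge 0}\beta) - \rho_c$ for some $w \in W_K$, where $\beta$ runs over the noncompact roots in $\Psi$; by $W_K$-invariance of $f$ this gives $f(\mu'') = f(\mu + \zeta)$ for some $\zeta$ in the cone $\sum_{\alpha \in \Psi} \Z_{\ge 0}\alpha$. Choosing the subgradient $g$ of $f$ at $\mu$ with $g_i = \sgn(A_i)$ on the nonzero blocks and $g_i = 0$ on the zero blocks---so that $g$ is weakly decreasing on the $\U(p)$- and on the $\U(q)$-coordinates---one checks $\langle g, \alpha \rangle \ge 0$ for every $\alpha \in \Psi$: for compact $\alpha$ this is the monotonicity of $g$, and for a noncompact $\alpha = \varepsilon_a - \varepsilon_b$ with $\lambda_a > \lambda_b$ it is a short case analysis showing $g_a \ge g_b$. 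The subgradient inequality then gives $f(\mu + \zeta) \ge f(\mu)$, hence $\deg_{(r,s)}(\mu'') \ge f(\mu'') \ge f(\mu) = \deg_{(r,s)}(\mu)$ for every $K$-type $\mu''$ of $\pi$, so $\mu$ is of minimal $(r,s)$-degree.

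The step I expect to be the main obstacle is the bookkeeping behind the first two paragraphs: putting the lowest $K$-type $\mu$ into the precise normal form of Lemma~\ref{l:K-type-corresp} requires tracking carefully how the $k_\lambda$-string interacts with the shift $\rho_n(\Psi) - \rho_c$, and it is exactly here that the hypothesis $l \ge \tfrac{k_\lambda}{2}$ (equivalently, the nonvanishing of $\theta_{V,W,\chi_V,\chi_W,\psi}(\pi)$) and the sharp separation between the $\alpha,\gamma$-blocks, the string, and the $\beta,\delta$-blocks are used; an off-by-one slip there would break the identification. Once the correct $(r,s)$-degree formula is available, the convexity argument in the third paragraph is essentially formal.
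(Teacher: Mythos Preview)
Your approach is correct in outline but takes a substantially different and longer route than the paper. The paper's proof is three lines: since the $(r,s)$-degree of any $K$-type depends only on $r-s$ (this is \cite[Lemma~1.4.5]{paul1}), and since the hypothesis $(r,s)=(r_\lambda+l,\,s_\lambda+l)$ forces $r-s = r_\lambda - s_\lambda$, one may replace $(r,s)$ by $(r_0,s_0) = (r_\lambda + \lfloor k_\lambda/2 \rfloor,\, s_\lambda + \lfloor k_\lambda/2 \rfloor)$, which satisfies $r_0+s_0 \in \{n,n-1\}$. In this (almost) equal-rank range the theta lift to $\U(r_0,s_0)$ is nonzero by Theorem~\ref{t:nonvanishing}, and the results of Paul \cite[Proposition~0.5]{paul1}, \cite[Proposition~1.4]{paul2} then say directly that the lowest $K$-type has minimal $(r_0,s_0)$-degree, hence minimal $(r,s)$-degree. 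No explicit computation of $\mu$ and no convexity argument is needed.

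Your direct argument---compute $\mu$, place it in the joint harmonics, then run a subgradient estimate over the cone of positive roots---would also work, with two caveats. First, the claim that $\mu$ matches Lemma~\ref{l:K-type-corresp} ``with the same parameters $x,y,z,w$ as in the decomposition of $\lambda$'' is not literally correct: some of the resulting $a_i$ or $c_k$ can vanish (for instance when $\alpha_x = \tfrac{k_\lambda+1}{2}$ and no $\gamma_k$ lies below it), and these must be absorbed into the zero block, shrinking $x$ or $z$. This is harmless for the conclusion but breaks the stated identification of parameters. Second, the inequality $\deg_{(r,s)}(\nu) \ge f(\nu)$ for an arbitrary $K$-type $\nu$ in $\mathcal{P}$ is not stated in the form you need in the references you cite; it can be extracted from the fourfold $(\GL,\GL)$-decomposition of the Fock model, but this is one more layer of bookkeeping. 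The paper's reduction to Paul's equal-rank results sidesteps both issues entirely.
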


\begin{proof}
Put $(r_0, s_0) = (r_\lambda + [\frac{k_\lambda}{2}], s_\lambda + [\frac{k_\lambda}{2}])$, so that $r_0+s_0 = n$ or $n-1$.
Let $V_0$ be the Hermitian space over $\C$ of signature $(r_0, s_0)$.
Then by Theorem \ref{t:nonvanishing}, the theta lift $\theta_{V_0,W,\chi_V,\chi_W,\psi}(\pi)$ to $\U(V_0)$ is nonzero.
Moreover, by \cite[Proposition 0.5]{paul1} and \cite[Proposition 1.4]{paul2}, $\mu$ is of minimal $(r_0,s_0)$-degree in $\pi$.
On the other hand, for any $K$-type $\nu$ of $\pi$, the $(r_0,s_0)$-degree of $\nu$ agrees with the $(r,s)$-degree of $\nu$.
Hence $\mu$ is of minimal $(r,s)$-degree in $\pi$.
\end{proof}

We also need a seesaw diagram
\[
\begin{tikzcd}
 \U(W) \times \U(W) \arrow[rd,dash] & \U(V) \arrow[ld,dash] \\
 \U(W) \arrow[u,dash] & \U(V_1) \times \U(V_2) \arrow[u,dash]
\end{tikzcd},
\]
where $V_1$ and $V_2$ are the Hermitian spaces over $\C$ of signatures $(r,0)$ and $(0,s)$, respectively, such that $V = V_1 \oplus V_2$ and $K' = \U(V_1) \times \U(V_2)$.
Consider the symplectic spaces
\[
 \W = V \otimes_E W, \quad 
 \W_1 = V_1 \otimes_E W, \quad
 \W_2 = V_2 \otimes_E W
\]
over $F$, so that $\W = \W_1 \oplus \W_2$.
We may take complete polarizations
\[
 \W = \X \oplus \Y, \quad
 \W_1 = \X_1 \oplus \Y, \quad
 \W_2 = \X_2 \oplus \Y
\]
such that $\X = \X_1 \oplus \X_2$ and $\Y = \Y_1 \oplus \Y_2$.
Write 
\[
 \omega = \omega_{V,W,\chi_V,\chi_W,\psi}, \quad
 \omega_1 = \omega_{V_1,W,\chi_{V_1},\chi_W,\psi}, \quad
 \omega_2 = \omega_{V_2,W,\chi_{V_2},\chi_W,\psi},
\]
where $\chi_{V_1}, \chi_{V_2}$ are characters of $\C^\times$ given by 
\[
 \chi_{V_1}(z) = \left( \frac{z}{\sqrt{z \bar{z}}} \right)^{m_1}, \quad
 \chi_{V_2}(z) = \left( \frac{z}{\sqrt{z \bar{z}}} \right)^{m_2} 
\]
with some integers $m_1, m_2$ such that
\[
 m_1 \equiv r \bmod 2, \quad
 m_2 \equiv s \bmod 2, \quad
 m_1 + m_2 = m_0.
\]
Then we have an identification
\[
 (\omega, S(\X)) = (\omega_1 \boxtimes \omega_2, S(\X_1) \otimes S(\X_2))
\]
as representations of $\U(W) \times \U(V_1) \times \U(V_2)$.

We now prove Lemma \ref{l:base}.
Let $\pi$ be a discrete series representation of $\U(W)$ satisfying the assumption of Lemma \ref{l:base}.
Let $\mu$ be the lowest $K$-type of $\pi$.
By Lemma \ref{l:lkt->mindeg}, we may write
\begin{align*}
 \mu & = (a_1, \dots, a_x, 0, \dots, 0, b_1, \dots, b_y; c_1, \dots, c_z, 0, \dots, 0, d_1, \dots, d_w) \\
 & + \bigg( \frac{r-s}{2}, \dots, \frac{r-s}{2}; \frac{s-r}{2}, \dots, \frac{s-r}{2} \bigg) + \bigg( \frac{m_0}{2}, \dots, \frac{m_0}{2} \bigg)
\end{align*}
as in Lemma \ref{l:K-type-corresp}.
Put
\begin{align*}
 \mu_1 & = (a_1, \dots, a_x, 0, \dots, 0; 0, \dots, 0, d_1, \dots, d_w)
 + \bigg( \frac{r}{2}, \dots, \frac{r}{2}; -\frac{r}{2}, \dots, -\frac{r}{2} \bigg) + \bigg( \frac{m_1}{2}, \dots, \frac{m_1}{2} \bigg), \\ 
 \mu_2 & = (0, \dots, 0, b_1, \dots, b_y; c_1, \dots, c_z, 0, \dots, 0)
 + \bigg( {-\frac{s}{2}}, \dots, -\frac{s}{2}; \frac{s}{2}, \dots, \frac{s}{2} \bigg) + \bigg( \frac{m_2}{2}, \dots, \frac{m_2}{2} \bigg),
\end{align*}
so that the tensor product representation $\mu_1 \otimes \mu_2$ contains $\mu$.
Let $\mu'$ be the irreducible representation of $K'$ corresponding to $\mu$.
Let $\mu_1'$ and $\mu_2'$ be the irreducible representations of $\U(V_1)$ and $\U(V_2)$, respectively, given by 
\begin{align*}
 \mu_1' & = (a_1, \dots, a_x, 0, \dots, 0, d_1, \dots, d_w)
 + \bigg( \frac{p-q}{2}, \dots, \frac{p-q}{2} \bigg) + \bigg( \frac{n_0}{2}, \dots, \frac{n_0}{2} \bigg), \\
 \mu_2' & = (c_1, \dots, c_z, 0, \dots, 0, b_1, \dots, b_y)
 + \bigg( \frac{q-p}{2}, \dots, \frac{q-p}{2} \bigg) + \bigg( \frac{n_0}{2}, \dots, \frac{n_0}{2} \bigg),
\end{align*}
so that $\mu' = \mu_1' \boxtimes \mu_2'$.
Then the theta lift $\pi_i = \theta_{V_i,W,\chi_{V_i},\chi_W, \psi}(\mu_i')$ to $\U(W)$ is nonzero.
In fact, $\pi_i$ is the unitary highest weight module with lowest $K$-type $\mu_i$ (see \cite{kashiwara-vergne}).
Since $\U(V_i)$ is compact, we may realize the representation $\pi_i \boxtimes \mu_i'$ of $\U(W) \times \U(V_i)$ on the $\mu_i'$-isotypic component $S(\X_i)_{\mu_i'}$ of $S(\X_i)$.
In particular, for $\varphi_{1,i}, \varphi_{2,i} \in S(\X_i)_{\mu_i'}$, the function 
\[
 g \longmapsto (\omega_i(g) \varphi_{1,i}, \varphi_{2,i})
\]
is a matrix coefficient of $\pi_i$.

Thus it remains to show that the integral
\begin{equation}
\label{eq:matcoeff-integral}
 \int_{\U(W)} \Psi_1(g) \Psi_2(g) \overline{\Psi(g)} \, dg 
\end{equation}
is nonzero for some linear combinations $\Psi_1, \Psi_2, \Psi$ of matrix coefficients of $\pi_1, \pi_2, \pi$, respectively.
Indeed, the integral \eqref{eq:matcoeff-integral} is a linear combination of integrals of the form 
\[
 \int_{\U(W)} (\omega(g) \varphi_1, \varphi_2) \overline{(\pi(g) f_1, f_2)} \, dg,
\]
where $\varphi_1 = \varphi_{1,1} \otimes \varphi_{1,2}, \varphi_2 = \varphi_{2,1} \otimes \varphi_{2,2} \in S(\X)$ with $\varphi_{1,i}, \varphi_{2,i} \in S(\X_i)_{\mu_i'}$ and $f_1, f_2 \in \pi$.
We take the Flensted-Jensen function $\Psi$ given by 
\[
 \Psi(g) = \frac{1}{\dim \mu} \cdot \Tr(P_\mu \pi(g) P_\mu),
\]
where $P_\mu$ is the orthogonal projection to the $\mu$-isotypic component of $\pi$ (see \cite[\S 7]{flensted-jensen}).
Similarly, we take the function $\Psi_i$ given by 
\[
 \Psi_i(g) = \frac{1}{\dim \mu_i} \cdot \Tr(P_{\mu_i} \pi_i(g) P_{\mu_i}).
\]
Then it follows from the proof of \cite[Theorem 4.1]{li90} that the integral \eqref{eq:matcoeff-integral} is nonzero.
This completes the proof of Lemma \ref{l:base} and hence of Proposition \ref{p:key}.

This also completes the proof of Theorem \ref{t:main}.

\end{document}